 \newcommand\sn[1]{{\color{red} #1}}
  \newcommand\ma[1]{{\color{black} #1}}
\let\oldtocsection=\tocsection
\let\oldtocsubsection=\tocsubsection 
\let\oldtocsubsubsection=\tocsubsubsection
\renewcommand{\tocsection}[2]{\vspace{0.5em}\hspace{0em}\oldtocsection{#1}{#2}}
\renewcommand{\tocsubsection}[2]{\vspace{0.5em}\hspace{1em}\oldtocsubsection{#1}{#2}}
\renewcommand{\tocsubsubsection}[2]{\vspace{0.5em}\hspace{2em}\oldtocsubsubsection{#1}{#2}}
\let\originallesssim\lesssim
\DeclareRobustCommand{\lesssim}{%
  \mathrel{\mathpalette\lowersim\originallesssim}%
}
\newtheorem{theoreme}{Theorem}[section]
\newtheorem{pro}[theoreme]{Proposition}
\newtheorem{lemma}[theoreme]{Lemma}
\newtheorem{definition}[theoreme]{Definition}
\theoremstyle{definition}
\numberwithin{equation}{section}
 \renewenvironment{proof}{{\bfseries \noindent Proof.}}{\demo}
\newcommand\xqed[1]{%
  \leavevmode\unskip\penalty9999 \hbox{}\nobreak\hfill
  \quad\hbox{#1}}
\newcommand\demo{\xqed{$\square$}}
\def\u2{\u^2}
\def\u3{\u^3}
\def\u4{\u^4}
\def\u5{\u^5}
\def\y1{\y^1}
\def\y2{\y^2}
\def\y3{\y^3}
\def\y4{\y^4}
\def\y5{\y^5}
\def\R{\mathbb R}
\def\HH{\mathcal H}
\def\AA{\mathcal A}
\def\la {{\lambda}}
\newcommand {\nc}   {\newcommand}
\nc {\be}   {\begin{equation}} \nc {\ee}   {\end{equation}} \nc
\nc {\eeq}  {\end{eqnarray}} \nc {\beqs}
\nc {\eeqs} {\end{eqnarray*}}
\def\edc{\end{document}}
\providecommand{\abs}[1]{\lvert#1\rvert}%absolute value
\DeclareMathOperator{\divv}{div}  
\begin{document}
\title[\fontsize{7}{9}\selectfont  ]{Stabilization  of  coupled wave equations with viscous damping on cylindrical and  non-regular domains:  Cases without the geometric control condition}
\author{Mohammad Akil$^{1}$, Haidar Badawi$^{1}$, Serge Nicaise$^{1}$ and Virginie R\'egnier$^{1}$ }
%\author{Haidar Badawi$^{1,2}$}
%\author{Serge Nicaise$^{1}$}
\address{$^1$Universit\'e Polytechnique  Hauts-de-France, C\'ERAMATHS/DEMAV, Le Mont Houy 59313 Valenciennes Cedex 9-France}
%\address{$^2$ Aix-Marseille University, I2M, Marseille-France.}
\email{mohammad.akil@uphf.fr, haidar.badawi@etu.uphf.fr, serge.nicaise@uphf.fr, virginie.regnier@uphf.fr}

\keywords{coupled wave equations; viscous damping; $C_0$-semigroup; polynomial stability; cylindrical domains.}
%\date{}
%%%%%%%%%%%%%%%%%%%%%%%%%%%%%%%%%%%%%%%%%%%%%%%%%%%%%%%%%%%%
%Keyword
%%%%%%%%%%%%%%%%%%%%%%%%%%%%%%%%%%%%%%%%%%%%%%%%%%%%%%%%%%%%
%\keywords{}
%%%%%%%%%%%%%%%%%%%%%%%%%%%%%%%%%%%%%%%%%%%%%%%%%%%%%%%%%%%%
%Abstract
%%%%%%%%%%%%%%%%%%%%%%%%%%%%%%%%%%%%%%%%%%%%%%%%%%%%%%%%%%%%

%\pagenumbering{roman}
%\maketitle
%\tableofcontents
%\clearpage
%\pagenumbering{arabic}
%\setcounter{page}{1}
%\setcounter{equation}[section]
\setcounter{equation}{0}
%\abstractname{.}
\begin{abstract}
In this paper, we investigate the direct and indirect stability of  locally coupled wave equations with local viscous damping on cylindrical and non-regular domains without any geometric control condition.  If only one equation is  damped, we prove that the energy of our system decays polynomially with the rate $t^{-\frac{1}{2}}$ if the two waves have the same speed of propagation, and with rate $t^{-\frac{1}{3}}$ if the two waves do not propagate at the same speed. Otherwise, in case of two damped equations, we prove a polynomial energy decay rate of order $t^{-1}$.
\end{abstract}
\maketitle
\pagenumbering{roman}
\maketitle
\tableofcontents
%\clearpage
\pagenumbering{arabic}
\setcounter{page}{1}
%\newpage
\section{Introduction} 
\noindent 
%In this paper, we consider the stabilization of a locally coupled wave equations on a non regular domain via localized viscous damping by considering the case where the coupling region is included in the damping region. 
\begin{comment}
\begin{definition}
We say that a subset $\mathcal{O}$ of $\Omega$ satisfies the ${\rm (GCC)}$ if every ray of the geometrical optics starting at any point $x\in \Omega$ at $t=0$ enters the region $\mathcal{O}$ in finite time $T$.
\end{definition}
\end{comment}
\begin{comment}
\begin{definition}\label{PMGC}
A subset $\mathcal{O}$ satisfies the \textbf{Piecewise Multiplier Geometric Condition} (PMGC in short) if there exist: 
\begin{enumerate}
\item[$\bullet$] $\Omega_j\subset \Omega$ having Lipschitz boundary $\Gamma_j$.
\item[$\bullet$] $x_j\in \R^N$,\ $j=1,\cdots M$.   
\end{enumerate}
such that 
\begin{enumerate}
\item $\Omega_j\cap \Omega_i=\emptyset$ for $j\neq i$.
\item $\mathcal{O}$ contains a neighborhood in $\Omega$ of the set 
$$
\bigcup_{j=1}^{M}\gamma_j(x_j)\cup \left(\Omega\backslash \bigcup_{j=1}^M\Omega_j\right)
$$
\end{enumerate}
where $\gamma_j(x_j)=\left\{x\in \Gamma_j;\ (x-x_j)\cdot \nu_j(x)>0\right\}$ and $\nu_j$ is the outward unit normal vector to $\Gamma_j$.
\end{definition}
\end{comment}
Let $\omega$ be a non empty open set of $\R^{N-1}$ with a Lipschitz boundary, with $N\geq 2$,
and consider the cylindrical domain $\Omega=(0,1)\times \omega$. 
In this domain,  we consider the following    strongly  coupled  wave equations 
\begin{equation}\label{SYST1}
\left\{\begin{array}{lll}
\phi_{tt}(X,t)-\Delta \phi (X,t)+b(x)\phi_t (X,t)+c(x)\psi_t (X,t)&=&0,\ (X,t) \in  \Omega\times (0,\infty),\\[0.1in]
\psi_{tt}(X,t)-a\Delta \psi (X,t)+d(x)\psi_t (X,t)-c(x)\phi_t(X,t)&=&0,\ (X,t) \in  \Omega\times (0,\infty),\\[0.1in]
\phi(X,t)=\psi (X,t) &=&0,\ (X,t) \in   \partial \Omega \times (0,\infty),\\[0.1in]
(\phi(X,0),\phi_t(X,0))&=&(\phi_0(X),\phi_1(X)),\ X \in  \Omega,\\[0.1in] 
(\psi(X,0),\psi_t(X,0))&=&(\psi_0(X),\psi_1(X)),\ X \in  \Omega.
\end{array}
\right.
\end{equation}
where $a$ is a positive real number,  $b,c,d\in L^{\infty}\left(0,1\right),$ {\color{black} such that 
\begin{equation}\tag{${\rm b,c}$}\label{b,c}
	\left\{\begin{array}{l}
		b(x)\geq b_0>0\ \ \text{in}\ \ (\alpha_1,\alpha_4)
		\ \
		\text{and}\ \
		b(x) \geq 0\ \ \text{in}\ \ (0,1)\backslash (\alpha_1,\alpha_4),
		\\[0.1in]
		|c(x)|\geq c_0> 0\ \ \text{in}\ \ (\alpha_2,\alpha_3)  \ \
		\text{and}\ \ c(x)= 0\ \text{in}\ (0,1)\backslash (\alpha_2,\alpha_3), 
	\end{array}
	\right.
\end{equation}
where $0\leq \alpha_1<\alpha_2<\alpha_3<\alpha_4\leq 1$,}
 and  as usual $\partial \Omega$ is the boundary of $\Omega$. In the whole paper, $X=(x,x_1,\cdots,x_{N-1})$ is the generic variable in $\Omega $, where the first variable $x$ runs in $(0,1)$. In this paper, we study the indirect or direct stability of system \eqref{SYST1} by assuming  that
{\color{black}
\begin{equation}\tag{${\rm LCD1}$}\label{CASE1}
\eqref{b,c} \ \text{holds} \ \ \text{and} \ \ d=0 \ \ \text{in} \ \ (0,1),
\end{equation}
or 
\begin{equation}\tag{${\rm LCD2}$}\label{CASE2}
\eqref{b,c} \ \text{holds}, \ \ \text{and} \ \   d(x)\geq d_0>0\ \ \text{in}\ \ (\alpha_1,\alpha_4)
\ \
\text{and}\ \
d(x) \geq 0\ \ \text{in}\ \ (0,1)\backslash (\alpha_1,\alpha_4).
\end{equation}
 The case of global interior dampings (i.e. the case when  $b$ and/or $d$  uniformly positive in $(0,1)$) is of course allowed, but our main interest concerns local dampings, corresponding to the case when $b$ and $d$ are zero on different open subsets of $(0,1)$.
}

\noindent In 2005, Liu and Rao in \cite{RaoLiu01} have studied the stability of the wave equation on a square $(0,\pi)^2$ with local viscous damping, by considering the following damping  region  
$$
\{(x,y)\in (0,\pi)^2  \ | \ a<x<b \ \ \text{and} \ \ 0<y<\pi \},
$$
where $a$ and $b$ are  real numbers such that $0\leq a<b\leq \pi$ and $b-a<\pi$. They established a polynomial energy decay rate of order $t^{-1}$. In 2017, Stahn in \cite{Stahn2017} has studied the stability of the wave equation on a square $(0,1)^2$ with local viscous damping, by considering the following damping  region 
$$
\{(x,y)\in (0,1)^2  \ | \ 0<x<\sigma \ \ \text{and} \ \ 0<y<1 \},
$$
where $\sigma$ is a positive real number. He established an optimal polynomial energy decay rate of order $t^{-\frac{4}{3}}$. In 2019, Batty {\it et al.} in \cite{BPS} have  studied the stability of the wave-heat system on a rectangular domain. They established an optimal polynomial energy decay rate of order $t^{-\frac{2}{3}}$. In 2021, Yu and Han in \cite{doi:10.1137/20M1332499} have  studied the stability of the wave equation on a cuboidal domain via Kelvin-Voigt damping. They established an optimal energy decay rate of order $t^{-\frac{1}{3}}$. In these last three papers, the main ingredient is the separation of variables.
 In 2020, Hayek {\it et al.} in \cite{Hayek} have  studied the stability of  weakly coupled wave equations with Kelvin-Voigt damping on a square. They established, by taking the result in \cite{RaoLiu01} (resp. \cite{Stahn2017}) as an auxiliary problem, a polynomial energy decay rate of order $t^{-\frac{1}{6}}$ (resp. $t^{-\frac{1}{5}}$). However,  in 2021,  Akil {\it et al.} in \cite{akil2021ndimensional} have  studied the stability of  strongly coupled wave equations with Kelvin-Voigt damping on a square. They established by taking the result in \cite{RaoLiu01} (resp. \cite{Stahn2017}) as an auxiliary problem, a polynomial energy decay rate of order $t^{-\frac{1}{5}}$ (resp. $t^{-\frac{1}{4}}$). In  2019, Kassem {\it et al. } in \cite{kassem} have  studied the local indirect stabilization of multidimensional coupled wave equations under geometric conditions by considering system \eqref{SYST1} where $\Omega$ is a nonempty open set of $\R^N$ with a boundary $\Gamma$ of class $C^2$  and 
\begin{equation}
	\left\{\begin{array}{l}
	d(x)=0 \ \ \text{in} \ \ \Omega,\ \ b,c\in W^{1,\infty}\left(\Omega\right);\ \    \text{and}\\[0.1in]
	b(x)\geq 0\ \ \text{in}\ \ \Omega, \quad
		b(x)>0\ \ \text{in}\ \ \omega_b \subset \Omega, \ \ \text{and}\ \ c(x)\neq 0\ \ \text{in}\ \ \omega_c \subset \Omega, \ \ \text{where}\\[0.1in]
	\omega_b \cap \omega_c :=\omega_{b,c} \neq \emptyset,
	\end{array}
	\right.
\end{equation}
such that $\omega_{b,c}$ satisfies the piecewise multiplier geometric condition  (PMGC in short) introduced by K. Liu in \cite{liu-PMGC}. They established an exponential energy decay rate if the two waves have the same speed of propagation (i.e. $a=1$). In case of different speed propagation (i.e. $a\neq 1$), they obtained an optimal polynomial energy decay rate of order $t^{-1}$. While  in \cite{kassem2}, the authors have  studied the local indirect stabilization of multidimensional coupled wave equations under geometric conditions, by considering system \eqref{SYST1} where $\Omega$ is a nonempty open set of $\R^N$ with a boundary $\Gamma$ of class $C^2$  and 
\begin{equation}
	\left\{\begin{array}{l}
		d(x)=0 \ \ \text{in} \ \ \Omega,\ \ b,c\in W^{1,\infty}\left(\Omega\right); \text{and}\\[0.1in]
 b(x)\geq 0\ \ \text{in}\ \ \Omega,\ \		b(x)>0\ \ \text{in}\ \ \omega_b \subset \Omega \ \ \text{and}\ \ c(x)\neq 0\ \ \text{in}\ \ \omega_c \subset \omega_b,
	\end{array}
	\right.
\end{equation}
such that $\omega_c$ satisfies the geometric control condition  (GCC in short) introduced by Rauch and Taylor in \cite{RTPGCC} for manifolds without boundaries and by Bardos, Lebeau and Rauch in \cite{BLJ-GCC} for domains with boundaries. They established an exponential energy decay rate if the two waves have the same speed of propagation (i.e. $a=1$).\\ \linebreak
According to the references cited above, it should be noted that,  in many cases, the GCC is an important hypothesis for  coupled wave systems to achieve exponential stabilization.  Hence, this work focuses on the following question: to what extent the strongly coupled wave equations on a general  non-regular domain can be stabilized  under viscous damping if the support of the damping does not satisfy the GCC. We answer to this question in the case of  cylindrical domains described above by combining an orthogonal basis approach (separation of variables) and a new frequency multiplier method.\\ \linebreak 
This paper is organized as follows: In Subsection \ref{WPP}, we prove the well-posedness of our system by using semigroup approach. In Subsection  \ref{SS-NR}, following a general criteria of Arendt and Batty, we show the strong stability of system \eqref{SYST1}. In Section  \ref{PS-NR}, by combining an orthonormal basis decomposition  with frequency-multiplier techniques, we prove a polynomial energy decay rate of order:
$$
\left\{
\begin{array}{llr}
\displaystyle{t^{-\frac{1}{2}}}&\text{if} & a=1\ \text{and}\ \eqref{CASE1}\ \text{holds},\\[0.1in]
\displaystyle{t^{-\frac{1}{3}}}&\text{if} & a\neq 1\ \text{and}\ \eqref{CASE1}\ \text{holds},\\[0.1in]
\displaystyle{t^{-1}}&\text{if}&\ \eqref{CASE2}\ \text{holds}.\\[0.1in]
\end{array}
\right.
$$

\begin{comment}
 and the coefficients $b$ and $c$ represent,  respectively,  the damping and the coupling functions defined on $\Omega$, such that
\begin{equation}\label{bxcx}
b(x)=\left\{\begin{array}{ccc}
b_0&\text{if}&x\in (\alpha_1,\alpha_4)\\[0.1in]
0&\text{if}&\text{else}
\end{array}
\right.
\quad \text{and}\quad 
c(x)=\left\{\begin{array}{ccc}
c_0&\text{if}&x\in (\alpha_2,\alpha_3)\\[0.1in]
0&\text{if}&\text{else}
\end{array}
\right.
\end{equation}
such that $0<\alpha_1<\alpha_2<\alpha_3<\alpha_4<1$  and $b_0$ anc $c_0$ are two positive constants.\sn{verify if we can take 
$0\leq\alpha_1< \alpha_2<\alpha_3<\alpha_4\leq 1$. Furthermore I think that we can replace \eqref{bxcx} by
assuming  that $c$ and $b$ are in $L^\infty(0,1)$, and 
\begin{equation}\label{bxcxsn}
b(x)\left\{\begin{array}{ccc}
\geq b_0&\text{if}&x\in (\alpha_1,\alpha_4)\\[0.1in]
=0&\text{if}&\text{else}
\end{array}
\right.
\quad \text{and}\quad 
c(x)\left\{\begin{array}{ccc}
\geq c_0&\text{if}&x\in (\alpha_2,\alpha_3)\\[0.1in]
=0&\text{if}&\text{else}
\end{array}
\right.
\end{equation}
} We set  $\omega_b:=(\alpha_1,\alpha_4)\times (0,1)$ and $\omega_c=(\alpha_2,\alpha_3)\times (0,1)$. 
\end{comment}

%%%%%%%%%%%%%%%%%%%%%%%%%%%%%%%%%%%%%%%%%%%%%%%%%%%
%%%%%%%%%Section 2
%%%%%%%%%%%%%%%%%%%%%%%%%%%%%%%%%%%%%%%%%%%%%%%%%%%
\section{Well-Posedness and Strong Stability}\label{WPSS}
\subsection{Well-Posedness}\label{WPP}\noindent In this section, we will establish the well-posedness of system \eqref{SYST1} by using semigroup approach. Here and below, we set 

$$
\omega_b:=(\text{supp}\, b)^\circ \times \omega, \  \omega_c:=(\alpha_2,\alpha_3)\times \omega,\ \text{and}\ 
	\omega_d:=\left\{\begin{array}{lll}
\displaystyle \emptyset & \text{if} & \eqref{CASE1} \ \text{holds}, \vspace{0.15cm}\\
\displaystyle (\text{supp}\, d)^{\circ}\times \omega  & \text{if}& \eqref{CASE2} \ \text{holds}.
	\end{array}\right.
$$
The energy of system  \eqref{SYST1} is given by 
\begin{equation}\label{ENERGY}
E(t)=\frac{1}{2}\int_{\Omega}\left(a\abs{\nabla \phi}^2+\abs{\phi_t}^2+\abs{\nabla \psi}^2+\abs{\psi_t}^2\right)dX.
\end{equation}
A straightforward computation gives 
\begin{equation}\label{DENERGY}
\frac{d}{dt}E(t)=-\int_{\Omega}b\abs{\phi_t}^2dX-\int_{\Omega} d \abs{\psi_t}^2dX\leq 0,
\end{equation}
which indicates that the energy of system \eqref{SYST1} is dissipative. Now, let us define the energy space $\mathcal{H}$ by 
$$
\mathcal{H}=\left(H_0^1(\Omega)\times L^2(\Omega)\right)^2,
$$
that is a Hilbert space, equipped with the inner product defined by 
$$
\left<\Phi,\Phi_1\right>_{\mathcal{H}}=\int_{\Omega}\left(a\nabla\phi\cdot \nabla \overline{\phi}_1+v\overline{v_1}+\nabla\psi\cdot \nabla \overline{\psi}_1+z\overline{z_1}\right)dX,
$$
for all $\Phi=(\phi,v,\psi,z)^\top$ and $\Phi_1=(\phi_1,v_1,\psi_1,z_1)^\top$ in $\mathcal{H}$. The expression $\|\cdot\|_{\mathcal{H}}$ will denote the corresponding norm. We define the unbounded linear operator $\mathcal{A}:D(\mathcal{A})\subset \mathcal{H}\to \mathcal{H}$ by 
\begin{equation}\label{Domain}
D(\mathcal{A})= [D(\Delta_{\rm Dir})\times   H_0^1(\Omega)]^2,
\end{equation}
and 
$$
\mathcal{A}\Phi=\left(v,\Delta \phi-bv-cz,z,a\Delta \psi-dz+cv\right)^\top, \  \forall  \Phi=(\phi,v,\psi,z)^\top\in D(\mathcal{A}),
$$
where $D(\Delta_{\rm Dir}):=\{u\in H_0^1(\Omega)\, :\, \Delta u\in L^2(\Omega)\}.$
\\ \linebreak
If $\Phi=(\phi,\phi_t,\psi,\psi_t)^\top$ is the state of system \eqref{SYST1}, then this system is transformed into the first order evolution equation on the Hilbert space $\mathcal{H}$ given by 
\begin{equation}\label{evolution}
\Phi_t=\mathcal{A}\Phi,\quad \Phi(0)=\Phi_0,
\end{equation}
where $\Phi_0=(\phi_0,\phi_1,\psi_0,\psi_1)^\top$.
\begin{pro}
	{\rm
The unbounded linear operator $\mathcal{A}$ is m-dissipative in the energy space $\mathcal{H}$. }
\end{pro}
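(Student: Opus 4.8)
The plan is to verify m-dissipativity through the Lumer--Phillips theorem, i.e.\ to check that $\mathcal{A}$ is dissipative and that $I-\mathcal{A}$ maps $D(\mathcal{A})$ onto $\mathcal{H}$. For dissipativity I would take $\Phi=(\phi,v,\psi,z)^\top\in D(\mathcal{A})$ and expand $\langle\mathcal{A}\Phi,\Phi\rangle_{\mathcal{H}}$ straight from the definition of the inner product. The two Laplacian terms $\int_\Omega\Delta\phi\,\overline v\,dX$ and $a\int_\Omega\Delta\psi\,\overline z\,dX$ can be integrated by parts via Green's formula (legitimate because every component of $\Phi$ lies in $H_0^1(\Omega)$ while $\Delta\phi,\Delta\psi\in L^2(\Omega)$), after which the conservative contributions pair into complex-conjugate expressions with purely imaginary difference, and the two coupling terms $-\int_\Omega c\,z\overline v\,dX$ and $\int_\Omega c\,v\overline z\,dX$ also cancel in real part since $c$ is real-valued. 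What remains is
$$\operatorname{Re}\langle\mathcal{A}\Phi,\Phi\rangle_{\mathcal{H}}=-\int_\Omega b\,\abs{v}^2\,dX-\int_\Omega d\,\abs{z}^2\,dX\le 0,$$
the sign being guaranteed by $b,d\ge 0$ from \eqref{b,c} and \eqref{CASE1}--\eqref{CASE2}; this is nothing but the dissipation already reflected in \eqref{DENERGY}.

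For maximality I would fix $F=(f_1,f_2,f_3,f_4)^\top\in\mathcal{H}$ and solve $(I-\mathcal{A})\Phi=F$. The first and third components immediately give $v=\phi-f_1$ and $z=\psi-f_3$, both in $H_0^1(\Omega)$; inserting them into the second and fourth components reduces everything to the coupled elliptic system
$$(1+b)\phi-\Delta\phi+c\psi=F_1,\qquad (1+d)\psi-a\Delta\psi-c\phi=F_2,$$
where $F_1=(1+b)f_1+f_2+cf_3\in L^2(\Omega)$ and $F_2=(1+d)f_3+f_4-cf_1\in L^2(\Omega)$. I would treat this weakly on $V:=H_0^1(\Omega)\times H_0^1(\Omega)$ through the sesquilinear form obtained by testing the two equations against $(\tilde\phi,\tilde\psi)$ and summing, and invoke Lax--Milgram to produce a unique $(\phi,\psi)\in V$. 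Boundedness of the form is clear from $b,c,d\in L^\infty(0,1)$, and once $(\phi,\psi)$ is obtained the equations force $\Delta\phi=(1+b)\phi+c\psi-F_1\in L^2(\Omega)$ and $a\Delta\psi=(1+d)\psi-c\phi-F_2\in L^2(\Omega)$, so that $\phi,\psi\in D(\Delta_{\rm Dir})$, hence $\Phi=(\phi,v,\psi,z)^\top\in D(\mathcal{A})$ and $(I-\mathcal{A})\Phi=F$.

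The step that requires the most care is the coercivity of this form, since the coupling enters with opposite signs ($+c\psi$ in the first equation, $-c\phi$ in the second), so one must confirm that the skew coupling does not destroy ellipticity. Testing against $(\phi,\psi)$ itself, the two coupling terms $\int_\Omega c\,\psi\overline\phi\,dX$ and $-\int_\Omega c\,\phi\overline\psi\,dX$ are complex conjugates of one another (again because $c$ is real) and therefore contribute nothing to the real part; coercivity then follows from the surviving diagonal terms, which dominate $\min(1,a)$ times the squared $V$-norm using $b,d\ge 0$ and $a>0$. Having established both dissipativity and surjectivity of $I-\mathcal{A}$, the Lumer--Phillips theorem yields that $\mathcal{A}$ is m-dissipative, completing the proof.
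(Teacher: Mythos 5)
Your proposal is correct, and the dissipativity half coincides with the paper's computation, but the maximality half takes a genuinely different route. The paper does not solve $(I-\mathcal{A})\Phi=F$; it solves $-\mathcal{A}\Phi=F$, i.e.\ it inverts $\mathcal{A}$ at $\lambda=0$. There the first and third components give $v=-f_1$, $z=-f_3$, so when these are substituted into the remaining equations the coupling terms $cz$ and $-cv$ become \emph{data} on the right-hand side, and the resulting elliptic system for $(\phi,\psi)$ is completely decoupled: the sesquilinear form is just $\int_\Omega\nabla\phi\cdot\nabla\overline{\zeta}\,dX+a\int_\Omega\nabla\psi\cdot\nabla\overline{\xi}\,dX$, whose coercivity is immediate. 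Having obtained $0\in\rho(\mathcal{A})$, the paper then uses the openness of the resolvent set (citing Kato) to conclude $R(\lambda I-\mathcal{A})=\mathcal{H}$ for $\lambda>0$ small, which together with dissipativity gives m-dissipativity and, as a by-product, density of $D(\mathcal{A})$. You instead attack $(I-\mathcal{A})\Phi=F$ head-on, which produces a genuinely coupled system with skew coupling $(+c\psi,-c\phi)$; your observation that these terms are complex conjugates of each other and therefore drop out of $\Re\,B((\phi,\psi),(\phi,\psi))$ is exactly the point that makes Lax--Milgram applicable, and your coercivity bound $\min(1,a)$ (using $b,d\ge 0$) is correct. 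The trade-off: your argument is more self-contained (no perturbation/openness step, and it directly verifies the range condition at $\lambda=1$ required by Lumer--Phillips), while the paper's version sidesteps the coupling entirely and simultaneously establishes $0\in\rho(\mathcal{A})$, a fact it reuses in the strong stability proof of Theorem \ref{Strong}. Both arguments are complete and correct.
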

\begin{proof}
For all $\Phi=(\phi,v,\psi,z)^\top\in D(\AA)$, we have 
\begin{equation}\label{diss}
\Re\left<\AA \Phi,\Phi\right>_{\mathcal{H}}=-\int_{\Omega}b\abs{v}^2dX-\int_{\Omega} d \abs{z}^2dX\leq0 ,
\end{equation}
which implies that $\AA$ is dissipative. Now, let us prove that $\AA$ is maximal. For this aim, let $F=(f_1,f_2,f_3,f_4)^\top\in \mathcal{H}$, we look for a unique solution   $\Phi=(\phi,v,\psi,z)^\top\in D(\AA)$   of
\begin{equation}\label{m-diss1}
-\AA \Phi=F.
\end{equation}
Equivalently, we have the following system 
\begin{equation}\label{m-diss2}
-v=f_1\quad \text{and}\quad -z=f_3
\end{equation}
and 
\begin{equation}\label{m-diss3}
-\Delta \phi+bv+cz=f_2\quad \text{and}\quad -a\Delta \psi+dz-cv=f_4.
\end{equation}
Substituting \eqref{m-diss2} in \eqref{m-diss3}, we get 
\begin{eqnarray}
-\Delta \phi=f_2+bf_1+cf_3,\label{m-diss4}\\
-a\Delta \psi=f_4+df_3-cf_1,\label{m-diss5}
\end{eqnarray}
with full Dirichlet boundary conditions 
\begin{equation}\label{1m-diss3}
\phi=\psi=0\quad \text{on}\quad \partial\Omega. 
\end{equation}
Let $(\zeta,\xi)\in H_0^1(\Omega)\times H_0^1(\Omega)$. Multiplying \eqref{m-diss4} and \eqref{m-diss5} by $\overline{\zeta}$ and $\overline{\xi}$, integrating over $\Omega$, using formal integration by parts and the definition of $b$ and $c$, and adding the two equations  we get
\begin{equation}\label{m-diss6}
\beta ((\phi,\psi),(\zeta,\xi))=L(\zeta,\xi),\quad \forall (\zeta,\xi)\in H_0^1(\Omega)\times H_0^1(\Omega),
\end{equation}
where 
\begin{equation*}
\beta ((\phi,\psi),(\zeta,\xi))=\int_{\Omega}\nabla \phi\cdot \nabla\overline{\zeta}dX+a\int_{\Omega}\nabla \psi\cdot \nabla\overline{\xi}dX.
\end{equation*}
and 
\begin{equation*}
L(\zeta,\xi)=\int_{\Omega}\left(f_2+bf_1+cf_3\right)\overline{\zeta}dX+\int_{\Omega}\left(f_4+df_3-cf_1\right)\overline{\xi}dX.
\end{equation*}
It is easy to see that, $\beta$ is a sesquilinear, continuous and coercive form on  $H_0^1(\Omega)\times H_0^1(\Omega)$ and $L$ is an antilinear and continuous form on $H_0^1(\Omega)\times H_0^1(\Omega)$. Then, it follows by Lax-Millgram theorem that \eqref{m-diss6} admits a unique solution $(\phi,\psi)\in H_0^1(\Omega)\times H_0^1(\Omega)$. By taking test functions  $(\zeta,\xi)\in \left(D(\Omega)\right)^2$, we see that \eqref{m-diss4}-\eqref{1m-diss3} hold in the distributional sense, from which we deduce that $\phi,\psi\in D(\Delta_{\rm Dir})$. Consequently, $\Phi=(\phi,-f_1,\psi,-f_3)^\top\in D(\AA)$ is the unique solution of \eqref{m-diss1}. Then, $\AA$ is an isomorphism, and since $\rho(\AA)$ is an open set of $\mathbb{C}$ (see Theorem 6.7 [Chapter III] in \cite{Kato01}), we easily get $R(\la I-\AA)=\HH$ for a sufficiently small $\la>0$. This, together with the dissipativeness of $\AA$, implies that $D(\AA)$ is dense in $\HH$ and that $\AA$ is $m-$dissipative in $\HH$ (see Theorems 4.5 and 4.6 in \cite{Pazy01}). The proof is thus complete.
\end{proof}

\noindent Thus, according to Lumer-Phillips Theorem (see \cite{Pazy01}), the operator  $\AA$ generates a $C_0-$semigroup of contractions $\left(e^{t\AA}\right)_{t\geq 0}$. Then,  the solution of the Cauchy problem \eqref{evolution} admits the following representation 
$$
\Phi(t)=e^{t\mathcal{A}}\Phi_0,\quad t\geq 0,
$$
which leads to the well-posedness of \eqref{evolution}. Hence, we have the following result. 
\begin{theoreme}
	{\rm
Let $\Phi_0\in \mathcal{H}$, then system \eqref{evolution} admits a unique weak solution $\Phi$ that satisfies
$$
\Phi\in C\left(\mathbb{R}_+,\mathcal{H}\right).
$$
Moreover, if $\Phi_0\in D(\mathcal{A})$, then problem \eqref{evolution} admits a unique strong solution $\Phi$ that  satisfies
$$
\Phi\in C^1\left(\R_{+},\mathcal{H}\right)\cap C\left(\R_+,D(\mathcal{A})\right).
$$}
\end{theoreme}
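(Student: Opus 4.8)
The plan is to read this theorem as a direct corollary of the m-dissipativity just established, invoking nothing beyond the standard Cauchy-problem theory for contraction semigroups. The only genuine input is the preceding proposition: $\mathcal{A}$ is densely defined, dissipative, and satisfies $R(\lambda I-\mathcal{A})=\mathcal{H}$ for some $\lambda>0$. I would take this as given, since all the real analytic work (the Lax--Milgram argument underlying maximality) lives there.

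First I would apply the Lumer--Phillips theorem as stated in \cite{Pazy01}: from density, dissipativeness, and the range condition, $\mathcal{A}$ generates a $C_0$-semigroup of contractions $\left(e^{t\mathcal{A}}\right)_{t\geq 0}$ on $\mathcal{H}$. I then set $\Phi(t):=e^{t\mathcal{A}}\Phi_0$. For the weak-solution statement, the strong continuity of the semigroup immediately gives that $t\mapsto e^{t\mathcal{A}}\Phi_0$ belongs to $C\left(\R_+,\mathcal{H}\right)$ for every $\Phi_0\in\mathcal{H}$; uniqueness follows because any mild solution of \eqref{evolution} must coincide with the semigroup orbit by the standard variation-of-constants/uniqueness argument for abstract linear Cauchy problems.

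For the strong-solution part I would cite the classical regularity result for generators of $C_0$-semigroups: if $\Phi_0\in D(\mathcal{A})$, then $e^{t\mathcal{A}}\Phi_0\in D(\mathcal{A})$ for all $t\geq 0$, the orbit $t\mapsto e^{t\mathcal{A}}\Phi_0$ is continuously differentiable with $\frac{d}{dt}e^{t\mathcal{A}}\Phi_0=\mathcal{A}\,e^{t\mathcal{A}}\Phi_0$, and it solves \eqref{evolution} in the classical sense. This yields $\Phi\in C^1\left(\R_+,\mathcal{H}\right)\cap C\left(\R_+,D(\mathcal{A})\right)$, completing the argument. Since every step is a verbatim application of the semigroup theory in \cite{Pazy01}, there is effectively no obstacle at this stage — the theorem merely records the well-posedness consequences of the generation property already obtained.
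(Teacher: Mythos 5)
Your proposal is correct and follows exactly the route the paper takes: the preceding proposition gives m-dissipativity, Lumer--Phillips yields the contraction semigroup $\left(e^{t\mathcal{A}}\right)_{t\geq 0}$, and the weak/strong solution statements are then the standard consequences of semigroup theory from \cite{Pazy01} applied to $\Phi(t)=e^{t\mathcal{A}}\Phi_0$. The paper itself treats the theorem as an immediate corollary in precisely this way, so there is nothing to add.
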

\subsection{Strong Stability}\label{SS-NR}
In this subsection, we will prove the strong stability of system \eqref{SYST1}. The main result of this subsection is the following theorem. 
\begin{theoreme}\label{Strong}
	{\rm
Assume that \eqref{CASE1} or \eqref{CASE2} holds. Then, the $C_0-$semigroup of contraction $\left(e^{t\mathcal{A}}\right)_{t\geq 0}$ is strongly stable in $\mathcal{H}$, i.e., for all $\Phi_0\in \mathcal{H}$, the solution of \eqref{evolution} satisfies 
$$
\lim_{t\to \infty}\|e^{t\mathcal{A}}\Phi_0\|_{\mathcal{H}}=0.
$$}
\end{theoreme}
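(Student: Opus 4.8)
The plan is to invoke the classical criterion of Arendt and Batty: a bounded $C_0$-semigroup on a reflexive space is strongly stable provided its generator has no eigenvalue on the imaginary axis and $\sigma(\mathcal{A})\cap i\mathbb{R}$ is countable. The situation here is especially favorable. Since $\Omega$ is a bounded cylinder and $\omega$ has Lipschitz boundary, the embedding $D(\mathcal{A})\hookrightarrow\mathcal{H}$ is compact (the $H_0^1$-components embed compactly into $L^2$ by Rellich, and the $D(\Delta_{\mathrm{Dir}})$-components into $H_0^1$), so $\mathcal{A}$ has compact resolvent and its spectrum consists of isolated eigenvalues of finite multiplicity. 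In particular $\sigma(\mathcal{A})\cap i\mathbb{R}$ is automatically countable and equals $\sigma_p(\mathcal{A})\cap i\mathbb{R}$, and there is neither continuous nor residual spectrum to worry about. Moreover the $m$-dissipativity proof already shows $\mathcal{A}$ is an isomorphism, so $0\in\rho(\mathcal{A})$. Thus the whole theorem reduces to a single claim: for every $\lambda\in\mathbb{R}^\ast$, the equation $\mathcal{A}\Phi=i\lambda\Phi$ with $\Phi=(\phi,v,\psi,z)^\top\in D(\mathcal{A})$ forces $\Phi=0$.

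To prove this I would first extract the effect of the damping. Taking the real part of $\langle\mathcal{A}\Phi,\Phi\rangle_{\mathcal{H}}=i\lambda\|\Phi\|_{\mathcal{H}}^2$ and using the dissipation identity \eqref{diss} gives $\int_\Omega b\abs{v}^2\,dX=\int_\Omega d\abs{z}^2\,dX=0$. Hence $v\equiv0$ on $\omega_b$ (and $z\equiv0$ on $\omega_d$ under \eqref{CASE2}). Reading off $v=i\lambda\phi$ and $z=i\lambda\psi$ from the first and third components of $\mathcal{A}\Phi=i\lambda\Phi$, and using $\lambda\neq0$ together with $\omega_c\subset\omega_b$, we obtain $\phi\equiv0$ on $\omega_c$. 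Inserting this into the second component $\Delta\phi-bv-cz=i\lambda v$ restricted to $\omega_c$, where $\phi$, $v$, and hence $\Delta\phi$ all vanish, leaves $cz=0$; since $\abs{c}\geq c_0>0$ there, $z\equiv0$ and thus $\psi\equiv0$ on $\omega_c$. So in both configurations we arrive at $\phi=\psi=0$ on the coupling region $\omega_c=(\alpha_2,\alpha_3)\times\omega$.

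The remaining, and main, step is to propagate this vanishing to all of $\Omega$. This is where the cylindrical geometry does the work: expanding in an orthonormal basis $(e_n)_{n\geq1}$ of $L^2(\omega)$ of Dirichlet eigenfunctions $-\Delta_\omega e_n=\mu_n e_n$ and writing $\phi=\sum_n\phi_n(x)e_n$, $\psi=\sum_n\psi_n(x)e_n$, the fact that $b,c,d$ depend on $x$ alone decouples the eigenvalue problem into, for each $n$, a $2\times2$ linear system of second-order ODEs in $x$ for $(\phi_n,\psi_n)$ with $L^\infty$ coefficients and Dirichlet endpoint conditions. Recasting it as a first-order system $Y'=M(x)Y$ for $Y=(\phi_n,\phi_n',\psi_n,\psi_n')^\top$ with $M\in L^\infty(0,1)\subset L^1(0,1)$, uniqueness for the Cauchy problem (via Gronwall) applies. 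Since $\phi_n\equiv\psi_n\equiv0$ on $(\alpha_2,\alpha_3)$ yields $Y=0$ on that subinterval, we conclude $Y\equiv0$ on $(0,1)$, so $\phi_n=\psi_n=0$ for every $n$. Therefore $\phi=\psi=0$, whence $v=z=0$ and $\Phi=0$, contradicting that $\Phi$ is an eigenvector.

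I expect the genuinely delicate point to be this reduction-and-uniqueness step: one must justify that the modal coefficients satisfy the ODE system in a strong enough sense for Cauchy uniqueness to apply, which requires care with the merely $L^\infty$ coefficients $b,c,d$ and with the regularity $\phi,\psi\in D(\Delta_{\mathrm{Dir}})$, and one must treat both \eqref{CASE1} and \eqref{CASE2} within the same framework. By contrast, had $\Omega$ not been cylindrical, this step would demand a bona fide elliptic unique-continuation theorem for the coupled system across the interface, which is far more subtle; the separation of variables is precisely what keeps the argument elementary here.
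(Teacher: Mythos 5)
Your proposal is correct and follows the paper's skeleton up to the last step: both arguments invoke Arendt--Batty with compact resolvent to reduce the theorem to the absence of purely imaginary eigenvalues, both use the dissipation identity \eqref{diss} to get $v=\phi=0$ on $\omega_b$ (and $z=\psi=0$ on $\omega_d$ under \eqref{CASE2}), and both then read off $cz=0$, hence $\psi=z=0$, on $\omega_c$ from the second equation. Where you genuinely diverge is in propagating the vanishing from $\omega_c$ to all of $\Omega$: the paper observes that, since $bv$ and $cz$ vanish identically, $\phi$ and $\psi$ satisfy the decoupled Helmholtz equations $\lambda^2\phi+\Delta\phi=0$ and $\lambda^2\psi+a\Delta\psi=0$ in $\Omega$ and vanish on open subsets, and then simply cites the unique continuation theorem for second-order elliptic operators. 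You instead separate variables in the cross-section $\omega$ and reduce to a first-order ODE system $Y'=M(x)Y$ with $M\in L^\infty(0,1)$, concluding by Carath\'eodory/Gronwall uniqueness for the Cauchy problem. Your route is more elementary and self-contained (no elliptic unique continuation needed), and it is consistent with the machinery the paper deploys anyway in Section \ref{PS-NR}; the regularity worry you flag is harmless, since the modal equation itself forces $\phi_j,\psi_j\in H^2(0,1)$ (cf.\ \eqref{newdomain}) and ODE uniqueness only requires $L^1$ coefficients. The paper's route is shorter and does not depend on the cylindrical structure of $\Omega$ or on $b,c,d$ being functions of $x$ alone, so it would survive perturbations of the geometry where your modal decoupling breaks down. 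Either argument is acceptable here.
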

\begin{proof}
Since the resolvent of $\mathcal{A}$ is compact in $\mathcal{H}$, then according to Arendt-Batty theorem see \cite[p. 837]{Arendt01}, system \eqref{SYST1} is strongly stable if and only if $\mathcal{A}$ does not have pure imaginary eigenvalues that is $\sigma(\mathcal{A})\cap i\mathbb{R}=\emptyset$. Since from subsection \ref{WPP}, we already have $0\in \rho(\mathcal{A})$, it remains to show that $\sigma(\mathcal{A})\cap i\mathbb{R}^{\ast}=\emptyset$. For this aim, suppose by contradiction that there exists a real number $\lambda\neq 0$ and $\Phi=(\phi,v,\psi,z)^{\top}\in D(\mathcal{A})\backslash \{0\}$ such that 
\begin{equation}\label{AUiLU}
\mathcal{A}\Phi=i\la \Phi.
\end{equation}
Detailing \eqref{AUiLU}, we get the following system 
\begin{eqnarray}
v&=&i\lambda \phi\hspace{1cm}\text{in}\quad  \Omega,\label{kvss4}\\ \noalign{\medskip}
\Delta \phi-bv-c z&=&i\lambda v\hspace{1cm}\text{in }\quad \Omega,\label{kvss5}\\ \noalign{\medskip}
z&=&i\lambda \psi\hspace{1cm} \text{in}\quad\Omega,\label{kvss6}\\ \noalign{\medskip}
a\Delta \psi-dz+c  v&=&i\la z\hspace{1cm}\text{in}\quad \Omega\label{kvss7}.
\end{eqnarray}
From \eqref{diss} and \eqref{AUiLU}, we have 
\begin{equation}\label{str-1}
	0=\Re\left(i\la \|\Phi\|^2_{\mathcal{H}}\right)=\Re\left(\left<\mathcal{A}\Phi,\Phi\right>_{\mathcal{H}}\right)=-\int_{\Omega}b\abs{v}^2dX-\int_{\Omega}d\abs{z}^2dX\leq 0.
\end{equation}
Thus, from \eqref{kvss4}, \eqref{kvss6}, \eqref{str-1} and the fact that $\la \neq 0$, we have
\begin{equation}\label{str-2}
\left\{	\begin{array}{lll}
		\displaystyle \phi= v=0 \  \ \text{in} \ \ \omega_b, \quad  &\text{if}& \ \eqref{CASE1} \ \text{holds},\\[0.1in]
		\displaystyle \phi=v=0 \ \ \text{in} \ \ \omega_b \ \ \text{and} \ \ \psi=z=0 \ \ \text{in} \ \ \omega_d, \quad &\text{if}& \ \eqref{CASE2}\  \text{holds}.
	\end{array}\right.
\end{equation}
If  \eqref{CASE1} holds. Then, from \eqref{kvss4}-\eqref{kvss5}, \eqref{str-2} and the fact that $\la \neq 0$, we get
\begin{equation}\label{2.20}
cz=0\ \ \text{in}\ \ \omega_b \supset \omega_c \ \ \text{and consequently} \ \ \psi=z=0 \ \ \text{in} \ \ \omega_c.
\end{equation}
Inserting \eqref{kvss4} and \eqref{kvss6} in \eqref{kvss5} and \eqref{kvss7}, respectively, then using \eqref{str-2} and \eqref{2.20}, we obtain
\begin{equation}\left\{
\begin{array}{lll}
\la ^2\phi+\Delta \phi=0 & \text{in}& \Omega,\\[0.1in]
\la ^2\psi+a\Delta \psi=0 & \text{in}& \Omega,\\[0.1in]
\phi=0 &\text{in} &\omega_b\subset \Omega,\\[0.1in]
\psi=0 &\text{in} &\omega_c\subset \Omega.
\end{array}\right.
\end{equation}
Using the unique continuation theorem, we get
\begin{equation}\label{2.22}
\phi=\psi=0\quad \text{in}\quad \Omega.
\end{equation}
Finally, from \eqref{kvss4}, \eqref{kvss6} and \eqref{2.22}, we deduce that 
$$\Phi=0.$$

\noindent Let us continue the proof in case that \eqref{CASE2} holds.\,Then from \eqref{str-2},  \eqref{kvss4}-\eqref{kvss7} and the fact that $\omega_c \subset \omega_b\cap\omega_d\neq \emptyset$, again using the unique continuation theorem it is easy to conclude that
$$
\Phi=0 \ \ \text{in} \ \ \Omega.
$$

\noindent The proof has been completed.
\end{proof}

%\noindent Now, we shall analyze the strong stability of system \eqref{evolution}. Since the resolvent of $\AA$ is compact in $\HH$, then according to Arendt-Batty theorem see (Page 837 in \cite{..}), system \eqref{SYST1} is strongly stable if and only if $\AA$ doesn't have pure imaginary eigenvalues, that is, $\sigma(\mathcal{A})\cap i\R=\emptyset$. For this aim, suppose by contradiction that there exists $\la\in \mathbb{R}$ and $U\in D(\mathcal{A})\backslash \{0\}$ such that $\mathcal{A}U=i\la U$. Now, using the same argument and computation in Lemma 2.6 in \cite{may paper with ibtisssam}, we obtain $U=0$, and consequently $\mathcal{A}$ has no pure imaginary eigenvalues. Then, $\displaystyle{\lim_{t\to \infty}E(t)=0}$.
%%%%%%%%%%%%%%%%%%%%%%%%%%%%%%%%%%%%%%%%%%%%%%%%
%%%%%%%%%%% Section Polynomial Stability
%%%%%%%%%%%%%%%%%%%%%%%%%%%%%%%%%%%%%%%%%%%%%%%% 
\section{Polynomial Stability}\label{PS-NR}
\noindent The aim of this section is to prove the polynomial stability of the system \eqref{SYST1}. Our main results in this section are the following theorems. 
\begin{theoreme}\label{G-P}
	{\rm
Assume that \eqref{CASE1} holds. Then, there exists a constant $C>0$ independent of $\Phi_0$, such that the energy of system \eqref{SYST1} satisfies  the following estimation
\begin{equation}\label{Polest}
E(t)\leq \frac{C}{t^{\frac{2}{\ell}}}\|\Phi_0\|^2_{D(\mathcal{A})},\quad \forall t> 0,\quad \forall \Phi_0\in D(\AA),
\end{equation}
where 
\begin{equation}\label{ell}
\ell=\left\{\begin{array}{lll}
4&\text{if}&a=1,\\[0.1in]
6&\text{if}&a\neq 1. 
\end{array}
\right.
\end{equation}}
\end{theoreme}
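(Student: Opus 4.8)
The plan is to derive Theorem \ref{G-P} from the Borichev--Tomilov characterization of polynomial stability. Since $\AA$ generates a contraction (hence bounded) semigroup and we already know, from Theorem \ref{Strong} together with the compactness of the resolvent, that $i\R\subset\rho(\AA)$, the energy estimate \eqref{Polest} with exponent $2/\ell$ is equivalent to the high-frequency resolvent bound
\begin{equation*}
\sup_{\lambda\in\R,\,|\lambda|\ge 1}\frac{1}{|\lambda|^{\ell}}\,\norm{(i\lambda I-\AA)^{-1}}_{\mathcal L(\HH)}<\infty,
\end{equation*}
with $\ell$ as in \eqref{ell}. So the entire proof reduces to this estimate. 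I would argue by contradiction: assume there exist $\lambda_n\in\R$ with $|\lambda_n|\to\infty$ and $\Phi_n=(\phi_n,v_n,\psi_n,z_n)^\top\in D(\AA)$ with $\norm{\Phi_n}_{\HH}=1$ such that $F_n:=(i\lambda_n I-\AA)\Phi_n$ satisfies $\norm{F_n}_{\HH}=o(|\lambda_n|^{-\ell})$, and then show $\norm{\Phi_n}_{\HH}\to0$, a contradiction.

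Next I would exploit the cylindrical geometry by separation of variables. Let $(e_k)_{k\ge1}$ be an $L^2(\omega)$-orthonormal basis of Dirichlet eigenfunctions of $-\Delta_y$ on the cross-section $\omega$, with eigenvalues $0<\mu_1\le\mu_2\le\cdots\to\infty$, and write $X=(x,y)$ with $x\in(0,1)$. Since $a,b,c,d$ depend only on $x$, the operator $\AA$ is the orthogonal direct sum over $k$ of one-dimensional operators $\AA_k$ obtained by replacing $\Delta$ with $\partial_x^2-\mu_k$ on $\big(H_0^1(0,1)\times L^2(0,1)\big)^2$, so that $\norm{(i\lambda I-\AA)^{-1}}_{\mathcal L(\HH)}=\sup_{k}\norm{(i\lambda I-\AA_k)^{-1}}$ and it suffices to bound the $1$D resolvents by $C|\lambda|^{\ell}$ uniformly in the transverse parameter $\mu_k\ge0$. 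Expanding $\phi_n,\psi_n,F_n$ along $(e_k)_k$ turns the spectral equation $(i\lambda_nI-\AA)\Phi_n=F_n$ into the family, for each $k$, of coupled ODEs on $(0,1)$ with Dirichlet conditions
\begin{align*}
-\phi_{n,k}''+(\mu_k-\lambda_n^2)\phi_{n,k}+i\lambda_n b\,\phi_{n,k}+i\lambda_n c\,\psi_{n,k}&=g_{1,n,k},\\
-a\,\psi_{n,k}''+(a\mu_k-\lambda_n^2)\psi_{n,k}-i\lambda_n c\,\phi_{n,k}&=g_{2,n,k},
\end{align*}
whose right-hand sides are built from $F_n$ and are correspondingly small.

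The driving input is the dissipation identity: taking the real part of $\langle F_n,\Phi_n\rangle_{\HH}$ and using \eqref{diss} with $d=0$ under \eqref{CASE1} gives $\int_\Omega b\,\abs{v_n}^2\,dX=\Re\langle F_n,\Phi_n\rangle_{\HH}=o(|\lambda_n|^{-\ell})$; combined with $v_n=i\lambda_n\phi_n-f_{1,n}$ and $b\ge b_0>0$ on $(\alpha_1,\alpha_4)$, this yields strong control of $v_n$ and $\phi_n$ on the damping sub-cylinder $\omega_b$. I would then run a chain of $1$D multiplier estimates, uniform in $\mu_k$, to upgrade this localized information to the full energy: (i) multiply the $\phi$-equation by $\overline{\phi_{n,k}}$ and by cutoff multipliers $h(x)\,\overline{\phi_{n,k}'}$ to convert the $L^2$-bound on $\omega_b$ into $H^1$ control and to propagate it across $(0,1)$ through the Dirichlet boundary terms; (ii) use the $\phi$-equation, in which $\psi_{n,k}$ enters only through the coupling $i\lambda_n c\,\psi_{n,k}$ supported on $\omega_c=(\alpha_2,\alpha_3)\times\omega\subset\omega_b$, to bound $\psi_{n,k}$ on $\omega_c$ in terms of the already controlled $\phi_{n,k}$, at the cost of some negative powers of $\lambda_n$; and (iii) apply a Rellich--Pohozaev multiplier $(x-x_0)\,\overline{\psi_{n,k}'}$ to the essentially undamped $\psi$-equation to spread this control over the whole fiber. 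Summing the per-mode inequalities by Parseval and balancing the powers of $\lambda_n$ should give $\norm{\Phi_n}_{\HH}=o(1)$.

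The delicate point, and where the value of $\ell$ is decided, is step (iii): because $\psi$ is only indirectly damped, one must quantitatively invert the undamped operator $-a\partial_x^2+(a\mu_k-\lambda_n^2)$ against the coupling source, and the cost depends on how close $\lambda_n^2$ is to the $\psi$-spectrum $\{a\mu_k+a\,n^2\pi^2:n\ge1\}$. When $a=1$ the two wave operators share the same longitudinal frequency $\lambda_n^2-\mu_k$, the coupling acts resonantly, and one expects the transfer to lose only two powers of $\lambda_n$, giving $\ell=4$; when $a\neq1$ the longitudinal frequencies $\lambda_n^2-\mu_k$ and $\lambda_n^2/a-\mu_k$ no longer match, the near-resonant transverse modes resist the transfer, and one pays an additional factor $|\lambda_n|^2$, giving $\ell=6$. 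The main obstacle is precisely making all these multiplier estimates uniform in $\mu_k$, especially for the modes with $\mu_k$ comparable to $\lambda_n^2$; the remaining regimes ($|\lambda_n|$ bounded, or $\mu_k-\lambda_n^2\to+\infty$ where both operators are coercive) are harmless and are dispatched using $i\R\subset\rho(\AA)$ and ellipticity.
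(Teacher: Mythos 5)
Your plan follows essentially the same route as the paper: reduction to the Borichev--Tomilov resolvent bound, a contradiction argument after expanding along the Dirichlet eigenfunctions of the cross-section, the dissipation identity to control $v$ and $\phi$ on the damping sub-cylinder, localized cut-off multipliers to transfer this to $\psi$ through the coupling zone, and finally Rellich-type multipliers $h\overline{\phi_j'}$, $h\overline{\psi_j'}$ with $h$ vanishing at $x=0,1$ to globalize. Your heuristic for the $a=1$ versus $a\neq 1$ dichotomy also matches the paper's mechanism, which is the term $(a-1)\lambda^3\Im\bigl(\int_0^1\theta_2\phi_j\overline{\psi_j}\,dx\bigr)$ surviving in the cross-multiplier identity of Lemma \ref{LEMMA3} exactly when the speeds differ.
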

\begin{theoreme}\label{1G-P}
	{\rm
Assume that \eqref{CASE2} holds. Then, there exists a constant $C>0$ independent of $\Phi_0$, such that the energy of system \eqref{SYST1} satisfies  the following estimation
\begin{equation}\label{Polest1}
E(t)\leq \frac{C}{t}\|\Phi_0\|^2_{D(\mathcal{A})},\quad \forall t> 0,\quad \forall \Phi_0\in D(\AA).
\end{equation}}
\end{theoreme}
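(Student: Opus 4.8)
The plan is to prove Theorem \ref{1G-P} through the frequency-domain characterization of polynomial stability due to Borichev and Tomilov. Since $\mathcal{A}$ has compact resolvent and, by Theorem \ref{Strong}, no purely imaginary eigenvalue, we have $i\mathbb{R}\subset\rho(\mathcal{A})$. The Borichev--Tomilov theorem then reduces the estimate \eqref{Polest1} --- which is exactly $\|e^{t\mathcal{A}}\Phi_0\|_{\mathcal{H}}\le C\,t^{-1/2}\|\Phi_0\|_{D(\mathcal{A})}$, and hence corresponds to the exponent $2$ --- to the single uniform resolvent bound
\[
\sup_{\lambda\in\mathbb{R},\;|\lambda|\ge 1}\frac{1}{|\lambda|^{2}}\,\big\|(i\lambda I-\mathcal{A})^{-1}\big\|_{\mathcal{L}(\mathcal{H})}<\infty .
\]
I would argue by contradiction: if this fails, there exist $\lambda_n\in\mathbb{R}$ with $|\lambda_n|\to\infty$ and $\Phi_n=(\phi_n,v_n,\psi_n,z_n)^\top\in D(\mathcal{A})$ with $\|\Phi_n\|_{\mathcal{H}}=1$ such that $\lambda_n^{2}(i\lambda_n I-\mathcal{A})\Phi_n=:F_n\to 0$ in $\mathcal{H}$. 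Writing this out componentwise and eliminating $v_n=i\lambda_n\phi_n+o(1)$, $z_n=i\lambda_n\psi_n+o(1)$ yields a pair of Helmholtz-type equations with homogeneous Dirichlet data for $\phi_n,\psi_n$, coupled through $c$ and damped through $b$ and $d$. The goal is to deduce $\|\Phi_n\|_{\mathcal{H}}\to 0$, the desired contradiction.

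The first step is the dissipation identity. Taking the real part of $\langle\lambda_n^{2}(i\lambda_n I-\mathcal{A})\Phi_n,\Phi_n\rangle_{\mathcal{H}}$ and invoking \eqref{diss} gives
\[
\int_\Omega b\,|v_n|^{2}\,dX+\int_\Omega d\,|z_n|^{2}\,dX=\frac{1}{\lambda_n^{2}}\,\Re\langle F_n,\Phi_n\rangle_{\mathcal{H}}=o(\lambda_n^{-2}).
\]
Since under \eqref{CASE2} both $b\ge b_0>0$ and $d\ge d_0>0$ on $(\alpha_1,\alpha_4)$, this localizes both velocity components, forcing $v_n,z_n\to 0$ in $L^{2}\big((\alpha_1,\alpha_4)\times\omega\big)$ with a quantitative rate; equivalently $\lambda_n\phi_n$ and $\lambda_n\psi_n$ tend to $0$ there. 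This is the damping information that must be propagated to all of $\Omega$.

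Next I would carry out the modal decomposition. Let $(e_k)_{k\ge 1}$ be an $L^{2}(\omega)$-orthonormal basis of Dirichlet eigenfunctions of $-\Delta$ on the cross-section $\omega$, with eigenvalues $0<\mu_1\le\mu_2\le\cdots\to\infty$. Expanding $\phi_n,v_n,\psi_n,z_n$ in this basis and using that $b,c,d$ depend on $x$ only, the system separates into a family of one-dimensional coupled boundary-value problems on $(0,1)$ for the coefficients $(\phi_{n,k},\psi_{n,k})$, with $-\Delta$ replaced by $-\partial_x^{2}+\mu_k$ and homogeneous Dirichlet endpoints, while the norm becomes $\|\Phi_n\|_{\mathcal{H}}^{2}=\sum_k\int_0^1\big(a|\phi_{n,k}'|^{2}+a\mu_k|\phi_{n,k}|^{2}+|v_{n,k}|^{2}+|\psi_{n,k}'|^{2}+\mu_k|\psi_{n,k}|^{2}+|z_{n,k}|^{2}\big)\,dx$. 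It then suffices to bound each mode uniformly in $k$ with summable constants.

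The heart of the proof --- and the main obstacle --- is a frequency-multiplier analysis of these $1$D systems. For each mode I would multiply the equations by $\overline{\phi_{n,k}}$ and by $p(x)\,\overline{\phi_{n,k}'}$ (and the analogues for $\psi$), with a piecewise-affine weight $p$ adapted to the nested points $\alpha_1<\alpha_2<\alpha_3<\alpha_4$, integrate by parts, and combine the resulting identities so that the local $L^{2}$-control furnished by the damping on $(\alpha_1,\alpha_4)$ dominates the global energy $\int_0^1\big(|\phi_{n,k}'|^{2}+(\mu_k+\lambda_n^{2})|\phi_{n,k}|^{2}\big)\,dx$; the coupling $c$, nonzero on $(\alpha_2,\alpha_3)\subset(\alpha_1,\alpha_4)$, transfers estimates between the two components. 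The difficulty is that, absent a geometric control condition, no uniform observability holds, so the estimates must be made quantitative in both $\lambda_n$ and $\mu_k$ and split into frequency regimes: the oscillatory case $\mu_k\lesssim\lambda_n^{2}$, where the multiplier identities propagate the local control, versus the evanescent case $\mu_k\gg\lambda_n^{2}$, where the operator is coercive and direct elliptic estimates apply. Tracking the powers precisely so that the total loss is $O(\lambda_n^{2})$ and no worse is exactly what produces the sharp $t^{-1}$ rate; summing the modal estimates then yields $\|\Phi_n\|_{\mathcal{H}}\to 0$, contradicting $\|\Phi_n\|_{\mathcal{H}}=1$ and completing the proof.
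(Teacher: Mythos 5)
Your proposal follows essentially the same route as the paper: Borichev--Tomilov with exponent $\ell=2$, a contradiction argument, the dissipation identity localizing both $v$ and $z$ on $(\alpha_1,\alpha_4)\times\omega$, separation of variables in the cross-section $\omega$, and one-dimensional cut-off/multiplier identities that propagate the local control to the whole interval $(0,1)$. The only real difference is that the paper never splits into the regimes $\mu_j^2\lesssim\lambda^2$ versus $\mu_j^2\gg\lambda^2$: it instead adds the two multiplier identities (one from the multiplier $-2h\overline{\phi_j'}$, the other from $-h'\overline{\phi_j}$, with $h(x)=x\theta_4+(x-1)\theta_5$ piecewise affine) so that the $(\lambda^2-\mu_j^2)$ terms cancel identically, which is what makes the estimate uniform in the transverse frequency without any case distinction.
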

\noindent To prove them, let us first recall the following   necessary and sufficient condition on the polynomial stability of semigroup proposed by  Borichev-Tomilov in \cite{Borichev01} (see also \cite{Batty01}, \cite{RaoLiu01},
and the recent paper   \cite{ROZENDAAL2019359}).
\begin{theoreme}\label{GENN}
	{\rm
Assume that $A$ is the generator of a strongly continuous semigroup of contractions $\left(e^{t A}\right)_{t\geq 0}$ on a Hilbert space $H$. If 
\begin{equation}\label{irrhoA}
i\mathbb{R}\subset \rho(A),
\end{equation} then for a fixed $\ell>0$ the following conditions are equivalent 
\begin{equation}\label{limsup}
\limsup_{\la \in \R, \  |\la|\to \infty} |\la|^{-\ell}\|(i\la I-A)^{-1}\|_{\mathcal{L}(\mathcal{H})}<\infty,
\end{equation}
\begin{equation}
\|e^{tA}X_0\|_H^2\leq \frac{C}{t^{\frac{2}{\ell}}}\|X_0\|^2_{D(A)},\ \ X_0\in D(A),\ \text{for some}\ C>0.
\end{equation}}
\end{theoreme}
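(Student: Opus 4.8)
The statement is the Borichev--Tomilov characterization of polynomial decay \cite{Borichev01}, so the plan is to prove the equivalence of the resolvent-growth condition \eqref{limsup} and the energy estimate under the standing hypothesis \eqref{irrhoA}. The first move is a reformulation: since \eqref{irrhoA} gives $0\in\rho(A)$, the graph norm on $D(A)$ is equivalent to $\|A\,\cdot\,\|_H$, and substituting $Y_0=AX_0$ shows that the energy decay $\|e^{tA}X_0\|_H^2\le Ct^{-2/\ell}\|X_0\|_{D(A)}^2$ is equivalent to the operator bound $\|e^{tA}A^{-1}\|_{\mathcal{L}(H)}=O(t^{-1/\ell})$. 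I would then prove that this operator decay is equivalent to \eqref{limsup}.

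The deep implication, and the one actually used in Theorems \ref{G-P} and \ref{1G-P}, is \emph{resolvent growth $\Rightarrow$ decay}, and here the Hilbert-space structure is indispensable. I would represent $e^{tA}A^{-1}X_0$, for $X_0$ in a dense subspace, by an inverse Fourier--Laplace integral along the imaginary axis, the representation being legitimized by the smoothing of $A^{-1}$ together with the contraction property. The decisive quantitative tool is the Plancherel identity $\int_{\mathbb{R}}\|R(\eta+is,A)x\|^2\,ds=2\pi\int_0^\infty e^{-2\eta t}\|e^{tA}x\|^2\,dt$ and its adjoint analogue, which are valid \emph{because} $H$ is a Hilbert space. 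Feeding the bound $\|R(is,A)\|\lesssim(1+|s|)^\ell$ into these identities, with the frequency integral split according to the size of $|s|$ and the free parameter $\eta$ calibrated to the time $t$, yields decay of $\|e^{tA}A^{-1}\|$ in an averaged sense that is then promoted to the pointwise rate $t^{-1/\ell}$ by the semigroup property.

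The reverse implication, \emph{decay $\Rightarrow$ resolvent growth}, is the softer one. I would insert the operator decay $\|e^{tA}A^{-1}\|=O(t^{-1/\ell})$ into the Laplace representation $R(\lambda,A)A^{-1}=\int_0^\infty e^{-\lambda t}e^{tA}A^{-1}\,dt$, continue it to $\lambda=is$ using the oscillation of $e^{-ist}$ to absorb the slowly decaying tail, and recover a bound on $\|R(is,A)\|$ itself through the algebraic identity $R(is,A)=is\,R(is,A)A^{-1}-A^{-1}$, which gives \eqref{limsup}.

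I expect the main obstacle to be the sharp exponent in the deep direction: obtaining exactly $t^{-1/\ell}$, rather than a lossy rate, forces a careful balance between the polynomial resolvent growth and the frequency cutoff, and it is precisely this balance that uses that the space is Hilbert and not merely Banach. My strategy would be to settle the scalar model first --- where the claim reduces to a Paley--Wiener/Hardy-space estimate for a function analytic in a half-plane whose boundary trace grows polynomially --- and then to lift it to operators by reading the resolvent bound as a uniform family of such scalar estimates tested against vectors of $H$.
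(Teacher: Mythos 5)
The paper does not prove Theorem \ref{GENN} at all: it is quoted as a known result of Borichev--Tomilov \cite{Borichev01} (with \cite{Batty01} and \cite{ROZENDAAL2019359} as companion references) and used as a black box in the proofs of Theorems \ref{G-P} and \ref{1G-P}. So there is no in-paper argument to compare yours against; the only meaningful question is whether your sketch would stand on its own as a proof of the cited theorem.

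As a proof plan it follows the standard modern route (the one in, e.g., \cite{ROZENDAAL2019359} rather than the original functional-calculus argument of \cite{Borichev01}), and the individual ingredients you name are the right ones: the reduction to $\|e^{tA}A^{-1}\|_{\mathcal{L}(H)}=O(t^{-1/\ell})$ via invertibility of $A$, the Plancherel identity for the Laplace transform of orbits as the Hilbert-space-specific tool, and the identity $R(is,A)=is\,R(is,A)A^{-1}-A^{-1}$ for the converse. Two caveats. First, in the easy direction your phrase about ``using the oscillation of $e^{-ist}$ to absorb the slowly decaying tail'' points at an improper Laplace integral that does not converge absolutely when $1/\ell<1$; the clean argument instead truncates at a finite time $T$, writes $R(is,A)=\int_0^T e^{-ist}e^{tA}\,dt+e^{-isT}e^{TA}R(is,A)$, uses your algebraic identity to absorb the remainder once $|s|\,\|e^{TA}A^{-1}\|\le \tfrac12$, and then chooses $T\sim|s|^{\ell}$; this yields $\|R(is,A)\|=O(|s|^{\ell})$ directly. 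Second, and more seriously, the hard direction is not actually carried out: you state that feeding $\|R(is,A)\|\lesssim(1+|s|)^{\ell}$ into Plancherel ``with the frequency integral split according to the size of $|s|$ and the free parameter $\eta$ calibrated to the time $t$'' yields averaged decay, and then that the semigroup property promotes this to the pointwise rate --- but you yourself flag the sharp exponent as ``the main obstacle'' without resolving it. The promotion step itself is fine (contractivity gives $t\|e^{tA}y\|^2\le\int_0^t\|e^{sA}y\|^2\,ds$), but the quantitative heart of Borichev--Tomilov is precisely the loss-free passage from the polynomial resolvent bound to the weighted $L^2$-in-time estimate, and that computation is absent from your sketch. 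As submitted, this is a correct roadmap with the decisive estimate still owed.
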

\noindent According to Theorem \ref{GENN}, to prove Theorems \ref{G-P}, \ref{1G-P}, we need to prove that  \eqref{irrhoA} and \eqref{limsup} hold,
where $\ell$ is defined in \eqref{ell} if \eqref{CASE1} holds, and $\ell=2$ if \eqref{CASE2} holds.   As condition \eqref{irrhoA} is already proved in Theorem \ref{Strong},  we only need to prove condition \eqref{limsup}. Here we use a contradiction argument. Namely, suppose that \eqref{limsup} is false, then there exists $\{(\la_n,\Phi^{(n)}:=(\phi^{(n)},v^{(n)},\psi^{(n)},z^{(n)}))\}_{n\geq 1}\subset \mathbb{R}^{\ast}_{+}\times D(\mathcal{A})$ with 
\begin{equation}\label{Polesteq0}
\la_n\to \infty\ \ \text{as}\ \ n\to \infty\ \ \text{and}\ \ \|\Phi^{(n)}\|_{\mathcal{H}}=\|(\phi^{(n)},v^{(n)}, \psi^{(n)},z^{(n)})\|_{\mathcal{H}}=1,\ \forall n\in \mathbb{N},
\end{equation}
such that 
\begin{equation}\label{Polesteq}
\la_n^{\ell}\left(i\la_n-\AA\right)\Phi^{(n)}=F^{(n)}:=(f^{1,(n)},f^{2,(n)},f^{3,(n)},f^{4,(n)})\to 0\quad \text{in}\quad \mathcal{H}, \ \text{as} \ n \to \infty. 
\end{equation}
For simplicity, we drop the index $n$. Detailing \eqref{Polesteq}, we get 
\begin{equation}\label{Polesteg1}
\left\{\begin{array}{lll}
i\la\phi-v =\la^{-\ell}f^1\to 0&\text{in} & H_0^1(\Omega),\\[0.1in]
i\la v-\Delta \phi +bv +cz =\la^{-\ell}f^2\to 0&\text{in} & L^2(\Omega),\\[0.1in]
i\la\psi-z=\la^{-\ell}f^3\to 0&\text{in} & H_0^1(\Omega),\\[0.1in]
i\la z-a\Delta \psi+dz-cv=\la^{-\ell}f^4 \to 0&\text{in}& L^2(\Omega).
\end{array}
\right.
\end{equation}
Here we will check the condition \eqref{limsup} by finding a contradiction with \eqref{Polesteq0} by showing $\|\Phi\|_{\mathcal{H}}=o(1)$. The technique of the proof is related to the orthonormal basis decomposition combined with a new frequency multiplier technique. To this aim, let  $\{e_j\}_{j\in \mathbb{N}^*}$ be the orthonormal basis  of the Laplace operator with Dirichlet boundary conditions in $\omega$ such that
$$
-\Delta e_j=\mu_j^2e_j,
$$
and $\mu_j\to \infty$ when $j\to \infty$. We may expand $\phi$ into a series of the form 
\begin{equation}\label{Dec-phi}
\phi(X)=\sum_{j=1}^{\infty} \phi_j(x)e_j(x_1,\cdots,x_{N-1}),\quad X=(x,x_1,\cdots,x_{N-1})\in \Omega.
\end{equation}
Similarly, $v,\psi,z,f^1,f^2,f^3$ and $f^4$ can be decomposed into a form of series expansion similar to that in \eqref{Dec-phi} with, respectively, the coefficients   $v_j(x),\psi_j(x),z_j(x),f^1_j(x),f^2_j(x),f^3_j(x),f^4_j(x)$. This gives rise to functions 
\begin{equation}\label{newdomain}
(\phi_j,v_j,\psi_j,z_j)\in \left(\left(H^2(0,1)\cap H_0^1(0,1)\right)\times H_0^1(0,1)\right)^2\ \ \text{and}\ \ (f^1_j,f^2_j,f^3_j,f^4_j)\in \left(H_0^1(0,1)\times L^2(0,1)\right)^2. 
\end{equation}
Using the orthonormality of the set $\{e_j\}_{ j\in \mathbb{N^{\ast}}}$, system \eqref{Polesteg1} turns into the system of one-dimensional equations 
\begin{eqnarray}
i\la \phi_j-v_j&=&\la^{-\ell}f^1_j,\label{eq1-pol}\\[0.1in]
i\la v_j-\phi_j^{''}+\mu_j^2\phi_j+bv_j+cz_j&=&\la^{-\ell}f^2_j,\label{eq2-pol}\\[0.1in]
i\la \psi_j-z_j&=&\la^{-\ell}f^3_j,\label{eq3-pol}\\[0.1in]
i\la z_j-a\psi_j^{''}+a\mu_j^2\psi_j+dz_j-cv_j&=&\la^{-\ell}f^4_j.\label{eq4-pol}
\end{eqnarray}
where  " $'$ " represents the derivative with respect to $x$. System \eqref{eq1-pol}-\eqref{eq4-pol} is subjected to the following boundary conditions
$$
\phi_j(0)=\phi_j(1)=\psi_j(0)=\psi_j(1)=0.
$$
\noindent Note that from the orthonormal basis decomposition, we have 
\begin{equation*}
\left\{\begin{array}{l}
\displaystyle
\|F\|_{\mathcal{H}}^2=\sum_{j=1}^{\infty}\left(\|(f^1_j)'\|^2+\mu_j^2\|f^1_j\|^2+\|f^2_j\|^2+a\|(f^3_j)'\|^2+a\mu_j^2\|f^3_j\|^2+\|f^4_j\|^2\right),\\[0.1in]
\displaystyle
\|\Phi\|^2_{\HH}=\sum_{j=1}^{\infty}\left(\|(\phi_j)'\|^2+\mu_j^2\|\phi_j\|^2+\|v_j\|^2+a\|(\psi_j)'\|^2+a\mu_j^2\|\psi_j\|^2+\|z_j\|^2\right),
\end{array}
\right.
\end{equation*}
where $\|\cdot\|:=\|\cdot\|_{L^2(0,1)}$ and $\|\cdot\|_\infty:= \|\cdot\|_{L^\infty (0,1)}$. Inserting \eqref{eq1-pol} and \eqref{eq3-pol} respectively in \eqref{eq2-pol} and \eqref{eq4-pol}, we get 
\begin{eqnarray}
\la^2\phi_j+\phi^{''}_j-\mu_j^2\phi_j-i\la b\phi_j-i\la c\psi_j=F^1_j,\label{COMB1}\\[0.1in]
\la^2\psi_j+a\psi_j^{''}-a\mu_j^2\psi_j-i\la d\psi_j+i\la c\phi_j=F^2_j,\label{COMB2}
\end{eqnarray}
where
\begin{equation}\label{F1jF2j}
F^1_j=-\left(\frac{f^2_j}{\la^{\ell}}+\frac{if^1_j}{\la^{\ell-1}}+\frac{bf^1_j}{\la^{\ell}}+\frac{cf^3_j}{\la^{\ell}}\right)\quad \text{and}\quad F^2_j=-\left(\frac{f^4_j}{\la^{\ell}}+i\frac{f^3_j}{\la^{\ell-1}}\frac{df^3_j}{\la^{\ell}}-\frac{cf^1_j}{\la^{\ell}}\right). 
\end{equation}
%%%%%%%%%%%%%%%%%%%%%%%%%%%%%%%%
Before going on, let us first give the consequence of the dissipativeness property on the   solution $(\phi_j,v_j,\psi_j,z_j)$ of the system \eqref{eq1-pol}-\eqref{eq4-pol}.

 \begin{lemma}\label{INFO1}
	{\rm The solution $(\phi,v,\psi,z)$ of system \eqref{Polesteg1} satisfies \ma{the following estimations}}
\begin{eqnarray}
\label{CASE2-L1-eq2}
\displaystyle
\sum_{j=1}^{\infty}\|\sqrt{b}v_j\|^2=o\left(\la^{-\ell}\right),&\displaystyle
\sum_{j=1}^{\infty}\|\sqrt{d} z_j\|^2= o\left(\la^{-\ell}\right),\\[0.1in]
\displaystyle
\sum_{j=1}^{\infty}\|\la \sqrt{b} \phi_j\|^2 =o\left(\la^{-\ell}\right),&\displaystyle
\sum_{j=1}^{\infty}\|\la \sqrt{d} \psi_j\|^2=o\left(\la^{-\ell}\right).
\label{CASE2-L1-eq2phipsi}
\end{eqnarray}
%\left\{\begin{array}{ll}
%\displaystyle
%\sum_{j=1}^{\infty}\|\sqrt{b}v_j\|^2=o\left(\la^{-\ell}\right),&\displaystyle
%\sum_{j=1}^{\infty}\|\sqrt{d} z_j\|^2= o\left(\la^{-\ell}\right),\\[0.1in]
%\displaystyle
%\sum_{j=1}^{\infty}\|\la \sqrt{b} \phi_j\|^2 =o\left(\la^{-\ell}\right),&\displaystyle
%\sum_{j=1}^{\infty}\|\la \sqrt{d} \psi_j\|^2=o\left(\la^{-\ell}\right).
%\end{array}
%\right.
%\end{equation}
\end{lemma}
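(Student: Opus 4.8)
The plan is to extract all four estimates directly from the dissipation of the semigroup, since the coefficients $b$ and $d$ enter the problem quadratically only through the dissipation identity \eqref{diss}. First I would take the $\HH$-inner product of the scaled resolvent equation \eqref{Polesteq} with $\Phi$ and read off its real part. Using that $\Re\left(i\la\|\Phi\|_\HH^2\right)=0$ and that, by \eqref{diss}, $\Re\left<\AA\Phi,\Phi\right>_{\HH}=-\int_\Omega b\abs{v}^2dX-\int_\Omega d\abs{z}^2dX$, this yields
\[
\la^{\ell}\left(\int_\Omega b\abs{v}^2dX+\int_\Omega d\abs{z}^2dX\right)=\Re\left<F,\Phi\right>_{\HH}\leq \|F\|_{\HH}\|\Phi\|_{\HH}=\|F\|_{\HH},
\]
where I invoked Cauchy--Schwarz and $\|\Phi\|_{\HH}=1$ from \eqref{Polesteq0}. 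Since $\|F\|_{\HH}=o(1)$ by \eqref{Polesteq}, dividing by $\la^{\ell}$ gives $\int_\Omega b\abs{v}^2dX+\int_\Omega d\abs{z}^2dX=o\!\left(\la^{-\ell}\right)$.

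Next I would convert these two integrals into the series appearing in \eqref{CASE2-L1-eq2}. Because $b$ and $d$ depend only on the first variable $x$, Fubini's theorem combined with the orthonormality of $\{e_j\}$ in $L^2(\omega)$ gives $\int_\Omega b\abs{v}^2dX=\sum_{j}\|\sqrt b\,v_j\|^2$ and $\int_\Omega d\abs{z}^2dX=\sum_{j}\|\sqrt d\,z_j\|^2$. Both series are nonnegative, so each is separately $o\!\left(\la^{-\ell}\right)$, which is exactly \eqref{CASE2-L1-eq2}.

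To obtain the second line \eqref{CASE2-L1-eq2phipsi}, I would eliminate $v_j$ and $z_j$ via the first and third one-dimensional equations. From \eqref{eq1-pol}, $i\la\phi_j=v_j+\la^{-\ell}f^1_j$, hence $i\la\sqrt b\,\phi_j=\sqrt b\,v_j+\la^{-\ell}\sqrt b\,f^1_j$; taking $L^2(0,1)$-norms, squaring, summing over $j$ and using $(p+q)^2\leq 2p^2+2q^2$ yields
\[
\sum_{j}\|\la\sqrt b\,\phi_j\|^2\leq 2\sum_{j}\|\sqrt b\,v_j\|^2+2\la^{-2\ell}\sum_{j}\|\sqrt b\,f^1_j\|^2.
\]
The first sum is $o\!\left(\la^{-\ell}\right)$ by \eqref{CASE2-L1-eq2}, while $\sum_{j}\|\sqrt b\,f^1_j\|^2=\int_\Omega b\abs{f^1}^2dX\leq \|b\|_\infty\|f^1\|^2_{L^2(\Omega)}=o(1)$ (the $L^2$-norm of $f^1$ being controlled by $\|F\|_{\HH}$), so the second term is $o\!\left(\la^{-2\ell}\right)=o\!\left(\la^{-\ell}\right)$. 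Running the identical argument on \eqref{eq3-pol} produces the corresponding estimate for $\la\sqrt d\,\psi_j$, completing \eqref{CASE2-L1-eq2phipsi}.

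I do not expect a genuine obstacle: the entire content is the dissipation identity together with Parseval bookkeeping. The only point requiring mild care is verifying that the remainder terms created when passing from $v_j,z_j$ to $\la\phi_j,\la\psi_j$ are of strictly lower order; this holds because each such term carries an extra factor $\la^{-\ell}$ and $\|F\|_{\HH}=o(1)$, so it is in fact $o\!\left(\la^{-2\ell}\right)$ and is absorbed.
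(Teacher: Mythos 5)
Your proposal is correct and follows essentially the same route as the paper: the real part of $\left<\la^{\ell}(i\la I-\mathcal{A})\Phi,\Phi\right>_{\mathcal{H}}$ combined with the dissipation identity \eqref{diss} gives \eqref{CASE2-L1-eq2} after the orthonormal decomposition, and substituting $v_j$, $z_j$ from \eqref{eq1-pol} and \eqref{eq3-pol} multiplied by $\sqrt{b}$, $\sqrt{d}$ gives \eqref{CASE2-L1-eq2phipsi}. Your write-up simply makes explicit the Cauchy--Schwarz step and the absorption of the $o\left(\la^{-2\ell}\right)$ remainders that the paper leaves implicit.
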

\begin{proof}
First, taking the inner product of \eqref{Polesteq} with $\Phi$ in $\mathcal{H}$, we remark that 
\begin{equation}\label{L1-eq2}
\|\sqrt{b} v\|^2_{L^2(\Omega)}+\|\sqrt{d} z\|^2_{L^2(\Omega)}=-\Re\left(\left<\mathcal{A}\Phi,\Phi\right>_{\mathcal{H}}\right)=\Re\left(\left<(i\la I-\mathcal{A})\Phi,\Phi\right>_{\mathcal{H}}\right)=o(\la^{-\ell}).
\end{equation}
Thus, by the orthonormal basis decomposition, we get the first estimations in \eqref{CASE2-L1-eq2}. Now multiplying   \eqref{eq1-pol} (resp. \eqref{eq3-pol}) by $\sqrt{b}$
(resp. $\sqrt{d}$) and using the   estimations in  \eqref{CASE2-L1-eq2} and that $\|F\|_{\mathcal{H}}=o(1)$, we get the   estimations in  \eqref{CASE2-L1-eq2phipsi}. The proof has been completed.
\end{proof}

\subsection{Proof of Theorem \ref{G-P}} The proof of Theorem \ref{G-P} is divided into several Lemmas. \ma{In these following Lemmas, we assume that \eqref{CASE1} holds.}
%%%%%%%%%%%%% First Lemma
%\begin{lemma}\label{INFO1}
%	{\rm
%The solution $(\phi_j,v_j,\psi_j,z_j)$ of the system \eqref{eq1-pol}-\eqref{eq4-pol} satisfying \eqref{newdomain}, verifies the following estimations}
%\begin{equation}\label{L1-eq2}
%\sum_{j=1}^{\infty}\|v_j\|^2_{L^2(\alpha_1,\alpha_4)}=o\left(\la^{-\ell}\right)\quad \text{and}\quad \sum_{j=1}^{\infty}\|\la \phi_j\|^2_{L^2(\alpha_1,\alpha_4)}=o\left(\la^{-\ell}\right).
%\end{equation}  
%\end{lemma}
%\begin{proof}
%First, taking the inner product of \eqref{Polesteq} with $\Phi$ in $\mathcal{H}$, we remark that 
%\begin{equation}\label{p-L1-eq1}
%\|v\|^2_{L^2(\omega_b)}=-\Re\left(\left<\mathcal{A}\Phi,\Phi\right>_{\mathcal{H}}\right)=\Re\left(\left<(i\la I-\mathcal{A})\Phi,\Phi\right>_{\mathcal{H}}\right)=o(\la^{-\ell}).
%\end{equation}
%Thus, by the orthonormal basis decomposition, we get the first estimation in \eqref{L1-eq2}. Also, using \eqref{eq1-pol} and the first estimation in \eqref{L1-eq2}, we get the second estimation in  \eqref{L1-eq2}. The proof has been completed. 
%\end{proof}
%%%%%%%%%%%%%%%%%%%%%%
%%Combinantion
%%%%%%%%%%%%%%%%%%%%%%
%%%%%%%%%%%%%%%%%%%%%%
%%Lemma 2
%%%%%%%%%%%%%%%%%%%%%%
\begin{lemma}\label{LEMMA2}
{\rm The solution $(\phi,v,\psi,z)$ of  system \eqref{Polesteg1} satisfies the following estimation
\begin{equation}\label{L2-eq1}
\sum_{j=1}^{\infty}\left(\|\phi_j'\|^2_{L^2(D_{\varepsilon})}+\mu_j^2\|\phi_j\|^2_{L^2(D_{\varepsilon})}\right)=o\left(\la^{-\left(\frac{\ell}{2}+1\right)}\right),
\end{equation}
where $D_{\varepsilon}:=(\alpha_1+\varepsilon,\alpha_4-\varepsilon)$
with a positive real number $\varepsilon$ small enough such that
$\alpha_1+\varepsilon<\alpha_2<\alpha_3<\alpha_4-\varepsilon$.}
\end{lemma}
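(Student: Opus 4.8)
The plan is to prove \eqref{L2-eq1} by a cutoff-multiplier (local energy) argument applied to the second-order equation \eqref{COMB1}, upgrading the purely $L^2$ information on $\phi_j$ coming from Lemma \ref{INFO1} into a local $H^1$ bound inside the damping region. First I would fix a cutoff $\chi\in C_c^\infty((\alpha_1,\alpha_4))$ with $0\le\chi\le1$ and $\chi\equiv1$ on $D_\varepsilon=(\alpha_1+\varepsilon,\alpha_4-\varepsilon)$. Since $b\ge b_0>0$ on $(\alpha_1,\alpha_4)$, the second estimate in \eqref{CASE2-L1-eq2phipsi} gives $\sum_j\|\phi_j\|^2_{L^2(\alpha_1,\alpha_4)}=o(\la^{-\ell-2})$, which is the basic input.

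Next I would multiply \eqref{COMB1} by $\chi^2\overline{\phi_j}$, integrate over $(0,1)$, and integrate by parts (the boundary terms vanish because $\chi$ is compactly supported in the open interval). Taking the real part annihilates the dissipative term $i\la b|\phi_j|^2$, and after rearranging one obtains the identity
\[
\int_0^1\chi^2|\phi_j'|^2\,dx+\mu_j^2\int_0^1\chi^2|\phi_j|^2\,dx=\la^2\int_0^1\chi^2|\phi_j|^2\,dx-2\Re\int_0^1\chi\chi'\phi_j'\overline{\phi_j}\,dx+\la\,\Im\int_0^1\chi^2 c\,\psi_j\overline{\phi_j}\,dx-\Re\int_0^1 F^1_j\chi^2\overline{\phi_j}\,dx.
\]
The geometric cross term $2\Re\int\chi\chi'\phi_j'\overline{\phi_j}$ would be controlled by Young's inequality, $2|\chi\chi'\phi_j'\overline{\phi_j}|\le\tfrac12\chi^2|\phi_j'|^2+2(\chi')^2|\phi_j|^2$, so that the gradient part is absorbed into the left-hand side and only a harmless term bounded by $\|\phi_j\|^2_{L^2(\alpha_1,\alpha_4)}$ remains.

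Summing over $j$, the first right-hand term gives $\la^2\cdot o(\la^{-\ell-2})=o(\la^{-\ell})\subset o(\la^{-\ell/2-1})$ (as $\ell\ge2$), the absorbed geometric term is $o(\la^{-\ell-2})$, and the source term is negligible because $F^1_j$ in \eqref{F1jF2j} carries a factor $\la^{-(\ell-1)}$, whence $\sum_j\|F^1_j\|^2=o(\la^{-2\ell+2})$ and a Cauchy--Schwarz in $j$ yields $o(\la^{-3\ell/2})$. I expect the coupling term $\la\,\Im\int\chi^2 c\,\psi_j\overline{\phi_j}$ to be the main obstacle: since $c$ is supported in $(\alpha_2,\alpha_3)\subset D_\varepsilon$, the crude bound $\la\|\psi_j\|\,\|\phi_j\|_{L^2(\alpha_2,\alpha_3)}$ together with the trivial $\sum_j\|\psi_j\|^2=O(1)$ only produces $o(\la^{-\ell/2})$, which is one power short of the target.

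The resolution — and the key point of the lemma — is to exploit \eqref{eq3-pol}: since $\psi_j=(z_j+\la^{-\ell}f^3_j)/(i\la)$ and $\sum_j\|z_j\|^2\le\|\Phi\|^2_{\HH}=1$, one has the sharper bound $\sum_j\|\psi_j\|^2=O(\la^{-2})$. Combining this with $\sum_j\|\phi_j\|^2_{L^2(\alpha_2,\alpha_3)}=o(\la^{-\ell-2})$ via Cauchy--Schwarz in $j$ gives, for the coupling term, $\la\big(\sum_j\|\psi_j\|^2\big)^{1/2}\big(\sum_j\|\phi_j\|^2_{L^2(\alpha_2,\alpha_3)}\big)^{1/2}=\la\cdot O(\la^{-1})\cdot o(\la^{-\ell/2-1})=o(\la^{-\ell/2-1})$, exactly the required order. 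Since $\chi\equiv1$ on $D_\varepsilon$, collecting all these estimates bounds $\sum_j(\|\phi_j'\|^2_{L^2(D_\varepsilon)}+\mu_j^2\|\phi_j\|^2_{L^2(D_\varepsilon)})$ from above by the summed left-hand side, which is $o(\la^{-\ell/2-1})$, establishing \eqref{L2-eq1}.
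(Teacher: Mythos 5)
Your proposal is correct and follows essentially the same route as the paper: a cutoff supported in $(\alpha_1,\alpha_4)$ and equal to $1$ on $D_\varepsilon$ multiplied against \eqref{COMB1}, the real part killing the $i\la b|\phi_j|^2$ term, the input $\sum_j\|\la\sqrt{b}\,\phi_j\|^2=o(\la^{-\ell})$ from Lemma \ref{INFO1} giving $\sum_j\|\phi_j\|^2_{L^2(\alpha_1,\alpha_4)}=o(\la^{-\ell-2})$, and the coupling term handled by the improved bound $\sum_j\|\la\psi_j\|^2=O(1)$ coming from \eqref{eq3-pol} — exactly the estimate the paper uses in \eqref{L2-eq3}. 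The only cosmetic differences are that you use a squared cutoff with Young's inequality to absorb the cross term (the paper instead uses Cauchy--Schwarz together with $\sum_j\|\phi_j'\|^2=O(1)$), and that the estimate on $\phi_j$ you invoke is the first, not the second, entry of \eqref{CASE2-L1-eq2phipsi}.
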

\begin{proof}
First, let us fix the following cut-off function $\theta_1\in C^1([0,1])$, such that $0\leq \theta_1\leq 1$, for all $x\in [0,1]$ and 
\begin{equation}\label{theta1ep}
\theta_1=1\ \ \ \text{in}\ \  D_{\varepsilon}\quad \text{and}\quad \theta_1=0\ \ \ \text{in}\ \ (0,\alpha_1)\cup (\alpha_4,1).
\end{equation}
Multiplying \eqref{COMB1} by $-\theta_1 \overline{\phi_j}$, using integration by parts  over $(0,1)$, and  the definition of $b$, $c$ and $\theta_1$, we get 
\begin{equation*}
\begin{array}{l}
\displaystyle
-\int_0^1\theta_1\abs{\la \phi_j}^2dx+\int_0^1\phi'_j\left(\theta_1'\overline{\phi_j}+\theta_1\overline{\phi_j'}\right)dx+\mu_j^2\int_0^1\theta_1\abs{\phi_j}^2dx\\[0.1in]
\displaystyle
+\, \ma{i\la\int_{0}^{1}b\theta_1 \abs{\phi_j}^2dx} + i\la \int_{\alpha_2}^{\alpha_3}c \theta_1\psi_j\overline{\phi_j}dx=-\int_0^1\theta_1F_j^1\overline{\phi_j}dx.
\end{array}
\end{equation*}
Taking the real part and the sum on $j$ from 1 to $\infty$ in the above equation, we get 
\begin{equation}\label{L2-eq2}
\begin{array}{l}
\displaystyle
\sum_{j=1}^{\infty}\int_0^1\theta_1\left[\abs{\phi_j'}^2+\mu_j^2\abs{\phi_j}^2\right]dx=\sum_{j=1}^{\infty}\int_0^1\theta_1\abs{\la \phi_j}^2dx-\sum_{j=1}^{\infty}\Re\left(\int_0^1\theta_1'\phi_j'\overline{\phi_j}dx\right)\\[0.1in]
\displaystyle
-\sum_{j=1}^{\infty}\Re\left(i\la \int_{\alpha_2}^{\alpha_3}c\theta_1\psi_j\overline{\phi_j}dx\right)-\sum_{j=1}^{\infty}\Re\left(\int_0^1\theta_1F_j^1\overline{\phi_j}dx\right).
\end{array}
\end{equation}
Using Cauchy-Schwarz inequality, \eqref{L1-eq2}, the fact that $\|\Phi\|_{\mathcal{H}}=1$ and $\|F\|_{\mathcal{H}}=o(1)$, we get 
\begin{equation}\label{L2-eq3}
\left\{\begin{array}{l}
\displaystyle
\left|\sum_{j=1}^{\infty}\Re\left(\int_0^1\theta_1'\phi_j'\overline{\phi_j}dx\right)\right|\leq \max_{x\in [0,1]}|\theta_1^{'}(x)|\left(\sum_{j=1}^{\infty}\|\phi_j'\|_{L^2(\alpha_1,\alpha_4)}^2\right)^{\frac{1}{2}}\left(\sum_{j=1}^{\infty}\|\phi_j\|^2_{L^2(\alpha_1,\alpha_4)}\right)^{\frac{1}{2}}=\frac{o(1)}{\la^{\frac{\ell}{2}+1}},\\[0.1in]
\displaystyle
\left|\sum_{j=1}^{\infty}\Re\left(i\int_{\alpha_2}^{\alpha_3}c\la\theta_1\psi_j\overline{\phi_j}dx\right)\right|\leq \|c\|_{\infty}\left(\sum_{j=1}^{\infty}\|\la \psi_j\|_{L^2(\alpha_2,\alpha_3)}^2\right)^{\frac{1}{2}}\left(\sum_{j=1}^{\infty}\|\phi_j\|^2_{L^2(\alpha_2,\alpha_3)}\right)^{\frac{1}{2}}=\frac{o(1)}{\la^{\frac{\ell}{2}+1}},\\[0.1in]
\displaystyle
\left|\sum_{j=1}^{\infty}\Re\left(\int_0^1\theta_1F_j^1\overline{\phi_j}dx\right)\right|\leq \left(\sum_{j=1}^{\infty}\|F_j^1\|^2_{L^2(\alpha_1,\alpha_4)}\right)^{\frac{1}{2}}\left(\sum_{j=1}^{\infty}\|\phi_j\|^2_{L^2(\alpha_1,\alpha_4)}\right)^{\frac{1}{2}}=\frac{o(1)}{\la^{\frac{3\ell}{2}}}.
\end{array}
\right.
\end{equation}
Inserting \eqref{L2-eq3} in \eqref{L2-eq2} and  using \eqref{L1-eq2}, we get 
\begin{equation*}
\sum_{j=1}^{\infty}\int_0^1\theta_1\left[\abs{\phi_j'}^2+\mu_j^2\abs{\phi_j}^2\right]dx=o\left(\la^{-\min\left(\frac{3\ell}{2},\frac{\ell}{2}+1\right)}\right).
\end{equation*}
Finally, using the definition of the function $\theta_1$ and the fact that $\ell\geq 1$, we get the desired equation \eqref{L2-eq1}. The proof has been completed.
\end{proof}
%%%%%%%%%%%%%%%%%%%%%%
%%Lemma 3
%%%%%%%%%%%%%%%%%%%%%%
\begin{lemma}\label{LEMMA3}
	{\rm
\ma {The solution $(\phi,v,\psi,z)$ of  system \eqref{Polesteg1} satisfies the following estimation}
\begin{equation}\label{L3-eq1}
\sum_{j=1}^{\infty}\|\la \psi_j\|^2_{L^2(\alpha_2,\alpha_3)}=\abs{a-1}o(\la^{-\frac{\ell}{2}+1})+o\left(\la^{-\min(\frac{\ell}{2},\ell-1)}\right)=\left\{\begin{array}{lll}
o\left(\la^{-\min(\frac{\ell}{2},\ell-1)}\right)&\text{if}&a=1,\\
o(\la^{-\frac{\ell}{2}+1})&\text{if}&a\neq 1.
\end{array}
\right.
\end{equation}}
\end{lemma}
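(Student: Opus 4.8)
The plan is to exploit the only place where $\psi$ enters \eqref{COMB1}--\eqref{COMB2} through a coefficient that is bounded below, namely the coupling term $-i\la c\psi_j$ in \eqref{COMB1} together with $|c|\geq c_0$ on $(\alpha_2,\alpha_3)$. First I would fix a cut-off $\theta\in C^1([0,1])$ with $\theta=1$ on $(\alpha_2,\alpha_3)$ and $\operatorname{supp}\theta\subset D_\eps$, so that the interior estimate of Lemma \ref{LEMMA2} is available on $\operatorname{supp}\theta$. Multiplying \eqref{COMB1} by $\theta\la\overline{\psi_j}$ and integrating over $(0,1)$, the coupling term contributes $-i\int_0^1\theta c|\la\psi_j|^2\,dx$, whose imaginary part produces exactly the quantity $\int_0^1\theta c|\la\psi_j|^2\,dx\geq c_0\|\la\psi_j\|^2_{L^2(\alpha_2,\alpha_3)}$ to be bounded. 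Everything then reduces to estimating the remaining terms $\la^3\phi_j\overline{\psi_j}$, $\phi_j''\,\la\overline{\psi_j}$, $\mu_j^2\phi_j\,\la\overline{\psi_j}$, $b\phi_j\,\la\overline{\psi_j}$ and $F^1_j\,\la\overline{\psi_j}$.

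The subtlety is that, taken alone, $\Im\int\theta\la^3\phi_j\overline{\psi_j}$ is only $o(\la^{1-\ell/2})$ (using $\|\la\phi_j\|_{L^2(\operatorname{supp}\theta)}=o(\la^{-\ell/2})$ from Lemma \ref{INFO1} together with $(\alpha_2,\alpha_3)\subset\operatorname{supp}b$, and $\sum\|\la\psi_j\|^2=O(1)$), which is too weak when $a=1$. To kill it I would simultaneously multiply \eqref{COMB2} by $\theta\la\overline{\phi_j}$, add the two identities and take imaginary parts: the two cubic terms $\Im\int\theta\la^3\phi_j\overline{\psi_j}$ and $\Im\int\theta\la^3\psi_j\overline{\phi_j}$ cancel identically, while the two second-order terms and the two $\mu_j^2$-terms survive only with the prefactor $(a-1)$, after integrating $\int\theta\phi_j''\la\overline{\psi_j}$ and $a\int\theta\psi_j''\la\overline{\phi_j}$ by parts. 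This is the structural reason the final bound carries the factor $|a-1|$ and collapses to the damping-only estimate when $a=1$.

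The leftover $(a-1)$-cross term $(a-1)\la\Im\int\theta\phi_j'\overline{\psi_j'}$ still involves $\psi_j'$, which is controlled only at the energy level; I would eliminate it by integrating by parts once more and substituting $\phi_j''$ from \eqref{COMB1}. This simultaneously (i) cancels the surviving $(a-1)\la\mu_j^2$-cross term, (ii) reproduces $\int\theta c|\la\psi_j|^2$ with coefficient $-(a-1)$, which I move to the left-hand side so that the coefficient of the principal quantity becomes $a\neq0$, and (iii) converts the dangerous derivative term into a $(a-1)\la^3\phi_j\overline{\psi_j}$ contribution plus harmless $\theta'$- and $F^1_j$-terms. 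The right-hand side is then summed over $j$ and estimated by Cauchy--Schwarz using Lemma \ref{INFO1} ($\sum\|\la\sqrt b\phi_j\|^2=o(\la^{-\ell})$), Lemma \ref{LEMMA2}, $\|F\|_{\mathcal H}=o(1)$, and the elementary bound $\sum\|\psi_j\|^2=O(\la^{-2})$ coming from $z_j=i\la\psi_j-\la^{-\ell}f^3_j$ and $\sum\|z_j\|^2\leq\|\Phi\|_{\mathcal H}^2=1$.

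The hard part is precisely the $a\neq1$ bookkeeping. The $\theta'$-commutator and cross terms are a priori of size $|a-1|o(\la^{1/2-\ell/4})$, which is larger than the claim; it is only the extra factor $\la^{-1}$ supplied by $\sum\|\psi_j\|^2=O(\la^{-2})$ (used wherever $\psi_j$ appears undifferentiated) that upgrades them to $|a-1|o(\la^{-1/2-\ell/4})$, and this matches $|a-1|o(\la^{-\ell/2+1})$ because $-\tfrac12-\tfrac\ell4=1-\tfrac\ell2$ exactly at $\ell=6$, the value of $\ell$ forced by $a\neq1$. The damping term $-a\la^2\Re\int\theta b\phi_j\overline{\psi_j}$ gives $o(\la^{-\ell/2})$ and the source terms give $o(\la^{-(\ell-1)})$, which together furnish the $o(\la^{-\min(\ell/2,\ell-1)})$ remainder; dividing by $a$ closes the estimate.
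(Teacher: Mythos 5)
Your proposal is correct and follows essentially the same route as the paper: the paper tests \eqref{COMB1} against $a\theta_2\la\overline{\psi_j}$ and \eqref{COMB2} against $\theta_2\la\overline{\phi_j}$, combines the two identities and takes imaginary parts, so that the $\phi_j'\overline{\psi_j'}$ and $\mu_j^2$ cross terms cancel while the cubic term survives with the factor $(a-1)$, and the coupling produces $a\int_{\alpha_2}^{\alpha_3}c\abs{\la\psi_j}^2$ on the left. Your variant --- unit weights followed by re-substituting $\phi_j''$ from \eqref{COMB1} into the surviving $(a-1)\la\int\theta\phi_j'\overline{\psi_j'}$ term --- is algebraically identical to that choice of weights (since $1+(a-1)=a$), and your final estimates via Lemma \ref{INFO1}, Lemma \ref{LEMMA2}, $\|F\|_{\mathcal H}=o(1)$ and $\sum_j\|\psi_j\|^2=O(\la^{-2})$ reproduce the paper's bounds.
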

\begin{proof}
First, let us fix the following cut-off function $\theta_2\in C^1([0,1])$, such that $0\leq \theta_2\leq 1$, for all $x\in [0,1]$ and 
\begin{equation}\label{theta2}
\theta_2=1\ \ \text{in}\ \ D_{2\varepsilon}:=(\alpha_1+2\varepsilon,\alpha_4-2\varepsilon)\ \ \text{and}\ \ \theta_2=0\ \ \text{in}\ \ (0,\alpha_1+\varepsilon)\cup (\alpha_4-\varepsilon,1),
\end{equation}
with a positive real number $\varepsilon$ small enough such that
$\alpha_1+2\varepsilon\leq\alpha_2<\alpha_3\leq\alpha_4-2\varepsilon$.
Multiplying \eqref{COMB1} by $a\theta_2\la \overline{\psi}_j$, using integration by parts  over $(0,1)$, and  the definition of the functions $b$, $c$ and $\theta_2$, we get 
\begin{equation}\label{L3-eq2}
\begin{array}{l}
\displaystyle
a\la^3\int_0^1\theta_2\phi_j\overline{\psi_j}dx-a\la\int_0^1\theta_2'\phi_j'\overline{\psi_j}dx-a\la\int_0^1\theta_2\phi_j'\overline{\psi_j'}dx-a\la \mu_j^2\int_0^1\theta_2\phi_j\overline{\psi_j}dx\\[0.1in]
\displaystyle
-ia \la^2  \ma{\int_{0}^{1}b\theta_2\phi_j\overline{\psi_j}dx}-ia\int_{\alpha_2}^{\alpha_3}c\abs{\la \psi}^2dx=a\la\int_0^1\theta_2F_j^1\overline{\psi_j}dx.
\end{array}
\end{equation}
Now, multiplying \eqref{COMB2} by $\theta_2\la \overline{\phi}$, using integration by parts over  $(0,1)$, and  the definition of $c$ and $\theta_2$, we get 
\begin{equation}\label{L3-eq3}
\begin{array}{l}
\displaystyle 
\la^3\int_0^1\theta_2\psi_j\overline{\phi_j}dx-a\la\int_0^1\theta_2'\psi_j'\overline{\phi_j}dx-a\la\int_0^1\theta_2\psi_j'\overline{\phi_j'}dx-a\la \mu_j^2\int_0^1\theta_2\psi_j\overline{\phi_j}dx\\[0.1in]
\displaystyle
+i\int_{\alpha_2}^{\alpha_3}c\theta_2\abs{\la \phi_j}^2dx=\la \int_0^1\theta_2F_j^2\overline{\phi_j}dx.
\end{array}
\end{equation}
Subtracting \eqref{L3-eq2} and \eqref{L3-eq3}, and taking the imaginary part, we get 
\begin{equation}\label{L3-eq4}
\begin{array}{l}
\displaystyle 
a\int_{\alpha_2}^{\alpha_3} c\abs{\la \psi_j}^2dx=(a-1)\la^3\Im\left(\int_0^1\theta_2\phi_j\overline{\psi_j}dx\right)\ma{-}a\la\Im\left(\int_0^1\theta_2'\phi_j'\overline{\psi_j}dx\right)\\[0.1in]
\displaystyle
\ma{-}a\la\Im\left(\int_0^1\theta_2'\psi_j'\overline{\phi_j}dx\right)
-a\la^2 \Im\left(i\ma{\int_{0}^{1}b\theta_2\phi_j\overline{\psi_j}dx}\right)+\int_{\alpha_2}^{\alpha_3}c\theta_2\abs{\la \phi_j}^2dx\\[0.1in]
\displaystyle 
-a\la\Im\left(\int_0^1\theta_2F_j^1\overline{\psi_j}dx\right)
-\la\Im\left( \int_0^1\theta_2F_j^2\overline{\phi_j}dx\right).
\end{array}
\end{equation}
Using integration by parts and the definition of $\theta_2$, we get 
\begin{equation}\label{L3-eq5}
-\Im\left(\int_0^1\theta_2'\phi_j'\overline{\psi_j}dx\right)=\Im\left(\int_0^1\theta_2''\phi_j\overline{\psi_j}dx\right)+\Im\left(\int_0^1\theta_2'\phi_j\overline{\psi_j'}dx\right).
\end{equation}
Inserting \eqref{L3-eq5} in \eqref{L3-eq4} and take the sum over $j$, we get 
\begin{equation}\label{L3-eq6}
\begin{array}{l}
\displaystyle 
a\sum_{j=1}^{\infty}\int_{\alpha_2}^{\alpha_3}c\abs{\la \psi_j}^2dx=(a-1)\la^3\sum_{j=1}^{\infty}\Im\left(\int_0^1\theta_2\phi_j\overline{\psi_j}dx\right)\ma{+}a\la\sum_{j=1}^{\infty}\Im\left(\int_0^1\theta_2''\phi_j\overline{\psi_j}dx\right)\\[0.1in]
\displaystyle
\ma{-}2a\la \sum_{j=1}^{\infty}\Im\left(\int_0^1\theta_2'\psi_j'\overline{\phi_j}dx\right)-a\la^2\sum_{j=1}^{\infty}\Im\left(i\ma{\int_{0}^{1}b\theta_2\phi_j\overline{\psi_j}dx}\right)+\sum_{j=1}^{\infty}\int_{\alpha_2}^{\alpha_3}c\abs{\la \phi_j}^2dx\\[0.1in]
\displaystyle
-a\la \sum_{j=1}^{\infty}\Im\left(\int_0^1\theta_2F_j^1\overline{\psi_j}dx\right)-\la\sum_{j=1}^{\infty}\Im\left( \int_0^1\theta_2F_j^2\overline{\phi_j}dx\right).
\end{array}
\end{equation}
Using Cauchy-Schwarz inequality, the definition of the function $\theta_2$, and the fact that $\|\Phi\|_{\mathcal{H}}=1$, $\|F\|_{\mathcal{H}}=o(1)$  and \eqref{L1-eq2}, we get 
\begin{equation*}
\left\{\begin{array}{ll}
\displaystyle
\left|(a-1)\la^3\sum_{j=1}^{\infty}\Im\left(\int_0^1\theta_2\phi_j\overline{\psi_j}dx\right)\right|=\abs{a-1}o(\la^{-\frac{\ell}{2}+1}),&\displaystyle
\left|a\la\sum_{j=1}^{\infty}\Im\left(\int_0^1\theta_2''\phi_j\overline{\psi_j}dx\right)\right|=o\left(\la^{-\left(\frac{\ell}{2}+1\right)}\right),\\[0.25in]
\displaystyle
\left|\la \sum_{j=1}^{\infty}\Im\left(\int_0^1\theta_2'\psi_j'\overline{\phi_j}dx\right)\right|=o\left(\la^{-\frac{\ell}{2}}\right),&\displaystyle
\left|\la^2\sum_{j=1}^{\infty}\Im\left(i\ma{\int_{0}^{1}b\theta_2\phi_j\overline{\psi_j}dx}\right)\right|=o\left(\la^{-\frac{\ell}{2}}\right),\\[0.25in]
\displaystyle
\left|a\la\sum_{j=1}^{\infty}\Im\left(\int_0^1\theta_2F_j^1\overline{\psi_j}dx\right)\right|=o\left(\la^{-\left(\ell-1\right)}\right),&\displaystyle
\left|\la \sum_{j=1}^{\infty}\Im\left( \int_0^1\theta_2F_j^2\overline{\phi_j}dx\right)\right|=o\left(\la^{-\left(\ell-1\right)}\right).
\end{array}
\right.
\end{equation*}
Finally, inserting the above estimation in \eqref{L3-eq6}, using \eqref{L1-eq2}, and the definition of the functions $c$ and $\theta_2$, we get the desired estimate \eqref{L3-eq1}. The proof has been completed. 
\end{proof} 
%%%%%%%%%%%%%%%%%%%%%%
%%Lemma 4
%%%%%%%%%%%%%%%%%%%%%%
\begin{lemma}\label{LEMMA4}
	{\rm
		\ma{
The solution $(\phi,v,\psi,z)$ of  system \eqref{Polesteg1} satisfies the following estimation}
\begin{equation}\label{L4-1eq1}
\begin{array}{rll}
\displaystyle{\sum_{j=1}^{\infty}\left(\|\psi_j'\|^2_{L^2(\omega_{\varepsilon})}+\mu_j^2\|\psi_j\|^2_{L^2(\omega_{\varepsilon})}\right)}&=&\left\{\begin{array}{lll}
o\left(\la^{-\min(\frac{\ell}{2},\ell-1)}\right)&\text{if}&a=1,\\[0.1in]
o(\la^{-\frac{\ell}{2}+1})&\text{if}&a\neq 1.
\end{array}
\right.
\end{array}
\end{equation}
where $\omega_{\varepsilon}:=(\alpha_2+\varepsilon,\alpha_3-\varepsilon)$
with a positive real number $\varepsilon$ small enough such that $\alpha_2+\varepsilon<\alpha_3-\varepsilon$.}
\end{lemma}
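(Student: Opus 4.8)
The plan is to mirror the cut-off/multiplier computation of Lemma \ref{LEMMA2}, but applied now to the second combined equation \eqref{COMB2}, so as to upgrade the bound on $\|\la\psi_j\|_{L^2(\alpha_2,\alpha_3)}$ from Lemma \ref{LEMMA3} into full $H^1$-type control of $\psi_j$ on the smaller set $\omega_\varepsilon$. Under \eqref{CASE1} we have $d=0$, so \eqref{COMB2} reduces to $\la^2\psi_j+a\psi_j''-a\mu_j^2\psi_j+i\la c\phi_j=F_j^2$. First I would fix a cut-off $\theta_3\in C^1([0,1])$ with $0\le\theta_3\le1$, $\theta_3\equiv1$ on $\omega_\varepsilon$ and $\theta_3\equiv0$ outside $(\alpha_2,\alpha_3)$, so that $\text{supp}\,\theta_3'\subset(\alpha_2,\alpha_2+\varepsilon)\cup(\alpha_3-\varepsilon,\alpha_3)$ and, for $\varepsilon$ small, $\omega_\varepsilon\subset(\alpha_2,\alpha_3)\subset D_\varepsilon$. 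Multiplying \eqref{COMB2} by $-\theta_3\overline{\psi_j}$, integrating by parts over $(0,1)$ (the boundary terms vanish since $\theta_3$ is compactly supported in $(0,1)$), taking real parts and summing over $j$ gives an identity whose left-hand side is $a\sum_j\int_0^1\theta_3\big(|\psi_j'|^2+\mu_j^2|\psi_j|^2\big)\,dx$.

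The leading term on the right-hand side is $\sum_j\int_0^1\theta_3|\la\psi_j|^2\,dx\le\sum_j\|\la\psi_j\|_{L^2(\alpha_2,\alpha_3)}^2$, which by Lemma \ref{LEMMA3} is exactly of the order asserted in \eqref{L4-1eq1}, in both the $a=1$ and $a\ne1$ cases. It then suffices to check that the other three terms are no larger. The coupling term $\Re\big(i\la\sum_j\int_0^1\theta_3 c\,\phi_j\overline{\psi_j}\,dx\big)$ I would estimate by Cauchy--Schwarz as $\le\|c\|_\infty\big(\sum_j\|\phi_j\|_{L^2(\alpha_2,\alpha_3)}^2\big)^{1/2}\big(\sum_j\|\la\psi_j\|_{L^2(\alpha_2,\alpha_3)}^2\big)^{1/2}$; since $(\alpha_2,\alpha_3)\subset D_\varepsilon$, Lemma \ref{LEMMA2} bounds the first factor and Lemma \ref{LEMMA3} the second, which yields a strictly smaller power of $\la$. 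The source term $\Re\big(\sum_j\int_0^1\theta_3 F_j^2\overline{\psi_j}\,dx\big)$ is controlled by Cauchy--Schwarz together with the expression \eqref{F1jF2j} for $F_j^2$ and $\|F\|_{\mathcal H}=o(1)$, and is likewise negligible.

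The delicate point, and the main obstacle, is the cross term $-a\,\Re\big(\sum_j\int_0^1\theta_3'\overline{\psi_j}\psi_j'\,dx\big)$, since $\text{supp}\,\theta_3'$ sits in the collar where I have no independent derivative estimate for $\psi_j$. My plan is to split the two factors asymmetrically: bound $\big(\sum_j\|\psi_j'\|^2\big)^{1/2}=O(1)$ crudely from the normalization $\|\Phi\|_{\mathcal H}=1$ in \eqref{Polesteq0}, and use the identity $\big(\sum_j\|\psi_j\|_{L^2(\alpha_2,\alpha_3)}^2\big)^{1/2}=\la^{-1}\big(\sum_j\|\la\psi_j\|_{L^2(\alpha_2,\alpha_3)}^2\big)^{1/2}$ to gain an extra factor $\la^{-1}$ and then invoke Lemma \ref{LEMMA3}. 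A direct computation shows the resulting bound is $o(\la^{-2})$ both when $a=1$ (so $\ell=4$) and when $a\ne1$ (so $\ell=6$), i.e. it matches, but does not exceed, the leading rate; this exact balance is what makes the estimate tight. Collecting the four estimates and using $\theta_3\equiv1$ on $\omega_\varepsilon$, the left-hand side dominates $\sum_j\big(\|\psi_j'\|_{L^2(\omega_\varepsilon)}^2+\mu_j^2\|\psi_j\|_{L^2(\omega_\varepsilon)}^2\big)$, which gives \eqref{L4-1eq1}.
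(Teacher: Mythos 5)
Your proposal is correct and follows essentially the same route as the paper: the same cut-off $\theta_3$ supported in $(\alpha_2,\alpha_3)$ and equal to $1$ on $\omega_\varepsilon$, the same multiplier $-\theta_3\overline{\psi_j}$ applied to \eqref{COMB2}, the leading term controlled by Lemma \ref{LEMMA3}, and the cross term $\int\theta_3'\psi_j'\overline{\psi_j}$ handled by the same asymmetric Cauchy--Schwarz splitting ($O(1)$ on $\sum_j\|\psi_j'\|^2$ and a gain of $\la^{-1}$ via Lemma \ref{LEMMA3}), which indeed lands exactly on the leading rate $o(\la^{-2})$ for $\ell=4,6$. The only cosmetic difference is in how you distribute the factor $\la$ in the coupling term (you pair $\phi_j$ with $\la\psi_j$ and invoke Lemma \ref{LEMMA2}, while the paper pairs $\la\phi_j$ with $\psi_j$ via the dissipation estimate), and both give a negligible contribution.
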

\begin{proof}
let us fix the following cut-off function $\theta_3\in C^1([0,1])$, such that $0\leq \theta_3\leq 1$, for all $x\in [0,1]$ and 
$$
\theta_3=1\ \ \text{in}\ \ \omega_{\varepsilon}\quad \text{and}\quad \theta_3=0\ \ \text{in}\ \ (0,\alpha_2)\cup (\alpha_3,1). 
$$
Multiplying \eqref{COMB2} by $-\theta_3\overline{\psi_j}$, using integration by parts over $(0,1)$, we get
\begin{equation}\label{L4-eq1}
\begin{array}{l}
\displaystyle
-\int_0^1\theta_3\abs{\la \psi_j}^2dx+a\int_0^1\psi_j'\left(\theta_3'\overline{\psi_j}+\theta_3\overline{\psi_j'}\right)dx+a\mu_j^2\int_0^1 \theta_3\abs{\psi_j}^2dx\\[0.in]
\displaystyle
-i\la  \int_{\alpha_2}^{\alpha_3}c\theta_3\phi_j\overline{\psi_j}dx=-\int_0^1\theta_3F_j^2\overline{\psi_j}dx. 
\end{array}
\end{equation}
Taking the sum on $j$ in \eqref{L4-eq1}, we get 
\begin{equation}\label{L4-eq2}
\begin{array}{l}
\displaystyle
a\sum_{j=1}^{\infty}\int_0^1\theta_3\left(\abs{\psi_j'}^2+\mu_j^2\abs{\psi_j}^2\right)dx=\sum_{j=1}^{\infty}\int_0^1\theta_3\abs{\la \psi_j}^2dx-a\sum_{j=1}^{\infty}\int_0^1\theta_3'\psi_j'\overline{\psi_j}dx\\[0.1in]
\displaystyle
+i\la  \sum_{j=1}^{\infty}\int_{\alpha_2}^{\alpha_3}c\theta_3\phi_j\overline{\psi_j}dx-\sum_{j=1}^{\infty}\int_0^1\theta_3F_j^2\overline{\psi_j}dx.
\end{array}
\end{equation}
Using Cauchy-Schwarz inequality, the definition of the function $\theta_3$, the fact that $\|\Phi\|_{\mathcal{H}}=1$, $\|F\|_{\mathcal{H}}=o(1)$,  and \eqref{L1-eq2}, we get 
\begin{equation*}
\left\{\begin{array}{ll}
\displaystyle
\left|\sum_{j=1}^{\infty}\int_0^1\theta_3'\psi_j'\overline{\psi_j}dx\right|=\left\{\begin{array}{lll}
o(\la^{-\min\left(\frac{\ell}{4}+1,\frac{\ell+1}{2}\right)})&\text{if}&a=1,\\[0.1in]
o\left(\la^{-\left(\frac{\ell}{4}+\frac{1}{2}\right)}\right)&\text{if}&a\neq 1,
\end{array}
\right.
,&\displaystyle
\left|i\la \sum_{j=1}^{\infty}\int_{\alpha_2}^{\alpha_3}\theta_3\phi_j\overline{\psi_j}dx\right|=\frac{o(1)}{\la^{\frac{\ell}{2}+1}},\\[0.1in]
\displaystyle
\left|\sum_{j=1}^{\infty}\int_0^1\theta_3F_j^2\overline{\psi_j}dx\right|=o\left(\la^{-\ell}\right).
\end{array}
\right.
\end{equation*}
Finally, inserting the above estimations in \eqref{L4-eq2}, using \eqref{L3-eq1}, the definition of $\theta_3$ and the fact that $\ell\geq 4$, we get the desired result \eqref{L4-eq1}. The proof has been completed. 
\end{proof}
%%%%%%%%%%%%%
%%%%%%%%%%%%%%%%%
\begin{lemma}\label{OUT1}
	{\rm
Let $h\in C^{\infty}\left([0,1]\right)$ such that $h(0)=h(1)=0$. \ma{The solution $(\phi,v,\psi,z)$ of  system \eqref{Polesteg1} satisfies the following estimations}
\begin{eqnarray}
\sum_{j=1}^{\infty}\int_0^1h'\left(\la^2-\mu_j^2\right)\abs{\phi_j}^2dx+\sum_{j=1}^{\infty}\int_0^1h'\abs{\phi_j'}^2dx=o\left(\la^{-\frac{\ell}{2}}\right),\label{outside1}\\[0.1in]
\sum_{j=1}^{\infty}\int_0^1h'\left(\la^2-a\mu_j^2\right)\abs{\psi_j}^2dx+a\sum_{j=1}^{\infty}\int_0^1h'\abs{\psi_j'}^2dx=o\left(\la^{-\frac{\ell}{2}}\right).\label{outside2}
\end{eqnarray}}
\end{lemma}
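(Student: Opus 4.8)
The plan is to prove both estimates by a Rellich--Pohozaev type multiplier applied mode by mode. For \eqref{outside1} I would multiply \eqref{COMB1} by $2h\overline{\phi_j'}$, integrate over $(0,1)$, and take the real part. The three conservative terms integrate by parts cleanly: using $2\Re(\phi_j\overline{\phi_j'})=(|\phi_j|^2)'$ and $2\Re(\phi_j''\overline{\phi_j'})=(|\phi_j'|^2)'$, together with the endpoint condition $h(0)=h(1)=0$ that annihilates every boundary contribution, one obtains the identity
\begin{equation*}
\int_0^1 h'(\la^2-\mu_j^2)|\phi_j|^2\,dx+\int_0^1 h'|\phi_j'|^2\,dx = \Re\int_0^1\bigl(-i\la b\phi_j-i\la c\psi_j\bigr)2h\overline{\phi_j'}\,dx-\Re\int_0^1 F_j^1\,2h\overline{\phi_j'}\,dx.
\end{equation*}
Summing over $j$ reduces the lemma to showing that each term on the right is $o(\la^{-\ell/2})$. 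It is worth stressing that the hypothesis $h(0)=h(1)=0$ is exactly what turns this into an identity with no uncontrolled boundary traces of $\phi_j'$.

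For the damping term I would write $b=\sqrt b\cdot\sqrt b$, bound one factor by $\|b\|_\infty^{1/2}$, and apply Cauchy--Schwarz in $x$ and then in $j$ to get a bound $\lesssim\|b\|_\infty^{1/2}(\sum_j\|\la\sqrt b\,\phi_j\|^2)^{1/2}(\sum_j\|\phi_j'\|^2)^{1/2}=o(\la^{-\ell/2})\cdot O(1)$ by \eqref{CASE2-L1-eq2phipsi} and $\|\Phi\|_{\mathcal H}=1$. The source term is handled similarly: since $\sum_j\|F_j^1\|^2=o(\la^{-2(\ell-1)})$ (the dominant contribution being $\la^{-(\ell-1)}f_j^1$, controlled by $\|F\|_{\mathcal H}=o(1)$), it contributes $o(\la^{-(\ell-1)})=o(\la^{-\ell/2})$ because $\ell\ge 2$.

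The main obstacle is the coupling term $\sum_j\Re\int(-i\la c\psi_j)2h\overline{\phi_j'}\,dx$, which at first glance looks too large to absorb. The key observation is that $c$ is supported in $(\alpha_2,\alpha_3)\subset D_\varepsilon$, so one may invoke Lemma \ref{LEMMA3} for $\la\psi_j$ and Lemma \ref{LEMMA2} for $\phi_j'$ \emph{on that same set} simultaneously: Cauchy--Schwarz gives a bound $\lesssim\|c\|_\infty(\sum_j\|\la\psi_j\|^2_{L^2(\alpha_2,\alpha_3)})^{1/2}(\sum_j\|\phi_j'\|^2_{L^2(D_\varepsilon)})^{1/2}$, and a short case check ($\ell=4$ versus $\ell=6$) shows the product is $o(\la^{-\ell/2})$ in both regimes. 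Estimate \eqref{outside2} is then proved symmetrically by multiplying \eqref{COMB2} by $2h\overline{\psi_j'}$; here $d=0$ under \eqref{CASE1} removes the damping term entirely, and the remaining coupling term $\sum_j\Re\int(i\la c\phi_j)2h\overline{\psi_j'}\,dx$ is controlled by noting that $b\ge b_0>0$ on $(\alpha_2,\alpha_3)$, whence $\sum_j\|\la\phi_j\|^2_{L^2(\alpha_2,\alpha_3)}\le b_0^{-1}\sum_j\|\la\sqrt b\,\phi_j\|^2=o(\la^{-\ell})$ again by Lemma \ref{INFO1}; combined with $\sum_j\|\psi_j'\|^2=O(1)$ this yields the required $o(\la^{-\ell/2})$ and completes the plan.
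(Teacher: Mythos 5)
Your proposal is correct and follows essentially the same route as the paper: the same multiplier $2h\overline{\phi_j'}$ (resp. $2h\overline{\psi_j'}$) applied mode by mode, the same integration by parts exploiting $h(0)=h(1)=0$, and the same auxiliary estimates — Lemma \ref{INFO1} for the damping term and for $\la\phi_j$ on $\mathrm{supp}\,c$, and Lemmas \ref{LEMMA2} and \ref{LEMMA3} for the coupling term in \eqref{outside1}. The only differences are cosmetic (overall sign of the multiplier, and invoking $\ell\ge 2$ rather than $\ell\ge 4$ for the source term, both of which suffice here).
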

\begin{proof}
First, multiplying \eqref{COMB1} by $-2h\overline{\phi_j'}$,  taking the real part and using the definition of $b$ and $c$, we get 
\begin{equation*}
\begin{array}{l}
\displaystyle
-2\la^2\Re\left(\int_0^1h\phi_j\phi_j'dx\right)-2\Re\left(\int_0^1h\phi_j^{''}\phi_j'dx\right)+2\mu_j^2\Re\left(\int_0^1h\phi_j\phi_j'dx\right)+2\la \Re\left(i\la\ma{\int_{0}^{1}bh\phi_j\overline{\phi_j'}dx}\right)\\[0.1in]
\displaystyle
+2\la \Re\left(i\int_{\alpha_2}^{\alpha_3}ch\psi_j\overline{\phi_j'}dx\right)=-2\Re\left(\int_0^1hF_j^1\overline{\phi_j'}dx\right).
\end{array}
\end{equation*}
Using integration by parts in the above equation and taking the sum on $j$, we get 
\begin{equation}\label{OUT1-eq1}
\begin{array}{l}
\displaystyle
\sum_{j=1}^{\infty}\int_0^1h'\left(\la^2-\mu_j^2\right)\abs{\phi_j}^2dx+\sum_{j=1}^{\infty}\int_0^1h'\abs{\phi_j'}^2dx=-2\sum_{j=1}^{\infty} \Re\left(i\ma{\la\int_{0}^{1}bh\phi_j\overline{\phi'_j}dx}\right)\\[0.1in]
\displaystyle
-2\sum_{j=1}^{\infty}\Re\left(i\la\int_{\alpha_2}^{\alpha_3}ch\psi_j\overline{\phi_j'}dx\right)-2\sum_{j=1}^{\infty}\Re\left(\int_0^1hF_j^1\overline{\phi_j'}dx\right).
\end{array}
\end{equation}
Now, using the fact that $\ma{\displaystyle{\sum_{j=1}^{\infty}\|\phi_j'\|^2=O(1)}}$ and \eqref{L1-eq2}, we get 
\begin{equation}\label{OUT1-eq2}
\left|2\sum_{j=1}^{\infty} \Re\left(i\la\int_{0}^{1}bh\phi_j\overline{\phi'_j}dx\right)\right|=o\left(\la^{-\frac{\ell}{2}}\right).
\end{equation}
Using \eqref{L2-eq1}, \eqref{L3-eq1},  the fact that $(\alpha_2,\alpha_3)\subset  D_{\varepsilon}$ and $\ell\geq 4$, we get 
\begin{equation}\label{OUT1-eq3}
\left|\sum_{j=1}^{\infty}\Re\left(i\la\int_{\alpha_2}^{\alpha_3}ch\psi_j\overline{\phi_j'}dx\right)\right|=\left\{\begin{array}{lll}
o\left(\la^{-\left(\frac{\ell}{2}+\frac{1}{2}\right)}\right)&\text{if}&a=1,\\[0.1in]
o\left(\la^{-\frac{\ell}{2}}\right)&\text{if}&a\neq 1. 
\end{array}
\right.
\end{equation}
Using the facts that $\displaystyle{\sum_{j=1}^{\infty}\|\phi_j'\|^2=O(1)}$ and that $\|F\|_{\mathcal{H}}=o(1)$, we get 
\begin{equation}\label{OUT1-eq4}
\left|\sum_{j=1}^{\infty}\Re\left(\int_0^1hF_j^1\overline{\phi_j'}dx\right)\right|=o\left(\la^{-(\ell-1)}\right).
\end{equation}
Inserting \eqref{OUT1-eq2}-\eqref{OUT1-eq4} in \eqref{OUT1-eq1} and using the fact that $\ell\geq 4$, we get \eqref{outside1}. Now, multiplying \eqref{COMB2} by $-2h\overline{\psi_j'}$, taking the real part,  integrating by parts over $(0,1)$,  taking the sum on $j$, and using the definition of $c$, we get 
\begin{equation}\label{OUT1-eq5}
\begin{array}{l}
\displaystyle
\sum_{j=1}^{\infty}\int_0^1h'\left(\la^2-a\mu_j^2\right)\abs{\psi_j}^2dx+a\sum_{j=1}^{\infty}\int_0^1h'\abs{\psi_j'}^2dx=\\[0.1in]
\displaystyle
2\sum_{j=1}^{\infty}\Re\left(i\la\int_{\alpha_2}^{\alpha_3}
ch\phi_j\overline{\psi_j'}\right)-\sum_{j=1}^{\infty}\Re\left(\int_0^1hF_j^2\overline{\psi_j'}dx\right).
\end{array}
\end{equation}
Using   the fact that $\displaystyle{\sum_{j=1}^{\infty}\|\psi_j'\|^2=O(1)}$, the definition of $F_j^2$ and the fact that $\|F\|_{\mathcal{H}}=o(1)$,  we get 
\begin{equation*}
\left|\sum_{j=1}^{\infty}\Re\left(i\lambda\int_{\alpha_2}^{\alpha_3} c\phi_j\overline{\psi_j'}dx\right) \right|=o\left(\la^{-\frac{\ell}{2}}\right)\quad \text{and}\quad \left|\sum_{j=1}^{\infty}\Re\left(\int_0^1hF_j^2\psi_j'dx\right)\right|=o\left(\la^{-\left(\ell-1\right)}\right).
\end{equation*}
Finally, inserting the above estimation and \eqref{OUT1-eq2}  in \eqref{OUT1-eq5} and using the fact that $\ell\geq 4$, we get the desired result \eqref{outside2}. The proof has been completed. 
\end{proof}
%%%%%%%%%%%%%%%%%
\begin{lemma}\label{OUT2}
	{\rm
Let $h\in C^{\infty}\left([0,1]\right)$ such that $h(0)=h(1)=0$. \ma{The solution $(\phi,v,\psi,z)$ of  system \eqref{Polesteg1} satisfies the following estimations}}
\begin{eqnarray}
\sum_{j=1}^{\infty}\int_0^1h'\left(-\la^2+\mu_j^2\right)\abs{\phi_j}^2dx+\sum_{j=1}^{\infty}\int_0^1h'\abs{\phi_j'}^2dx+\sum_{j=1}^{\infty}\int_0^1h^{''}\phi_j'\overline{\phi_j}dx=o\left(\la^{-(\frac{\ell}{2}+1)}\right),\label{1outside1}\\[0.1in]
\sum_{j=1}^{\infty}\int_0^1h'\left(-\la^2+a\mu_j^2\right)\abs{\psi_j}^2dx+a\sum_{j=1}^{\infty}\int_0^1h'\abs{\psi_j'}^2dx+a\sum_{j=1}^{\infty}\int_0^1h^{''}\psi_j'\overline{\psi_j}dx=o\left(\la^{-(\frac{\ell}{2}+1)}\right).\label{2outside2}
\end{eqnarray}
\end{lemma}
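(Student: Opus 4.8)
The plan is to establish \eqref{1outside1} and \eqref{2outside2} by the \emph{energy} multiplier $h'\overline{\phi_j}$ (resp.\ $h'\overline{\psi_j}$), which is the natural companion to the Rellich-type multiplier $-2h\overline{\phi_j'}$ used in Lemma \ref{OUT1}. First I would multiply \eqref{COMB1} by $h'\overline{\phi_j}$ and integrate over $(0,1)$. The only term needing an integration by parts is $\int_0^1 h'\phi_j''\overline{\phi_j}\,dx$; since $\phi_j(0)=\phi_j(1)=0$ the boundary contribution $[\,h'\phi_j'\overline{\phi_j}\,]_0^1$ vanishes, leaving $-\int_0^1 h''\phi_j'\overline{\phi_j}\,dx-\int_0^1 h'|\phi_j'|^2\,dx$ (this is precisely what generates the $h''\phi_j'\overline{\phi_j}$ term appearing in the statement). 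Taking the real part annihilates the damping term $-i\la\int_0^1 bh'|\phi_j|^2\,dx$, which is purely imaginary because $b,h',|\phi_j|^2$ and $\la$ are real, and summing over $j$ reproduces, up to an overall sign, the left-hand side of \eqref{1outside1}, set equal to a coupling integral $\sum_j\Re\big(i\la\int_{\alpha_2}^{\alpha_3}ch'\psi_j\overline{\phi_j}\,dx\big)$ and a source integral $\sum_j\Re\big(\int_0^1 h'F_j^1\overline{\phi_j}\,dx\big)$. It then remains to show that each of these two quantities is $o(\la^{-(\frac{\ell}{2}+1)})$.

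The source term is routine: by Cauchy--Schwarz, using $\sum_j\|\phi_j\|^2=O(1)$ (from $\|\Phi\|_{\mathcal H}=1$) together with the bound $\sum_j\|F_j^1\|^2=o(\la^{-2(\ell-1)})$ read off from \eqref{F1jF2j} and $\|F\|_{\mathcal H}=o(1)$, this term is $o(\la^{-(\ell-1)})$, which is contained in $o(\la^{-(\frac{\ell}{2}+1)})$ since $\ell\geq 4$. The coupling integral is the main obstacle, and the crucial observation is that the estimate on $\phi_j$ over $(\alpha_2,\alpha_3)$ furnished by Lemma \ref{LEMMA2}, namely \eqref{L2-eq1}, is too weak here. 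Instead I would exploit the dissipation estimate \eqref{CASE2-L1-eq2phipsi}: under \eqref{CASE1} one has $(\alpha_2,\alpha_3)\subset(\alpha_1,\alpha_4)$ with $b\geq b_0>0$ there by \eqref{b,c}, so the bound $\sum_j\|\la\sqrt b\,\phi_j\|^2=o(\la^{-\ell})$ upgrades to the sharp estimate $\sum_j\|\la\phi_j\|^2_{L^2(\alpha_2,\alpha_3)}=o(\la^{-\ell})$.

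Writing $\la\psi_j\overline{\phi_j}=(\la\psi_j)\overline{\phi_j}$ and applying Cauchy--Schwarz, the coupling term is dominated by $\|c\|_\infty\|h'\|_\infty\big(\sum_j\|\la\psi_j\|^2_{L^2(\alpha_2,\alpha_3)}\big)^{1/2}\big(\sum_j\|\phi_j\|^2_{L^2(\alpha_2,\alpha_3)}\big)^{1/2}$, where $\sum_j\|\phi_j\|^2_{L^2(\alpha_2,\alpha_3)}=\la^{-2}o(\la^{-\ell})$ by the previous step and $\sum_j\|\la\psi_j\|^2_{L^2(\alpha_2,\alpha_3)}$ is controlled by \eqref{L3-eq1}. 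This produces $o(\la^{-4})$ when $a=1$ ($\ell=4$) and $o(\la^{-5})$ when $a\neq 1$ ($\ell=6$), both strictly inside $o(\la^{-(\frac{\ell}{2}+1)})$. Combining the source and coupling bounds yields \eqref{1outside1}.

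Finally, \eqref{2outside2} follows by the identical argument applied to \eqref{COMB2} with the multiplier $h'\overline{\psi_j}$: the term $-i\la\int_0^1 dh'|\psi_j|^2\,dx$ again has vanishing real part (and $d=0$ under \eqref{CASE1}), the constant $a$ simply factors out of the terms arising from $a\psi_j''$, and the coupling integral $\sum_j\Re\big(i\la\int_{\alpha_2}^{\alpha_3}ch'\phi_j\overline{\psi_j}\,dx\big)$ is estimated exactly as above, now pairing the sharp dissipation bound $\sum_j\|\la\phi_j\|^2_{L^2(\alpha_2,\alpha_3)}=o(\la^{-\ell})$ against \eqref{L3-eq1}. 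I expect the only genuine subtlety to be precisely this recognition that the coupling term must be closed through the dissipation inequality \eqref{CASE2-L1-eq2phipsi} rather than through the interior energy bound \eqref{L2-eq1}; the remainder is bookkeeping of powers of $\la$ via Lemmas \ref{INFO1} and \ref{LEMMA3}.
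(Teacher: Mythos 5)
Your proposal follows essentially the same route as the paper: the multiplier is $h'\overline{\phi_j}$ (the paper uses $-h'\overline{\phi_j}$, an immaterial sign), the integration by parts producing the $h''\phi_j'\overline{\phi_j}$ term is identical, and the coupling integral is closed exactly as in the paper, namely by pairing $\sum_j\|\la\psi_j\|^2_{L^2(\alpha_2,\alpha_3)}$ from Lemma \ref{LEMMA3} against $\sum_j\|\phi_j\|^2_{L^2(\alpha_2,\alpha_3)}=o(\la^{-\ell-2})$, the latter coming from the dissipation estimate of Lemma \ref{INFO1} together with $b\ge b_0$ on $(\alpha_1,\alpha_4)\supset(\alpha_2,\alpha_3)$ --- precisely the point you single out as the only subtlety, and precisely what the paper does. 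The one place where you deviate is in disposing of the damping term $i\la\int_0^1 bh'\abs{\phi_j}^2dx$ by taking real parts: since $\sum_j\int_0^1h''\phi_j'\overline{\phi_j}\,dx$ in \eqref{1outside1} is genuinely complex, taking real parts only proves that the \emph{real part} of the stated left-hand side is $o(\la^{-(\frac{\ell}{2}+1)})$, which is weaker than the lemma as written. The paper instead keeps the full complex identity \eqref{OUT2-eq1} and bounds the purely imaginary damping term directly by $o(\la^{-(\ell+1)})$ via \eqref{L1-eq2}; you should do the same, which costs nothing since $\bigl|\la\sum_j\int_0^1 b h'\abs{\phi_j}^2dx\bigr|\le \|h'\|_\infty\,\la^{-1}\sum_j\|\la\sqrt{b}\,\phi_j\|^2=o(\la^{-(\ell+1)})$, and then the full statement rather than its real part follows. (For the downstream use in Lemma \ref{OUT-GLOB} the real-part version would in fact suffice, since the $h''$ terms are bounded in modulus separately there, but the lemma claims more.) Everything else, including the observation that the source term is $o(\la^{-(\ell-1)})\subset o(\la^{-(\frac{\ell}{2}+1)})$ because $\ell\ge 4$, and the remark that $d=0$ under \eqref{CASE1} so that \eqref{2outside2} carries no damping term at all, is correct and matches the paper.
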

\begin{proof}
Multiplying \eqref{COMB1} by $-h'\overline{\phi_j}$, using integration by parts over $(0,1)$ and  taking the sum on $j$,  we get 
\begin{equation}\label{OUT2-eq1}
\begin{array}{l}
\displaystyle
\sum_{j=1}^{\infty}\int_0^1\left(-\la^2+\mu_j^2\right)h'\abs{\phi_j}^2dx+\sum_{j=1}^{\infty}\int_0^1h'\abs{\phi_j'}^2dx+\sum_{j=1}^{\infty}\int_0^1h^{''}\phi_j'\overline{\phi_j}dx\\[0.1in]
\displaystyle
+i\la \ma{ \sum_{j=1}^{\infty}\int_{0}^{1}bh'\abs{\phi_j}^2dx}+i\la \sum_{j=1}^{\infty}\int_{\alpha_2}^{\alpha_3}ch'\psi_j\overline{\phi_j}dx=-\sum_{j=1}^{\infty}\int_0^1h'F_j^1\overline{\phi_j}dx.
\end{array}
\end{equation}
Now, using \eqref{L1-eq2}, \eqref{L3-eq1}, the definition of $F_j^1$, and the fact that $\|F\|_{
\HH}=o(1)$ and $\displaystyle{\sum_{j=1}^{\infty}\|\la \phi_j\|^2=O(1)}$, we get 
\begin{equation*}
\left|\la \ma{\sum_{j=1}^{\infty}\int_{0}^{1}bh'\abs{\phi_j}^2dx}\right|=o\left(\la^{-(\ell+1)}\right),\ \ \left|\la\sum_{j=1}^{\infty}\int_{\alpha_2}^{\alpha_3}ch'\psi_j\overline{\phi_j}dx\right|=o\left(\la^{-(\frac{\ell}{2}+1)}\right),\ \ \left|\sum_{j=1}^{\infty}\int_0^1h'F_j^1\overline{\phi_j}dx\right|=o\left(\la^{-\ell}\right).
\end{equation*}
Inserting the above estimations in \eqref{OUT2-eq1} and using the fact that $\ell\geq 4$, we get \eqref{1outside1}. In the same way, multiplying \eqref{COMB2} by $-h'\overline{\psi_j}$,  using integration by parts over $(0,1)$ and using the definition of $c$, we get 
\begin{equation}\label{OUT2-eq2}
\begin{array}{l}
\displaystyle
\sum_{j=1}^{\infty}\int_0^1h'\left(-\la^2+a\mu_j^2\right)\abs{\psi_j}^2dx+a\sum_{j=1}^{\infty}\int_0^1h'\abs{\psi_j'}^2dx+a\sum_{j=1}^{\infty}\int_0^1h^{''}\psi_j'\overline{\psi_j}dx\\[0.1in]
\displaystyle
-i\la \sum_{j=1}^{\infty}\int_{\alpha_2}^{\alpha_3}ch'\phi_j\overline{\psi_j}dx=-\sum_{j=1}^{\infty}\int_0^1h'F_j^2\overline{\psi_j}dx.
\end{array}
\end{equation}
Using \eqref{L3-eq1}, the definition of $F_j^2$, and the fact that $\|F\|_{\mathcal{H}}=o(1)$ and $\displaystyle{\sum_{j=1}^{\infty}\|\la \psi_j\|=O(1)}$, we get 
\begin{equation*}
\left|i\la\sum_{j=1}^{\infty}\int_{\alpha_2}^{\alpha_3}ch'\phi_j\overline{\psi_j}dx\right|=o\left(\la^{-(\frac{\ell}{2}+1)}\right),\quad \left|\sum_{j=1}^{\infty}\int_0^1h'F_j^2\overline{\psi_j}dx\right|=o\left(\la^{-\ell}\right).
\end{equation*}
Finally, inserting the above estimations in \eqref{OUT2-eq2} and using the fact that $\ell\geq 4$, we get \eqref{2outside2}. The proof has been completed.
\end{proof}
%%%%%%%%%%%%%%%%%
\begin{lemma}\label{OUT-GLOB}
	{\rm
\ma{The solution $(\phi,v,\psi,z)$ of  system \eqref{Polesteg1} satisfies the following estimation}
\begin{equation}\label{OG-eq1}
\|\Phi\|^2_{\HH}=\left\{\begin{array}{lll}
o\left(\la^{-\frac{\ell}{2}+2}\right)&\text{if}&a=1,\\[0.1in]
o\left(\la^{-\frac{\ell}{2}+3}\right)&\text{if}&a\neq 1.
\end{array}
\right. 
\end{equation}}
\end{lemma}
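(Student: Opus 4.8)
The plan is to bound the full energy
$\|\Phi\|^2_{\HH}=\sum_{j}\big(\|\phi_j'\|^2+\mu_j^2\|\phi_j\|^2+\|v_j\|^2+a\|\psi_j'\|^2+a\mu_j^2\|\psi_j\|^2+\|z_j\|^2\big)$
by treating the $\phi$-block and the $\psi$-block separately, and inside each block by splitting $(0,1)$ into the region already handled by the interior Lemmas \ref{LEMMA2} and \ref{LEMMA4} (together with the dissipation estimates of Lemma \ref{INFO1}) and the two uncontrolled collars adjacent to the endpoints. Since \eqref{eq1-pol} and \eqref{eq3-pol} give $v_j=i\la\phi_j-\la^{-\ell}f^1_j$ and $z_j=i\la\psi_j-\la^{-\ell}f^3_j$, we have $\|v_j\|^2=\la^2\|\phi_j\|^2+o(\cdots)$ and $\|z_j\|^2=\la^2\|\psi_j\|^2+o(\cdots)$, so everything reduces to estimating $\sum_j\|\phi_j'\|^2$, $\sum_j\mu_j^2\|\phi_j\|^2$, $\sum_j\la^2\|\phi_j\|^2$ over all of $(0,1)$, and the analogous $\psi$-quantities.

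For $\phi$ I would choose in Lemmas \ref{OUT1} and \ref{OUT2} a multiplier $h\in C^\infty([0,1])$ with $h(0)=h(1)=0$, $h'\equiv1$ on the collars $(0,\alpha_1+\varepsilon)\cup(\alpha_4-\varepsilon,1)$ and with the whole transition of $h'$ (hence $\operatorname{supp}h''$) contained in $D_\varepsilon$, then add \eqref{outside1} and \eqref{1outside1}. The indefinite weights $\pm(\la^2-\mu_j^2)|\phi_j|^2$ cancel, leaving $2\sum_j\int h'|\phi_j'|^2$ and the cross term $\sum_j\int h''\phi_j'\overline{\phi_j}$; since $h''$ is supported in $D_\varepsilon$, where both $\phi_j$ and $\phi_j'$ are tiny by Lemma \ref{LEMMA2} and \eqref{CASE2-L1-eq2phipsi}, that cross term and the transition contribution are negligible. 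This yields $\sum_j\int_{(0,\alpha_1+\varepsilon)\cup(\alpha_4-\varepsilon,1)}|\phi_j'|^2=o(\la^{-\frac{\ell}{2}})$, and with Lemma \ref{LEMMA2} on $D_\varepsilon$, $\sum_j\|\phi_j'\|^2_{L^2(0,1)}=o(\la^{-\frac{\ell}{2}})$.

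The key step that avoids any propagative/evanescent frequency splitting is to exploit the Dirichlet conditions $\phi_j(0)=\phi_j(1)=0$: Poincar\'e's inequality on each collar upgrades the $L^2$-bound on $\phi_j'$ to $\sum_j\|\phi_j\|^2_{L^2(0,1)}=o(\la^{-\frac{\ell}{2}})$ (the part on $D_\varepsilon\subset(\alpha_1,\alpha_4)$ being tiny by \eqref{CASE2-L1-eq2phipsi}). Multiplying by $\la^2$ gives $\sum_j\la^2\|\phi_j\|^2=o(\la^{-\frac{\ell}{2}+2})$, hence $\sum_j\|v_j\|^2=o(\la^{-\frac{\ell}{2}+2})$. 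To recover the tangential term I would use the global identity obtained by multiplying \eqref{COMB1} by $\overline{\phi_j}$ and summing: its real part reads $\sum_j(\la^2-\mu_j^2)\|\phi_j\|^2=\sum_j\|\phi_j'\|^2+R$, with $R$ negligible (it couples $\phi$ to $\psi$ only on $(\alpha_2,\alpha_3)$, where $\phi$ is damped-tiny, and involves $F^1_j$). As the right-hand side is $o(\la^{-\frac{\ell}{2}})$, this forces $\sum_j\mu_j^2\|\phi_j\|^2=\sum_j\la^2\|\phi_j\|^2+o(\la^{-\frac{\ell}{2}})=o(\la^{-\frac{\ell}{2}+2})$, so the $\phi$-block is $o(\la^{-\frac{\ell}{2}+2})$ regardless of $a$.

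The $\psi$-block is where the argument is most delicate and is the main obstacle, because $\psi$ carries no direct damping. I would repeat the scheme using \eqref{outside2}, \eqref{2outside2} and a multiplier whose transition now sits inside $\omega_\varepsilon$, where $\psi_j$, $\psi_j'$ and $\la\psi_j$ are all controlled by Lemmas \ref{LEMMA4} and \ref{LEMMA3}. The smallness of $\psi$ comes solely from the coupling, through Lemma \ref{LEMMA3} on $(\alpha_2,\alpha_3)$ and Lemma \ref{LEMMA4} on the thin window $\omega_\varepsilon$, and these estimates degrade by one power of $\la$ when $a\neq1$. Propagating the resulting collar bound $\sum_j\int_{C_\psi}|\psi_j'|^2$ — which is $o(\la^{-\frac{\ell}{2}})$ if $a=1$ but only $o(\la^{-\frac{\ell}{2}+1})$ if $a\neq1$ — through Poincar\'e and then through the $\psi$-analogue of the global identity obtained from \eqref{COMB2} gives a $\psi$-block equal to $o(\la^{-\frac{\ell}{2}+2})$ for $a=1$ and $o(\la^{-\frac{\ell}{2}+3})$ for $a\neq1$. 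Combining the two blocks yields \eqref{OG-eq1}; the extra power of $\la$ lost in the different-speed case is precisely the $|a-1|$ loss already present in Lemmas \ref{LEMMA3}–\ref{LEMMA4}.
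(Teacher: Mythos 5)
Your proposal is correct and follows essentially the same route as the paper: adding the identities of Lemmas \ref{OUT1} and \ref{OUT2} to cancel the indefinite weights $\pm(\la^2-\mu_j^2)$, choosing $h$ equal to $x$ near $0$ and $x-1$ near $1$ with its transition (hence $\operatorname{supp}h''$) inside the region controlled by Lemmas \ref{LEMMA2} and \ref{LEMMA4}, then using Poincar\'e for the $\la^2\|\phi_j\|^2$, $\la^2\|\psi_j\|^2$ terms and the relation $(\la^2-\mu_j^2)\|\phi_j\|^2\approx\|\phi_j'\|^2$ to recover the tangential terms. The only cosmetic differences are that the paper uses a single multiplier (with transition in $\omega_\varepsilon$) for both blocks simultaneously, while you use separate multipliers and an explicit global Green identity in place of re-invoking \eqref{outside1}--\eqref{outside2}; these are equivalent.
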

\begin{proof}
First, adding \eqref{outside1}, \eqref{1outside1}, \eqref{outside2} and  \eqref{2outside2} we get 
\begin{equation}\label{OG-eq2}
2\sum_{j=1}^{\infty}\int_0^1h'\abs{\phi_j'}^2dx+2a\sum_{j=1}^{\infty}\int_0^1h'\abs{\psi_j'}^2dx+\sum_{j=1}^{\infty}\int_0^1h^{''}\phi_j'\overline{\phi_j}dx+a\sum_{j=1}^{\infty}\int_0^1h^{''}\psi_j'\overline{\psi_j}dx=o\left(\la^{-\frac{\ell}{2}}\right).
\end{equation}
Now, take $h(x)=x\theta_4+(x-1)\theta_5$, such that 
\begin{equation}\label{t4t5}
\theta_4:=\left\{\begin{array}{lll}
1&\text{in}&(0,\alpha_2+\varepsilon),\\[0.1in]
0&\text{in}&(\alpha_3-\varepsilon,1,)\\[0.1in]
0\leq \theta_4\leq 1&\text{in}&\omega_{\varepsilon},
\end{array}
\right.\quad \text{and}\quad \theta_5:=\left\{\begin{array}{lll}
0&\text{in}&(0,\alpha_2+\varepsilon),\\[0.1in]
1&\text{in}&(\alpha_3-\varepsilon,1),\\[0.1in]
0\leq \theta_5\leq 1&\text{in}&\omega_{\varepsilon}.
\end{array}
\right.
\end{equation}
It is easy to see that 
\begin{equation}\label{h'h''}
h'(x)=x\theta_4'+\theta_4+(x-1)\theta_5'+\theta_5\quad \text{and}\quad h^{''}=x\theta_4^{''}+2\theta_4'+(x-1)\theta_5^{''}+2\theta_5'.
\end{equation}
Using \eqref{h'h''},  the definition of $\theta_4$ and $\theta_5$,   using the fact that $\displaystyle{\sum_{j=1}^{\infty}\|\phi_j'\|^2}=O(1)$, \eqref{L2-eq1},  
 \eqref{L3-eq1},  and \eqref{L4-1eq1}, we get 
\begin{equation}\label{OG-eq3}
\left|\sum_{j=1}^{\infty}\int_0^1h^{''}\phi_j'\overline{\phi_j}dx\right|=o\left(\la^{-\left(\frac{\ell}{2}+1\right)}\right)\quad \text{and}\quad \left|\sum_{j=1}^{\infty}\int_0^1h^{''}\psi_j'\overline{\psi_j}dx\right|=\left\{\begin{array}{lll}
o\left(\la^{-\left(\frac{\ell}{2}+1\right)}\right)&\text{if}&a=1,\\[0.1in]
o\left(\la^{-\frac{\ell}{2}+\frac{1}{2}}\right)&\text{if}&a\neq 1.
\end{array}
\right.
\end{equation}
Inserting \eqref{OG-eq3} in \eqref{OG-eq2}, we get
\begin{equation}\label{OG-eq4}
\sum_{j=1}^{\infty}\int_0^1h'\abs{\phi_j'}^2dx+a\sum_{j=1}^{\infty}\int_0^1h'\abs{\psi_j'}^2dx=o\left(\la^{-\frac{\ell}{2}}\right).
\end{equation}
Setting $r=x\theta_4'+(x-1)\theta_5'$, and  using \eqref{t4t5}, \eqref{L2-eq1} and \eqref{L4-1eq1}, we get 
\begin{equation*} 
\sum_{j=1}^{\infty}\int_0^1r \abs{\phi_j'}^2dx=o\left(\la^{-(\frac{\ell}{2}+1)}\right)\quad \text{and}\quad \sum_{j=1}^{\infty}\int_0^1 r \abs{\psi_j'}^2dx=\left\{\begin{array}{lll}
o\left(\la^{-\frac{\ell}{2}}\right)&\text{if}&a=1,\\[0.1in]
o\left(\la^{-\frac{\ell}{2}+1}\right)&\text{if}&a\neq 1.
\end{array}
\right.
\end{equation*}
These estimations,  \eqref{OG-eq4} and  \eqref{h'h''} yield
\begin{equation}\label{OG-1eq5}
\sum_{j=1}^{\infty}\int_0^1(\theta_4+\theta_5)\abs{\phi_j'}^2dx=o\left(\la^{-\frac{\ell}{2}}\right)\quad \text{and}\quad \sum_{j=1}^{\infty}\int_0^1(\theta_4+\theta_5)\abs{\psi_j'}^2dx=\left\{\begin{array}{lll}
o\left(\la^{-\frac{\ell}{2}}\right)&\text{if}&a=1,\\[0.1in]
o\left(\la^{-\frac{\ell}{2}+1}\right)&\text{if}&a\neq 1.
\end{array}
\right.
\end{equation}
Using \eqref{t4t5}, \eqref{L2-eq1}, \eqref{L4-1eq1}, \eqref{OG-1eq5} and the fact that $\ell\geq 4$,  we get 
\begin{equation}\label{OG-eq5}
\sum_{j=1}^{\infty}\int_0^1\abs{\phi_j'}^2dx=o\left(\la^{-\frac{\ell}{2}}\right)\quad \text{and}\quad \sum_{j=1}^{\infty}\int_0^1\abs{\psi_j'}^2dx=\left\{\begin{array}{lll}
o\left(\la^{-\frac{\ell}{2}}\right)&\text{if}&a=1,\\[0.1in]
o\left(\la^{-\frac{\ell}{2}+1}\right)&\text{if}&a\neq 1.
\end{array}
\right.
\end{equation}
Using Poincar\'e inequality, we get 
\begin{equation}\label{OG-eq6}
\sum_{j=1}^{\infty}\int_0^1\abs{\la \phi_j}^2dx=o\left(\la^{-\frac{\ell}{2}+2}\right)\quad \text{and}\quad \sum_{j=1}^{\infty}\int_0^1\abs{\la \psi_j}^2dx=\left\{\begin{array}{lll}
o\left(\la^{-\frac{\ell}{2}+2}\right)&\text{if}&a=1,\\[0.1in]
o\left(\la^{-\frac{\ell}{2}+3}\right)&\text{if}&a\neq 1.
\end{array}
\right.
\end{equation}
Using \eqref{L2-eq1}, \eqref{L4-eq1}, \eqref{outside1}, \eqref{outside2} and \eqref{h'h''},  we get 
\begin{equation}\label{OG-eq7}
\sum_{j=1}^{\infty}\mu_j^2\int_0^1\abs{\phi_j}^2dx=o\left(\la^{-\frac{\ell}{2}+2}\right)\quad \text{and}\quad \sum_{j=1}^{\infty}\mu_j^2\int_0^1\abs{\psi_j}^2dx=\left\{\begin{array}{lll}
o\left(\la^{-\frac{\ell}{2}+2}\right)&\text{if}&a=1,\\[0.1in]
o\left(\la^{-\frac{\ell}{2}+3}\right)&\text{if}&a\neq 1.
\end{array}
\right.
\end{equation}
Finally, from \eqref{OG-eq5}-\eqref{OG-eq7}. we obtain \eqref{OUT-GLOB}. The proof has been completed. 
\end{proof}

\noindent \textbf{Proof of Theorem \ref{G-P}}. Take $\ell=4$ for $a=1$ and $\ell=6$ for $a\neq 1$ in Lemma \ref{OUT-GLOB}, we get $\|\Phi\|_{\mathcal{H}}=o(1)$, which contradicts  $\|\Phi\|_{\mathcal{H}}=1$ in \eqref{Polesteq0}. This implies that 
\begin{equation*}
\limsup_{\la\in \R,\ |\la|\to \infty}\frac{1}{\abs{\la}^{\ell}}\|(i\la I-\mathcal{A})^{-1}\|_{\mathcal{L}(\mathcal{H})}<\infty. 
\end{equation*}
with $\ell$ defined by \eqref{ell}.
Finally, according to \ma{Theorem \ref{GENN},} we obtain the desired result. The proof  has been completed. \hfill $\square$
\subsection{Proof of Theorem \ref{1G-P}} The proof of Theorem \ref{1G-P} is divided into several Lemmas. \ma{In these following Lemmas, we assume that \eqref{CASE2} holds}.
%%%%%%%%%%%%% First Lemma

%%%%%%%%%%%%% Second Lemma
\begin{lemma}\label{Case2-LEMMA2}
	{\rm
 \ma{The solution $(\phi,v,\psi,z)$ of  system \eqref{Polesteg1} satisfies the following estimations}
\begin{equation}\label{Case2-L2-eq1}
\sum_{j=1}^{\infty}\left(\|\phi_j'\|^2_{L^2(D_{\varepsilon})}+\mu_j^2\|\phi_j\|^2_{L^2(D_{\varepsilon})}\right)=o\left(\la^{-2}\right)\quad \text{and}\quad \sum_{j=1}^{\infty}\left(\|\psi_j'\|^2_{L^2(D_{\varepsilon})}+\mu_j^2\|\psi_j\|^2_{L^2(D_{\varepsilon})}\right)=o\left(\la^{-2}\right).
\end{equation}
{\color{black}where $D_{\varepsilon}$ is defined in Lemma \ref{LEMMA2}.}}
\end{lemma}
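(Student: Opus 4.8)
The plan is to reproduce verbatim the argument of Lemma \ref{LEMMA2} with $\ell=2$, but now exploiting the symmetry of the damping in \eqref{CASE2}: since both $b\ge b_0>0$ and $d\ge d_0>0$ hold on $(\alpha_1,\alpha_4)$, the dissipation estimates of Lemma \ref{INFO1} directly control \emph{both} components on the damping region. First I would fix the same cut-off function $\theta_1\in C^1([0,1])$ as in \eqref{theta1ep}, namely $0\le\theta_1\le1$ with $\theta_1=1$ on $D_\varepsilon$ and $\theta_1=0$ on $(0,\alpha_1)\cup(\alpha_4,1)$, so that $\operatorname{supp}\theta_1\subset[\alpha_1,\alpha_4]$, where $b$ and $d$ are both uniformly positive.

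For the $\phi$-estimate, I would multiply \eqref{COMB1} by $-\theta_1\overline{\phi_j}$, integrate by parts over $(0,1)$, take the real part and sum over $j$, reaching the same identity as in Lemma \ref{LEMMA2}:
\begin{equation*}
\begin{array}{l}
\displaystyle
\sum_{j=1}^{\infty}\int_0^1\theta_1\big[|\phi_j'|^2+\mu_j^2|\phi_j|^2\big]\,dx
=\sum_{j=1}^{\infty}\int_0^1\theta_1|\la\phi_j|^2\,dx
-\sum_{j=1}^{\infty}\Re\Big(\int_0^1\theta_1'\phi_j'\overline{\phi_j}\,dx\Big)\\[0.1in]
\displaystyle
-\sum_{j=1}^{\infty}\Re\Big(i\la\int_{\alpha_2}^{\alpha_3}c\theta_1\psi_j\overline{\phi_j}\,dx\Big)
-\sum_{j=1}^{\infty}\Re\Big(\int_0^1\theta_1 F_j^1\overline{\phi_j}\,dx\Big).
\end{array}
\end{equation*}
Since $\operatorname{supp}\theta_1\subset(\alpha_1,\alpha_4)$ where $b\ge b_0$, the leading term is bounded by $b_0^{-1}\sum_j\|\la\sqrt{b}\phi_j\|^2=o(\la^{-2})$ from \eqref{CASE2-L1-eq2phipsi} with $\ell=2$; the $\theta_1'$ term is handled by Cauchy--Schwarz together with $\sum_j\|\phi_j'\|^2=O(1)$ and $\sum_j\|\phi_j\|^2_{L^2(\alpha_1,\alpha_4)}=o(\la^{-4})$ (again via \eqref{CASE2-L1-eq2phipsi}); and the $F_j^1$ term is $o(\la^{-3})$ by the definition \eqref{F1jF2j} of $F_j^1$ and $\|F\|_{\HH}=o(1)$. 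The only place where \eqref{CASE2} genuinely changes the computation is the coupling term: in CASE1 one had to invoke Lemma \ref{LEMMA3} to control $\|\la\psi_j\|_{L^2(\alpha_2,\alpha_3)}$, whereas here $(\alpha_2,\alpha_3)\subset(\alpha_1,\alpha_4)$, so $\sum_j\|\la\psi_j\|^2_{L^2(\alpha_2,\alpha_3)}\le d_0^{-1}\sum_j\|\la\sqrt{d}\psi_j\|^2=o(\la^{-2})$ directly from Lemma \ref{INFO1}, making this term $o(\la^{-3})$. Collecting these yields the first estimate in \eqref{Case2-L2-eq1}.

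For the $\psi$-estimate I would run the mirror-image argument: multiply \eqref{COMB2} by $-\theta_1\overline{\psi_j}$, integrate by parts, take the real part and sum, obtaining
\begin{equation*}
\begin{array}{l}
\displaystyle
a\sum_{j=1}^{\infty}\int_0^1\theta_1\big[|\psi_j'|^2+\mu_j^2|\psi_j|^2\big]\,dx
=\sum_{j=1}^{\infty}\int_0^1\theta_1|\la\psi_j|^2\,dx
-a\sum_{j=1}^{\infty}\Re\Big(\int_0^1\theta_1'\psi_j'\overline{\psi_j}\,dx\Big)\\[0.1in]
\displaystyle
+\sum_{j=1}^{\infty}\Re\Big(i\la\int_{\alpha_2}^{\alpha_3}c\theta_1\phi_j\overline{\psi_j}\,dx\Big)
-\sum_{j=1}^{\infty}\Re\Big(\int_0^1\theta_1 F_j^2\overline{\psi_j}\,dx\Big).
\end{array}
\end{equation*}
Each term is estimated exactly as above with $(b,\phi)$ replaced by $(d,\psi)$: the leading term is $\le d_0^{-1}\sum_j\|\la\sqrt{d}\psi_j\|^2=o(\la^{-2})$, the coupling term is $o(\la^{-3})$ using $\sum_j\|\la\phi_j\|^2_{L^2(\alpha_2,\alpha_3)}=o(\la^{-2})$ from \eqref{CASE2-L1-eq2phipsi}, and the $\theta_1'$ and source terms are $o(\la^{-2})$ or smaller. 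Dividing by $a>0$ and restricting to $D_\varepsilon$, where $\theta_1\equiv1$, gives the second estimate in \eqref{Case2-L2-eq1}.

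I do not expect a genuine obstacle here: the whole point of \eqref{CASE2} is that the viscous damping acts on both equations over the \emph{same} interval $(\alpha_1,\alpha_4)$, so the energy-transfer Lemmas \ref{LEMMA3} and \ref{LEMMA4} required in the CASE1 analysis become superfluous, and both components are controlled symmetrically and directly from the dissipation relation \eqref{L1-eq2}. The only care needed is bookkeeping the powers of $\la$ with $\ell=2$ and verifying that every coupling and source term is strictly lower order than the target $o(\la^{-2})$.
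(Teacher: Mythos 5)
Your proposal is correct and follows essentially the same route as the paper, whose proof of this lemma is just the one-line instruction to repeat the argument of Lemma \ref{LEMMA2} for both \eqref{COMB1} and \eqref{COMB2} with the cut-off $\theta_1$, using the dissipation estimates of Lemma \ref{INFO1} under \eqref{CASE2}. In particular, you correctly identify the one point where \eqref{CASE2} matters: since $d\geq d_0>0$ on $(\alpha_1,\alpha_4)\supset(\alpha_2,\alpha_3)$, the coupling terms are absorbed directly by $\sum_j\|\la\sqrt{d}\psi_j\|^2=o(\la^{-\ell})$ and $\sum_j\|\la\sqrt{b}\phi_j\|^2=o(\la^{-\ell})$, so Lemma \ref{LEMMA3} is not needed, exactly as in the paper.
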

\begin{proof}
First, multiplying \eqref{COMB1} and \eqref{COMB2} by $-\theta_1\overline{\phi_j}$ and $-\theta_1\overline{\psi_j}$  respectively {\color{black}(where $\theta_1$ is defined in \eqref{theta1ep})}, using  integration by parts over $(0,1)$,  and \eqref{CASE2-L1-eq2} and \eqref{CASE2}, and the same arguments than in the proof  of Lemma \ref{LEMMA2}, we get \eqref{Case2-L2-eq1}. The proof has been completed. 
\end{proof}
\begin{lemma}\label{CASE2-OUT1}
	{\rm
 Let $h\in C^{\infty}\left([0,1]\right)$ such that $h(0)=h(1)=0$. \ma{The solution $(\phi,v,\psi,z)$ of  system \eqref{Polesteg1} satisfies the following estimations}
\begin{eqnarray}
\sum_{j=1}^{\infty}\int_0^1h'\left(\la^2-\mu_j^2\right)\abs{\phi_j}^2dx+\sum_{j=1}^{\infty}\int_0^1h'\abs{\phi_j'}^2dx=o\left(\la^{-2}\right),\label{CASE2-outside1}\\[0.1in]
\sum_{j=1}^{\infty}\int_0^1h'\left(\la^2-a\mu_j^2\right)\abs{\psi_j}^2dx+a\sum_{j=1}^{\infty}\int_0^1h'\abs{\psi_j'}^2dx=o\left(\la^{-2}\right).\label{CASE2-outside2}
\end{eqnarray}}
\end{lemma}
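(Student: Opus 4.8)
The plan is to reproduce, \emph{mutatis mutandis}, the multiplier computation of Lemma \ref{OUT1}, but now exploiting the stronger information available under \eqref{CASE2}. Concretely, I would multiply \eqref{COMB1} by $-2h\overline{\phi_j'}$ and \eqref{COMB2} by $-2h\overline{\psi_j'}$, take real parts, integrate by parts over $(0,1)$ (the boundary contributions drop since $h(0)=h(1)=0$, and $\Re(\phi_j\overline{\phi_j'})=\tfrac12(|\phi_j|^2)'$, $\Re(\phi_j''\overline{\phi_j'})=\tfrac12(|\phi_j'|^2)'$), and then sum over $j$. This produces exactly the left-hand sides of \eqref{CASE2-outside1}--\eqref{CASE2-outside2}, with right-hand sides consisting of three kinds of terms: a damping term $2\Re(i\la\sum_j\int_0^1 bh\phi_j\overline{\phi_j'}\,dx)$ (resp. the $d$-analogue with $\psi$), a coupling term in $c$, and a source term in $F_j^1$ (resp. $F_j^2$). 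The whole task is to show that each of these is $o(\la^{-2})$; note that the direct Lemma \ref{OUT1} bounds, read off at $\ell=2$, only give $o(\la^{-1})$, so two of the three terms must be improved by one power of $\la$.

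For the source term I would not estimate $\int hF_j^1\overline{\phi_j'}$ directly; instead I would integrate by parts once more to move the derivative onto $hF_j^1$, isolating the dominant piece $\la^{-1}f_j^1$ of $F_j^1$ (see \eqref{F1jF2j}). Combining this with the a priori bound $\sum_j\|\phi_j\|^2=O(\la^{-2})$ — which follows from $\sum_j\|v_j\|^2\le\|\Phi\|_{\HH}^2=1$ together with $\la\phi_j=-i(v_j+\la^{-2}f_j^1)$ coming from \eqref{eq1-pol} — and $\|F\|_{\HH}=o(1)$, upgrades the source term to $o(\la^{-2})$. For the coupling term I would use that $c$ is supported in $(\alpha_2,\alpha_3)\subset D_\varepsilon$ and split $i\la c h\,\psi_j\overline{\phi_j'}$ as $(\la\psi_j)$ against $(\phi_j')$. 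The decisive gain under \eqref{CASE2} is that $\psi$ is now itself damped: since $d\ge d_0$ on $(\alpha_2,\alpha_3)$, Lemma \ref{INFO1} gives $\sum_j\|\la\psi_j\|^2_{L^2(\alpha_2,\alpha_3)}\le d_0^{-1}\sum_j\|\la\sqrt d\,\psi_j\|^2=o(\la^{-2})$, while Lemma \ref{Case2-LEMMA2} gives $\sum_j\|\phi_j'\|^2_{L^2(D_\varepsilon)}=o(\la^{-2})$; Cauchy--Schwarz then yields $o(\la^{-1})\cdot o(\la^{-1})=o(\la^{-2})$. The $c$-coupling term in the $\psi$-identity is symmetric, using $b\ge b_0$ on $(\alpha_2,\alpha_3)$ to make $\la\phi_j$ small against the interior estimate for $\psi_j'$.

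The main obstacle is the damping term $2\Re(i\la\sum_j\int_0^1 bh\phi_j\overline{\phi_j'}\,dx)$ (and its $d$-analogue). Substituting $\la\phi_j=-i(v_j+\la^{-2}f_j^1)$ reduces it, up to an $o(\la^{-2})$ remainder, to $2\sum_j\Re\int bh\,v_j\overline{\phi_j'}\,dx$, and Cauchy--Schwarz bounds this by $C\big(\sum_j\|\sqrt b\,v_j\|^2\big)^{1/2}\big(\sum_j\|\phi_j'\|^2\big)^{1/2}=o(\la^{-1})\cdot O(1)$, which is exactly one power of $\la$ short. To close the gap I must exploit that $\phi_j'$ is itself small on the damping set, not merely $O(1)$: on the bulk $D_\varepsilon$ the interior estimate $\sum_j\|\phi_j'\|^2_{L^2(D_\varepsilon)}=o(\la^{-2})$ from Lemma \ref{Case2-LEMMA2} pairs with $\sum_j\|\sqrt b\,v_j\|^2=o(\la^{-2})$ to give $o(\la^{-2})$ there. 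The truly delicate point is the thin boundary layer $(\alpha_1,\alpha_1+\varepsilon)\cup(\alpha_4-\varepsilon,\alpha_4)$, where $b\ge b_0$ holds but the interior derivative estimate is not directly available; on that layer I would lean on the pointwise smallness $\sum_j\|\la\phi_j\|^2_{L^2(\alpha_1,\alpha_4)}\le b_0^{-1}\sum_j\|\la\sqrt b\,\phi_j\|^2=o(\la^{-2})$ furnished by Lemma \ref{INFO1}, supplemented if necessary by a localized bootstrap of $\phi_j'$ from \eqref{COMB1} (controlling $\phi_j''$ in terms of the already-small quantities $\la\phi_j$, $\mu_j^2\phi_j$, and $F_j^1$), so as not to lose the power of $\la$ at the endpoints.

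I expect this boundary-layer control of $\phi_j'$ (equivalently of $\sqrt b\,\phi_j'$) over the \emph{full} support of the damping to be the technical heart of the argument, since it is precisely what separates the $o(\la^{-2})$ claimed here from the weaker $o(\la^{-\ell/2})$ obtained in the singly-damped setting of Lemma \ref{OUT1}. Once \eqref{CASE2-outside1} is established in this way, \eqref{CASE2-outside2} follows by the identical computation with $(\phi,b,F^1)$ replaced by $(\psi,d,F^2)$ and the roles of the interior estimates for $\phi_j'$ and $\psi_j'$ interchanged, completing the proof.
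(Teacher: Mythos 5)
Your overall strategy coincides with the paper's (the paper's own proof of this lemma is a one-line appeal to ``the same technique'' as Lemma \ref{OUT1}), and your treatment of the coupling term and of the source term is correct; in particular the extra integration by parts on $\int_0^1 hF_j^1\overline{\phi_j'}\,dx$, combined with $\sum_j\|\phi_j\|^2=O(\la^{-2})$, is genuinely needed here, since the bound $o(\la^{-(\ell-1)})$ used in Lemma \ref{OUT1} only gives $o(\la^{-1})$ at $\ell=2$. The gap is exactly where you locate it, and neither of the two remedies you offer closes it. The damping term $2\sum_j\Re\bigl(i\la\int_0^1 bh\phi_j\overline{\phi_j'}\,dx\bigr)$ reduces, after Cauchy--Schwarz with the weight $\sqrt b$, to $o(\la^{-1})\cdot\bigl(\sum_j\|\sqrt b\,\phi_j'\|^2\bigr)^{1/2}$, so you need $\sum_j\|\sqrt b\,\phi_j'\|^2=O(\la^{-2})$ over the \emph{whole} set where $b>0$, whereas Lemma \ref{Case2-LEMMA2} provides this only on $D_\eps$. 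On the remaining set (the layers $(\alpha_1,\alpha_1+\eps)\cup(\alpha_4-\eps,\alpha_4)$, and also $(0,\alpha_1)\cup(\alpha_4,1)$, where $b$ is merely nonnegative but may be positive) your first remedy, the pointwise bound $\sum_j\|\la\phi_j\|^2_{L^2(\alpha_1,\alpha_4)}=o(\la^{-2})$ paired with $\sum_j\|\phi_j'\|^2=O(1)$, returns precisely the $o(\la^{-1})$ you are trying to beat. Your second remedy, a localized bootstrap of $\phi_j'$ via $\phi_j''$ from \eqref{COMB1}, also fails: the interpolation $\|\phi_j'\|^2_{L^2(J)}\lesssim\|\phi_j''\|_{L^2(J)}\|\phi_j\|_{L^2(J)}+|J|^{-2}\|\phi_j\|^2_{L^2(J)}$ requires a bound on $\sum_j\|(\mu_j^2-\la^2)\phi_j\|_{L^2(J)}\|\phi_j\|_{L^2(J)}$, and on an interval $J\not\subset D_\eps$ the only available control of $\sum_j\mu_j^2\|\phi_j\|^2_{L^2(J)}$ is the $O(1)$ coming from $\|\Phi\|_{\HH}=1$, so the bootstrap again yields $\sum_j\|\phi_j'\|^2_{L^2(J)}=O(1)$ and no power of $\la$ is gained. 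The same obstruction affects the $d$-damping term in \eqref{CASE2-outside2}.

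This term cannot simply be conceded at the weaker order: if \eqref{CASE2-outside1}--\eqref{CASE2-outside2} are only established with $o(\la^{-1})$ on the right-hand side, then the argument of Lemma \ref{CASE2-OUT-GLOB} produces $\sum_j\|\phi_j'\|^2=o(\la^{-1})$ and, after Poincar\'e, $\sum_j\|\la\phi_j\|^2=o(\la)$ rather than $o(1)$, so the final contradiction is lost. To be fair, the paper's proof does not address this point either: a literal transcription of the estimates of Lemma \ref{OUT1} at $\ell=2$ gives only $o(\la^{-\ell/2})=o(\la^{-1})$ for the damping term, so your more careful accounting is closer to what an actual proof requires. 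But as written your proposal does not establish the stated $o(\la^{-2})$ bound; the missing ingredient is an estimate of $\phi_j'$ (weighted by $\sqrt b$) of order $O(\la^{-1})$ in $L^2$ on all of $\{b>0\}$, not merely on the interior region $D_\eps$.
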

\begin{proof}
Using the same technique  than the one of the proof of  Lemma \ref{OUT1}, we get the proof.
\end{proof}
\begin{lemma}\label{CASE2-OUT2}
	{\rm
Let $h\in C^{\infty}\left([0,1]\right)$ such that $h(0)=h(1)=0$. \ma{The solution $(\phi,v,\psi,z)$ of  system \eqref{Polesteg1} satisfies the following estimations}
\begin{eqnarray}
\sum_{j=1}^{\infty}\int_0^1h'\left(-\la^2+\mu_j^2\right)\abs{\phi_j}^2dx+\sum_{j=1}^{\infty}\int_0^1h'\abs{\phi_j'}^2dx+\sum_{j=1}^{\infty}\int_0^1h^{''}\phi_j'\overline{\phi_j}dx=o\left(\la^{-2}\right),\label{CASE2-1outside1}\\[0.1in]
\sum_{j=1}^{\infty}\int_0^1h'\left(-\la^2+a\mu_j^2\right)\abs{\psi_j}^2dx+a\sum_{j=1}^{\infty}\int_0^1h'\abs{\psi_j'}^2dx+a\sum_{j=1}^{\infty}\int_0^1h^{''}\psi_j'\overline{\psi_j}dx=o\left(\la^{-2}\right).\label{CASE2-2outside2}
\end{eqnarray}}
\end{lemma}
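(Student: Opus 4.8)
The plan is to follow the proof of Lemma \ref{OUT2} verbatim in its structure, the only---and decisive---difference being that in the present doubly-damped setting \eqref{CASE2} the auxiliary estimate \eqref{L3-eq1} on $\sum_j\|\la\psi_j\|^2_{L^2(\alpha_2,\alpha_3)}$ is no longer needed: the required control of $\la\psi_j$ (and of $\la\phi_j$) on the coupling zone comes directly from the two dissipation estimates of Lemma \ref{INFO1}.

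First I would multiply \eqref{COMB1} by $-h'\overline{\phi_j}$, integrate by parts over $(0,1)$, and sum over $j$. Because $h(0)=h(1)=0$ and $\phi_j(0)=\phi_j(1)=0$, all boundary terms vanish and the second-derivative term yields $\int_0^1 h'\abs{\phi_j'}^2dx+\int_0^1 h''\phi_j'\overline{\phi_j}dx$; the resulting identity is exactly \eqref{OUT2-eq1}, that is, the left-hand side of \eqref{CASE2-1outside1} together with the three remainders
\[
i\la\sum_{j=1}^{\infty}\int_0^1 bh'\abs{\phi_j}^2dx,\qquad i\la\sum_{j=1}^{\infty}\int_{\alpha_2}^{\alpha_3}ch'\psi_j\overline{\phi_j}dx,\qquad -\sum_{j=1}^{\infty}\int_0^1 h'F_j^1\overline{\phi_j}dx.
\]
It then remains to show that each remainder is $o(\la^{-2})$ (recall $\ell=2$ here). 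For the damping term, writing $\la b\abs{\phi_j}^2=\la^{-1}\abs{\la\sqrt{b}\,\phi_j}^2$ and invoking the first estimate in \eqref{CASE2-L1-eq2phipsi} gives $o(\la^{-3})$. For the coupling term, Cauchy--Schwarz bounds it by $\|c\|_\infty\|h'\|_\infty\big(\sum_j\|\la\psi_j\|^2_{L^2(\alpha_2,\alpha_3)}\big)^{1/2}\big(\sum_j\|\phi_j\|^2_{L^2(\alpha_2,\alpha_3)}\big)^{1/2}$; since $d\ge d_0>0$ on $(\alpha_2,\alpha_3)$ the second estimate in \eqref{CASE2-L1-eq2phipsi} makes the first factor $o(\la^{-1})$, while $(\alpha_2,\alpha_3)\subset D_\varepsilon$, $\mu_j\ge\mu_1>0$ and \eqref{Case2-L2-eq1} make the second factor $o(\la^{-1})$, so the product is $o(\la^{-2})$. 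For the source term, the definition \eqref{F1jF2j} with $\|F\|_{\mathcal H}=o(1)$ yields $\big(\sum_j\|F_j^1\|^2\big)^{1/2}=o(\la^{-1})$, and $\sum_j\|\la\phi_j\|^2=O(1)$ (immediate from \eqref{eq1-pol} and $\|\Phi\|_{\mathcal H}=1$) yields $\big(\sum_j\|\phi_j\|^2\big)^{1/2}=O(\la^{-1})$, whence $o(\la^{-2})$. Collecting these proves \eqref{CASE2-1outside1}.

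The estimate \eqref{CASE2-2outside2} is obtained identically, starting from \eqref{COMB2} multiplied by $-h'\overline{\psi_j}$. The only structural change is that the damping remainder now carries $d$ rather than $b$ and is controlled by the second estimate in \eqref{CASE2-L1-eq2phipsi}, while the coupling remainder $\la\int_{\alpha_2}^{\alpha_3} ch'\phi_j\overline{\psi_j}dx$ is treated symmetrically, pairing $\la\phi_j$ (controlled through the $b$-dissipation, first estimate in \eqref{CASE2-L1-eq2phipsi}) with $\psi_j$ (controlled through \eqref{Case2-L2-eq1}). I do not expect a genuine analytic obstacle here, precisely because both equations are damped; the only point requiring care is the exponent bookkeeping---verifying that every remainder is $o(\la^{-2})$ rather than merely $o(\la^{-1})$---which works out exactly because the two-sided dissipation of Lemma \ref{INFO1} simultaneously controls $\la\phi_j$ and $\la\psi_j$ on the coupling region $(\alpha_2,\alpha_3)$, the role played in the undamped case by the separate Lemma \ref{LEMMA3}.
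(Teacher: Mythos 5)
Your proposal is correct and follows exactly the route the paper intends: the paper's own proof of this lemma is a one-line reference to repeating the multiplier argument of the earlier lemma, and your write-up supplies the details, correctly identifying that under \eqref{CASE2} the role of Lemma \ref{LEMMA3} is taken over by the second estimate in \eqref{CASE2-L1-eq2phipsi}, since $d\ge d_0>0$ on $(\alpha_2,\alpha_3)$. The exponent bookkeeping you carry out (each remainder being $o(\la^{-3})$, $o(\la^{-2})$ and $o(\la^{-2})$ respectively, with $\ell=2$) is accurate.
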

\begin{proof}
Using the same technique than the one of the proof of Lemma \ref{OUT1}, we get the proof.
\end{proof}
\begin{lemma}\label{CASE2-OUT-GLOB}
{\rm
\ma{The solution $(\phi,v,\psi,z)$ of  system \eqref{Polesteg1} satisfies the following estimation}
\begin{equation}\label{CASE2-OG-eq1}
\|\Phi\|^2_{\HH}=o\left(1\right).
\end{equation}}
\end{lemma}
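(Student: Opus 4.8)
The plan is to mirror the argument of Lemma \ref{OUT-GLOB}, which becomes considerably simpler under \eqref{CASE2} because the damping acts on both equations over the common interval $(\alpha_1,\alpha_4)$; consequently the estimate \eqref{Case2-L2-eq1} of Lemma \ref{Case2-LEMMA2} is fully symmetric in $\phi$ and $\psi$, and every intermediate bound stays at the single order $o(\la^{-2})$, with no distinction between $a=1$ and $a\neq 1$. First I would add the four identities \eqref{CASE2-outside1}, \eqref{CASE2-1outside1}, \eqref{CASE2-outside2} and \eqref{CASE2-2outside2}. The $(\la^2-\mu_j^2)$ and $(-\la^2+\mu_j^2)$ terms cancel pairwise, and likewise for the $\psi$-block, which leaves
\[
2\sum_{j=1}^\infty\int_0^1 h'\abs{\phi_j'}^2dx+2a\sum_{j=1}^\infty\int_0^1 h'\abs{\psi_j'}^2dx+\sum_{j=1}^\infty\int_0^1 h''\phi_j'\overline{\phi_j}dx+a\sum_{j=1}^\infty\int_0^1 h''\psi_j'\overline{\psi_j}dx=o\left(\la^{-2}\right).
\]

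Next I would take the very same multiplier $h(x)=x\theta_4+(x-1)\theta_5$ as in \eqref{t4t5}, so that $h'$ and $h''$ are given by \eqref{h'h''} and, crucially, both $h''$ and the transition part $r:=x\theta_4'+(x-1)\theta_5'$ of $h'$ are supported inside $\omega_\varepsilon\subset D_\varepsilon$. On $D_\varepsilon$ the bound \eqref{Case2-L2-eq1} controls simultaneously $\sum_j\|\phi_j'\|^2_{L^2(D_\varepsilon)}$, $\sum_j\|\psi_j'\|^2_{L^2(D_\varepsilon)}$ and, after dividing by $\mu_1^2$, also $\sum_j\|\phi_j\|^2_{L^2(D_\varepsilon)}$, $\sum_j\|\psi_j\|^2_{L^2(D_\varepsilon)}$, all of order $o(\la^{-2})$. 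Hence by Cauchy--Schwarz the two $h''$-integrals and the two $r$-weighted integrals are each $o(\la^{-2})$, and the displayed identity reduces to $\sum_j\int_0^1(\theta_4+\theta_5)\abs{\phi_j'}^2dx+a\sum_j\int_0^1(\theta_4+\theta_5)\abs{\psi_j'}^2dx=o(\la^{-2})$.

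Since $\theta_4+\theta_5\equiv 1$ on $(0,1)\setminus\omega_\varepsilon$, this yields $\sum_j\int_{(0,1)\setminus\omega_\varepsilon}\abs{\phi_j'}^2dx=o(\la^{-2})$ and its $\psi$-analogue; combining with the $D_\varepsilon$-bound on $\omega_\varepsilon\subset D_\varepsilon$ gives $\sum_j\int_0^1\abs{\phi_j'}^2dx=o(\la^{-2})$ and $\sum_j\int_0^1\abs{\psi_j'}^2dx=o(\la^{-2})$. Poincar\'e's inequality then upgrades these to $\sum_j\int_0^1\abs{\la\phi_j}^2dx=o(1)$ and $\sum_j\int_0^1\abs{\la\psi_j}^2dx=o(1)$. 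For the transverse terms I would return to \eqref{CASE2-outside1} and \eqref{CASE2-outside2}, where $h'=1$ off $\omega_\varepsilon$: solving for $\sum_j\int_0^1 h'\mu_j^2\abs{\phi_j}^2dx$ in terms of the already-controlled $\la^2$- and derivative-integrals shows it is $o(1)$, and subtracting the $o(\la^{-2})$ contribution carried on $\omega_\varepsilon$ by \eqref{Case2-L2-eq1} gives $\sum_j\mu_j^2\int_0^1\abs{\phi_j}^2dx=o(1)$, with the same conclusion for $\psi$. Recalling $v_j=i\la\phi_j-\la^{-2}f^1_j$ and $z_j=i\la\psi_j-\la^{-2}f^3_j$ to handle $\sum_j\|v_j\|^2$ and $\sum_j\|z_j\|^2$, all six pieces of $\|\Phi\|^2_{\HH}$ are $o(1)$, which is \eqref{CASE2-OG-eq1}.

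The computations are entirely routine; the only thing to watch is the bookkeeping on the transition region $\omega_\varepsilon$, where the multiplier weights $h'$ and $h''$ are not sign-definite, so one must fall back on \eqref{Case2-L2-eq1} rather than on positivity. The genuine simplification compared with Theorem \ref{G-P} is that here both $\phi$ and $\psi$ are damped on $(\alpha_1,\alpha_4)\supset(\alpha_2,\alpha_3)$, so no energy needs to be transferred through the coupling coefficient $c$; this is precisely why every estimate stays at order $o(\la^{-2})$ regardless of the propagation speeds, and why the resulting rate is the single value $t^{-1}$, i.e. $\ell=2$.
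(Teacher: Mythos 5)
Your proof is correct and follows essentially the same route as the paper's, which simply adds the four identities of Lemmas \ref{CASE2-OUT1} and \ref{CASE2-OUT2}, takes the multiplier $h(x)=x\theta_4+(x-1)\theta_5$ from \eqref{t4t5}, and invokes the technique of Lemma \ref{OUT-GLOB}; you have merely written out explicitly the steps the paper leaves implicit. Your bookkeeping on the transition region $\omega_\varepsilon$, the recovery of the $\mu_j^2$-terms from \eqref{CASE2-outside1}--\eqref{CASE2-outside2}, and the treatment of $v_j$ and $z_j$ via \eqref{eq1-pol} and \eqref{eq3-pol} all coincide with the argument of Lemma \ref{OUT-GLOB} specialized to $\ell=2$, now symmetric in $\phi$ and $\psi$ thanks to \eqref{Case2-L2-eq1}.
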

\begin{proof}
Adding \eqref{CASE2-outside1}, \eqref{CASE2-1outside1}, \eqref{CASE2-outside2} and  \eqref{CASE2-2outside2} and  taking $h(x)=x\theta_4+(x-1)\theta_5$,  with $\theta_4$ and $\theta_5$  defined in \eqref{t4t5},
and using the same technique than the one of the proof of Lemma \ref{OUT-GLOB}, we get \eqref{CASE2-OG-eq1}. 
\end{proof}
%%%%%%%%%%%%%%

\noindent\textbf{Proof of Theorem \ref{1G-P}.}  Lemma \ref{CASE2-OUT-GLOB} contradicts $\|\Phi\|_{\mathcal{H}}=1$ in \eqref{Polesteq0}. This implies that 
\begin{equation*}
\limsup_{\la\in \R,\ |\la|\to \infty}\frac{1}{\abs{\la}^{2}}\|(i\la I-\mathcal{A})^{-1}\|_{\mathcal{L}(\mathcal{H})}<\infty. 
\end{equation*}
Finally, according to \ma{Theorem \ref{GENN}}, we obtain the desired result. The proof has been completed.\hfill $\square$\\\linebreak
%%%%%%%%%%%%%
%\section{Examples of domains}
Some cylindrical domains $\Omega$ with particular choices for the support of  $b,  c$, and  $d$ are illustrated in Figures \ref{Fig1},  \ref{Fig2}, and  \ref{Fig3}.
\begin{figure}[h!]
	\begin{center}
		\begin{tikzpicture}
			\draw
			(0,0) coordinate (A1)-- (5,0) coordinate (B1)--(5,5) coordinate (C1)--(0,5) coordinate (D1)-- cycle;
			\draw[-,red]
			(1,0) coordinate (A2)-- (4,0) coordinate (B2)--(4,5) coordinate (C2)--(1,5) coordinate (D2)-- cycle;
			%\fill[color=red!60,fill=red!5, very thick]  (A2)--(B2)--(C2)--(D2)--cycle;
			\draw[color=red,decorate,decoration={brace,amplitude=10pt}]
			(1,5.25) -- (4,5.25)  ;
			\node[blue] at (2.5,5.85){\color{red} $\text{supp} \, b\times \overline{\omega}$};
			
			\draw[-,blue]
			(2,0) coordinate (A3)-- (3,0) coordinate (B3)--(3,5) coordinate (C3)--(2,5) coordinate (D3)-- cycle;
			\draw [color=blue,decorate,decoration={brace,amplitude=10pt,mirror,raise=4pt},yshift=0pt]
			(2,0) -- (3,0) ;
			\node[] at (2.6,-0.75){\color{blue}$\text{supp} \, c\times \overline{\omega}$};
			
			\draw
			(7,0) coordinate (A4)-- (12,0) coordinate (B4)--(12,5) coordinate (C4)--(7,5) coordinate (D4)-- cycle;
			
			\draw[-,red]
			(7.75,0) coordinate (A5)-- (10.5,0) coordinate (B5)-- (10.5,5) coordinate (C5)--(7.75,5) coordinate (D5)--cycle;
			\draw[color=red,decorate,decoration={brace,amplitude=10pt}]
			(7.75,5.7) -- (10.5,5.7) ;
			\node[red,above] at (9.15,6){\color{red} $\text{supp}\, b\times \overline{\omega}$};
			%\fill[color=red!60,fill=red!5, very thick]  (A5)--(B5)--(C5)--(D5)--cycle;
			
			\draw[-,cyan]
			(8.5,0) coordinate (A6)-- (11.25,0) coordinate (B6)--(11.25,5) coordinate (C6)--(8.5,5) coordinate (D6)-- cycle;
			\draw [color=cyan,decorate,decoration={brace,amplitude=10pt,mirror,raise=4pt},yshift=0pt]
			(8.5,-0.15) -- (11.25,-0.15) ;
			\node[below] at (10,-0.65){\color{cyan} $\text{supp} \, d\times \overline{\omega}$};
			%\fill[color=black!60,fill=black!5, very thick]  (A6)--(B6)--(C6)--(D6)--cycle;
			
			\draw[-,blue]
			(9,0) coordinate (A6)-- (10,0) coordinate (B6)--(10,5) coordinate (C6)--(9,5) coordinate (D6)-- cycle;
			\draw[color=blue,decorate,decoration={brace,amplitude=5pt}]
			(9,5.1) -- (10,5.1) ;
			%\node[] at (9.15,5.75){\color{red} $\omega_c$};

			\node[blue,above] at (9.5,5.18){\scalebox{1}{$\text{supp}\, c \times \overline{\omega}$}};

		\end{tikzpicture}
	\end{center}
	\caption{Locally coupled wave equations with direct/indirect  localized viscous damping on a square.}\label{Fig1}
\end{figure}
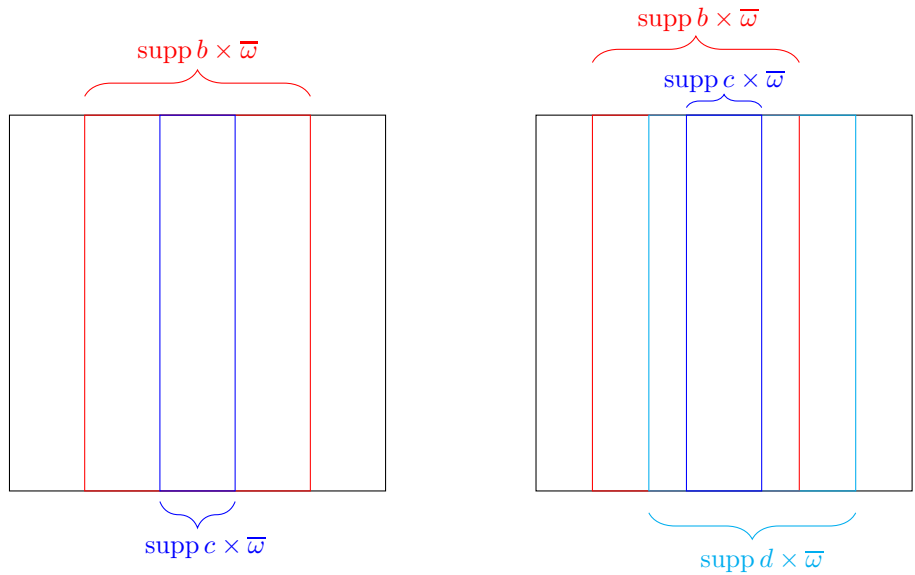

\newpage
\begin{figure}[h!]
	\begin{center}
		\begin{tikzpicture}
			%\draw[->](0,0,0)--(7,0,0);
			%\draw[->](0,0,0)--(0,7,0);
			%\draw[->](0,0,0)--(0,0,7);
			
			\draw[dashed,red](1,0,0)--(5,0,0);
			\draw[dashed,red](1,0,6)--(5,0,6);
			\draw[dashed,red](1,0,6)--(1,0,0);
			\draw[dashed,red](5,0,6)--(5,0,0);
			\draw[dashed,red](5,0,6)--(5,0,0);
			\draw[dashed,red](5,0,0)--(5,6,0);
			\draw[dashed,red](5,6,0)--(1,6,0);
			\draw[dashed,red](1,6,0)--(1,0,0);
			\draw[-,red](1,6,0)--(1,6,6);
			\draw[-,red](1,6,6)--(5,6,6);
			\draw[-,red](5,6,6)--(5,6,0);
			\draw[-,red](1,6,6)--(1,0,6);
			\draw[-,red](5,6,6)--(5,0,6);
			\draw[dashed,red](2,0,0)--(4,0,0);
			%\draw 
			%(1,0,0) coordinate (A1)--(1,6,0) coordinate (B1)--(1,6,6) coordinate (C1)--(1,0,6) coordinate (D1)--cycle;
			
			%\fill[color=red!60, fill=red!5, very thick]  (A1)--(B1)--(C1)--(D1)--cycle;
			%\draw 
			%(1,0,6) coordinate (A2)--(5,0,6) coordinate (B2)--(5,0,0) coordinate (C2)--(1,0,0) coordinate (D1)--cycle;
			%\fill[color=red!60, fill=red!5, very thick]  (A2)--(B2)--(C2)--(D1)--cycle;
			
			%\draw 
			%(5,0,6) coordinate (A3)--(5,6,6) coordinate (B3)--(5,6,0) coordinate (C3)-- (5,0,0) coordinate (D3)--cycle;
			%\fill[color=red!60, fill=red!5, very thick]  (A3)--(B3)--(C3)--(D3)--cycle;
			
			%\draw 
			%(1,6,6) coordinate (A4) -- (5,6,6) coordinate (B4)-- (5,6,0) coordinate (C4)--(1,6,0) coordinate (D4)--cycle;
			%\fill[color=red!60, fill=red!5, very thick]  (A4)--(B4)--(C4)--(D4)--cycle;
			
			%\draw 
			%(1,0,6) coordinate (A5)--(5,0,6) coordinate (B5)--(5,6,6) coordinate (C5)-- (1,6,6) coordinate (D5)--cycle; 
			%\fill[color=red!60, fill=red!5, very thick]  (A5)--(B5)--(C5)--(D5)--cycle;

			\draw[dashed](0,0,0)--(6,0,0);
			\draw[dashed,black](0,0,0)--(0,6,0);
			\draw[dashed,black](0,0,0)--(0,0,6);
			\draw[-,black](0,0,6)--(6,0,6);
			\draw[-,black](6,0,6)--(6,0,0);
			\draw[-,black](6,0,6)--(6,6,6);
			\draw[-,black](6,6,6)--(6,6,0);
			\draw[-,black](6,6,0)--(6,0,0);
			\draw[-,black](6,6,0)--(0,6,0);
			\draw[-,black](6,6,6)--(0,6,6);
			\draw[-,black](0,6,6)--(0,6,0);
			\draw[-,black](0,0,6)--(0,6,6);
			\draw[dashed,blue](2,0,6)--(4,0,6);
			\draw[dashed,blue](2,0,6)--(2,0,0);
			\draw[dashed,blue](4,0,6)--(4,0,0);
			\draw[dashed,blue](4,0,6)--(4,0,0);
			\draw[dashed,blue](4,0,0)--(4,6,0);
			\draw[dashed,blue](5,6,0)--(2,6,0);
			\draw[dashed,blue](2,6,0)--(2,0,0);
			\draw[-,blue](2,6,0)--(2,6,6);
			\draw[-,blue](2,6,6)--(4,6,6);
			\draw[-,blue](4,6,6)--(4,6,0);
			\draw[-,blue](2,6,6)--(2,0,6);
			\draw[-,blue](4,6,6)--(4,0,6);

				\draw [color=blue,decorate,decoration={brace,amplitude=10pt,mirror,raise=4pt},yshift=0pt]
			(2,0,6) -- (4,0,6) ;
			\node[below] at (3.5,0,7.2){\color{blue}$\text{supp}\, c \times \overline{\omega}$};
			
				\draw [color=red,decorate,decoration={brace,amplitude=10pt,mirror,raise=4pt},yshift=0pt]
			(1.8,0,8) -- (5.8,0,8) ;
			\node[below] at (4,0,9.1){\color{red}$\text{supp}\, b \times \overline{\omega}$};

			%\filldraw[color=red!60, fill=red!5, very thick] (2,0,6) shape (2,6,0);
		\end{tikzpicture}
	\end{center}
	\caption{Locally coupled wave equations with indirect localized viscous damping on a cube.}\label{Fig2}
\end{figure}
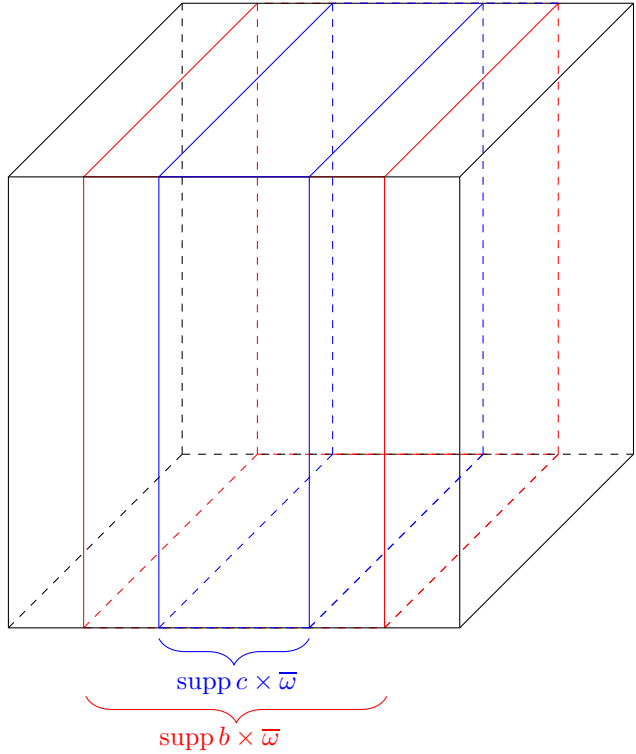
%%%%%%%%%%%%%%%%%%%%%%%%%%%%%%%%%%%%%%%%
%\newpage
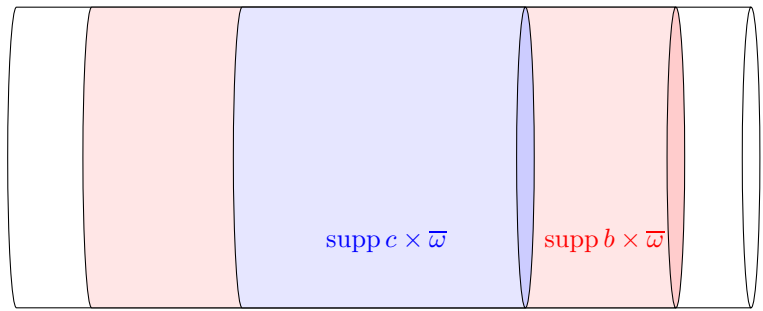
\begin{figure}[h!]
\begin{tikzpicture}
\node[cylinder, 
    draw = , 
    text = ,
    cylinder uses custom fill, 
   % cylinder body fill = , 
    %cylinder end fill = ,
    minimum width = 4cm,
    minimum height = 10cm] (c) at (0,0) {};
\node[cylinder, 
    draw = , 
    text = ,
    cylinder uses custom fill, 
    cylinder body fill = red!10, 
    cylinder end fill = red!20,
    minimum width = 4cm,
    minimum height = 8cm] (c) at (0,0) {};
\node[cylinder, 
    draw = , 
    text = ,
    cylinder uses custom fill, 
    cylinder body fill = blue!10, 
    cylinder end fill = blue!20,
    minimum width = 4cm,
    minimum height = 4cm] (c) at (0,0) {};

    	\node[blue,above] at (0.5,-1,1){\scalebox{1}{$\text{supp}\, c \times \overline{\omega}$}};
    		\node[red,above] at (3.4,-1,1){\scalebox{1}{$\text{supp}\, b \times \overline{\omega}$}};
 \end{tikzpicture}
\caption{Locally coupled wave equations with localized viscous damping on a cylinder.}\label{Fig3}   
\end{figure}
\section{Conclusion and open problems}
\noindent In this work, the local stabilization of  N-dimensional locally coupled wave equations on cylindrical and non regular domains is considered. The localized damping and coupling regions do not satisfy the geometric control condition ${\rm (GCC)}$. Based on the frequency domain approach with the orthonormal basis decomposition and specific multiplier techniques, we have proved a polynomial energy decay rate that depends on the speed wave propagation for indirect stabilization. For direct stability, we established a polynomial energy decay rate of order $t^{-1}$. {\color{black}The case where the coupling region is included in the damping region and both regions do not hit  the boundary  is still  an open problem (see Figure \ref{Fig5} for an illustration)}. Moreover, the case $0<\alpha_1=\alpha_2<\alpha_4<\alpha_3<1$  in \eqref{CASE2}   is also an open problem (see system (A.1) in \cite{
Wehbe2021} for the 1-dimensional case).
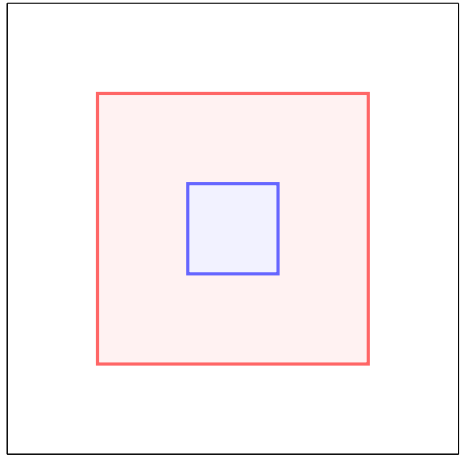
\begin{figure}[h!]
	\begin{center}
		\begin{tikzpicture}
			\draw[-](0,0)--(6,0);
			\draw[-](0,0)--(0,6);
			
			\draw[-,black](0,0)--(0,6);		
			\draw[-,black](0,0)--(6,0);
			\draw[-,black](6,0)--(6,6);
			\draw[-,black](0,6)--(6,6);				
			\draw[-,blue](2.4,2.4)--(2.4,3.6);		
			\draw[-,blue](2.4,3.6)--(3.6,3.6);	
			\draw[-,blue](3.6,2.4)--(3.6,3.6);		
			\draw[-,blue](2.4,2.4)--(3.6,2.4);		
			\draw[-,red](1.2,1.2)--(1.2,4.8);		
			\draw[-,red](1.2,4.8)--(4.8,4.8);	
			\draw[-,red](4.8,1.2)--(4.8,4.8);		
			\draw[-,red](1.2,1.2)--(4.8,1.2);
			\filldraw[color=red!60, fill=red!5, very thick] (1.2,1.2) rectangle (4.8,4.8);	
			\filldraw[color=blue!60, fill=blue!5, very thick] (2.4,2.4) rectangle (3.6,3.6);
	%		\node[blue] at (3,3){\scalebox{0.75}{$\omega_c$}};
	%		\node[red] at (4,3){\scalebox{0.75}{$\omega_b$}};
		\end{tikzpicture}
	\end{center}
	\caption{{\color{black}The case where the coupling region (blue part) is included in the damping region (red part) and both regions are far away from the boundary.}}\label{Fig5}
\end{figure} 

%\protect\bibliographystyle{abbrv}
%\protect\bibliographystyle{alpha}
%\bibliography{Referencesp3}

\end{document}